\documentclass{article}

\usepackage{amssymb}
\usepackage{amsmath}
\usepackage{amsthm}
\usepackage{fullpage}
\theoremstyle{plain}
\newtheorem{theorem}{Theorem}
\newtheorem{proposition}{Proposition}
\newtheorem{lemma}{Lemma}
\newtheorem{corollary}{Corollary}
\newtheorem{conjecture}{Conjecture}
\theoremstyle{definition}
\newtheorem{definition}{Definition}
\theoremstyle{remark}
\newtheorem{remark}{Remark}
\newtheorem{example}{Example}

\begin{document}

\title{On affine maps on non-compact convex sets and some characterizations of finite-dimensional solid ellipsoids}
\author{Gen Kimura\footnote{College of Systems Engineering and Science, Shibaura Institute of Technology, 3-9-14 Shibaura, Minato-ku, Tokyo 108-8548, Japan (gen[at]sic.shibaura-it.ac.jp)}\quad and\quad Koji Nuida\footnote{Research Institute for Secure Systems (RISEC), National Institute of Advanced Industrial Science and Technology (AIST), AIST Tsukuba Central 2, 1-1-1 Umezono, Tsukuba, Ibaraki 305-8568, Japan (k.nuida[at]aist.go.jp)}\ \footnote{Corresponding author}}

\maketitle

\begin{abstract}
Convex geometry has recently attracted great attention as a framework to formulate general probabilistic theories.
In this framework, convex sets and affine maps represent the state spaces of physical systems and the possible dynamics, respectively.
In the first part of this paper, we present a result on separation of simplices and balls (up to affine equivalence) among all compact convex sets in two- and three-dimensional Euclidean spaces, which focuses on the set of extreme points and the action of affine transformations on it.
Regarding the above-mentioned axiomatization of quantum physics, our result corresponds to the case of simplest ($2$-level) quantum system.
We also discuss a possible extension to higher dimensions.
In the second part, towards generalizations of the framework of general probabilistic theories and several existing results including ones in the first part from the case of compact and finite-dimensional physical systems as in most of the literatures to more general cases, we study some fundamental properties of convex sets and affine maps that are relevant to the above subject.\\
\ \\
\emph{Keywords:} Convex set; ellipsoid; vertex-transitivity; 2-level quantum system
\end{abstract}

\section{Introduction}
\label{sec:introduction}

\subsection{Backgrounds and our contributions}
\label{subsec:introduction_summary}

Convexity is a ubiquitous notion in mathematics, frequently appearing not only in geometry but also in other various research areas, including many applications to outside mathematics (see e.g., \cite{Mat_book,Stu_book} and references therein).
Among them, an interesting study of convexity has emerged in the foundations of quantum mechanics.
These activities aim at interpreting quantum physics as an instance of more general physical theories (called e.g., \lq\lq general probabilistic theories''), the latter being axiomatized from operational viewpoints, using \lq\lq (physical) states'' and \lq\lq measurements'' as the basic notions.
Here, probabilistic mixture of states are formalized as convex combination of states, therefore the notion of convexity is essential in those studies.
A motivation of studying such general theories is to establish a unified theoretical framework to describe quantum (and classical) physics together with its possible variants or generalizations.
Potential applications of such activities would include cryptography with long-standing security; if one wants to estimate the security of present cryptographic schemes against attacks using physical devices in the next 100 years, where the present quantum physics may be improved by some advanced theory, then such an observation of general physical theories may be of some help.
Another motivation is to characterize quantum physics among such general theories, giving a re-axiomatization of quantum physics based on physical principles, which is expected to be more physically intuitive than von Neumann's original axiom based (mysteriously) on Hilbert spaces.
For more introduction to this subject from physical viewpoints and several preceding works, see e.g., \cite{Ara_book,Gud_book,Hol_book,KN,Mac_book} and references therein.

A common philosophy in the above-mentioned studies of general physical theories can be understood as follows: We put mathematical assumptions that are essential (or inevitable) from physical viewpoints, while the quantity of other \lq\lq technical'' assumptions should be as small as possible.
To formulate the state space (the set of physical states) of such a general physical theory, the state space is conventionally assumed to be a convex set, reflecting the above-mentioned requirement that any probabilistic mixture of two states should also be a state.

In the first part of this paper, we investigate finite-dimensional compact state spaces (convex sets) equipped with symmetry and some additional special properties.
In the theory of convex polytopes, the notion of symmetry (precisely, vertex-transitivity of affine isometric transformation groups) has played one of the most significant roles and such symmetric convex polytopes have been intensively studied (e.g., \cite{Bab}).
Here, as usual, we say that a group $G$ {\em acts transitively} on a set $X$, if for any $x_1,x_2 \in X$, we have $g \cdot x_1 = x_2$ for some element $g \in G$.
In the studies of general physical theories, the symmetry property \cite{Dav} can also be considered as one of physical principles which can be interpreted as a possibility of reversible transformation between pure states \cite{DB,Hardy,Masanes} or as a physical equivalence of pure states \cite{KNI,KN}. 
In this context, the characterization of a state space under the hypothesis of symmetry becomes an important subject.
In this paper, we give the following characterizations on $2$- and $3$-dimensional compact convex sets with the symmetry property:
\begin{theorem}
\label{thm:2-dimensional_symmetric}
Let $\mathcal{S}$ be a compact convex subset of a Euclidean space with $2$-dimensional affine hull; $\mathrm{Aff}(\mathcal{S}) = \mathbb{R}^2$.
Then the group of bijective affine transformations of $\mathcal{S}$ acts transitively on the set $\mathcal{S}_{\mathrm{ext}}$ of extreme points of $\mathcal{S}$ (see above for the terminology) if and only if $\mathcal{S}$ is affine isomorphic to one of the following two kinds of objects:
\begin{enumerate}
\item A symmetric (or vertex-transitive) convex polygon.
\item The ($2$-dimensional) unit disk.
\end{enumerate}
\end{theorem}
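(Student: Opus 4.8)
The plan is to prove the two implications separately, with the \emph{if} direction routine and the \emph{only if} direction carrying the substance. For the \emph{if} direction, a vertex-transitive convex polygon has by definition an affine symmetry group acting transitively on its vertices, which are exactly its extreme points; and for the unit disk the rotations $SO(2)$ are affine bijections of $\mathcal{S}$ acting transitively on the bounding circle $\mathcal{S}_{\mathrm{ext}}$. So assume for the converse that $G$, the group of bijective affine self-maps of $\mathcal{S}$, acts transitively on $\mathcal{S}_{\mathrm{ext}}$. Since $\mathcal{S}$ is compact convex with $\mathrm{Aff}(\mathcal{S}) = \mathbb{R}^2$, it is a genuine convex body and, by the Minkowski/Krein--Milman theorem, $\mathcal{S} = \mathrm{conv}(\mathcal{S}_{\mathrm{ext}})$ with $\mathcal{S}_{\mathrm{ext}} \neq \emptyset$.

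My first substantive step is to normalize the action using an affine invariant. Every convex body has a unique inscribed ellipse of maximal area (its John ellipse) $E$, and since each $g \in G$ is an affine bijection with $g(\mathcal{S}) = \mathcal{S}$, uniqueness forces $g(E) = E$; thus $E$ is $G$-invariant. Applying the affine transformation carrying $E$ to the unit disk $\bar{D}$ (which alters $\mathcal{S}$ only up to affine isomorphism and conjugates $G$ accordingly), I may assume $E = \bar{D} \subseteq \mathcal{S}$. Now an affine bijection preserving $\bar{D}$ must fix its centroid, hence be linear, and a linear bijection preserving the unit ball is orthogonal; therefore $G \subseteq O(2)$. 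In particular every $g \in G$ preserves the Euclidean norm, so transitivity forces all points of $\mathcal{S}_{\mathrm{ext}}$ to share a common norm $r$, i.e.\ $\mathcal{S}_{\mathrm{ext}}$ lies on the circle $C_r$ of radius $r$.

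I then split on the cardinality of $\mathcal{S}_{\mathrm{ext}}$. If it is finite, then $\mathcal{S}$ is the convex hull of finitely many points, i.e.\ a convex polygon whose vertices are precisely $\mathcal{S}_{\mathrm{ext}}$, on which $G \subseteq O(2)$ acts transitively by isometries; this is exactly the definition of a symmetric convex polygon, giving case~(1). The infinite case is where I expect the main obstacle to lie. Here $G$ must be infinite, since a finite group cannot act transitively on an infinite set, so its rotation subgroup $G \cap SO(2)$ (of index at most two in $G$) is an infinite subgroup of $SO(2) \cong \mathbb{R}/\mathbb{Z}$ and is therefore dense, the only proper closed subgroups of the circle group being finite. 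Consequently the $(G \cap SO(2))$-orbit of any fixed extreme point is dense in $C_r$; as this orbit lies in $\mathcal{S}$ and $\mathcal{S}$ is closed, we obtain $C_r \subseteq \mathcal{S}$, whence $\mathrm{conv}(C_r) = \bar{D}_r \subseteq \mathcal{S}$. Combined with $\mathcal{S} = \mathrm{conv}(\mathcal{S}_{\mathrm{ext}}) \subseteq \bar{D}_r$ this yields $\mathcal{S} = \bar{D}_r$, which is affine isomorphic to the unit disk, giving case~(2). The delicate points I must handle carefully are the uniqueness and $G$-invariance of the John ellipse, which legitimize the reduction to $O(2)$, and the density dichotomy for subgroups of $SO(2)$, which is what converts mere infinitude of $\mathcal{S}_{\mathrm{ext}}$ into the full circle and hence the solid disk.
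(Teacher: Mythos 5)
Your proof is correct, but it follows a genuinely different route from the paper's on both of its substantive steps. For the reduction to orthogonal symmetries, the paper first produces a fixed interior point of the $\mathrm{Aut}(\mathcal{S})$-action via the Haar integral (Davies' lemma, reproved in the Appendix, plus Lemma \ref{lem:maximally_mixed_is_interior}) and then averages an inner product over the compact group $\mathrm{Aut}(\mathcal{S})$ to get a $G$-invariant one (Proposition \ref{prop:isometry}); you instead invoke uniqueness of the maximal-area inscribed (John) ellipse, which is $G$-invariant and can be normalized to the unit disk --- an alternative the paper itself acknowledges in Remark \ref{rem:Danzer} via Danzer--Laugwitz--Lenz, so this substitution is legitimate (note only that you should record why $\lvert\det g\rvert = 1$, which follows from $g(\mathcal{S})=\mathcal{S}$). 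The larger divergence is in the infinite case: the paper's engine is Lemma \ref{lem:infinite_order_exist}, which shows that an infinite $\mathrm{Aut}(\mathcal{S})$ contains an element of infinite order, and whose proof requires Schur's theorem on periodic linear groups, the Kargapolov--Hall--Kulatilaka theorem on locally finite groups, and compactness of $\mathrm{Aut}(\mathcal{S})$ (Proposition \ref{prop:A_is_compact}); the resulting irrational rotation gives a dense orbit in a circle, and the paper then uses compactness of $\mathcal{S}_{\mathrm{ext}}$ (Proposition \ref{prop:pure_states_symmetric_GPT}) to conclude the circle lies in $\mathcal{S}_{\mathrm{ext}}$. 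You bypass all of this with the elementary dichotomy that a proper closed subgroup of $SO(2)$ is finite, so the infinite group $G \cap SO(2)$ is dense, and you close the argument with the sandwich $\overline{D}_r = \mathrm{conv}(C_r) \subset \mathcal{S} = \mathrm{conv}(\mathcal{S}_{\mathrm{ext}}) \subset \overline{D}_r$, needing only closedness of $\mathcal{S}$ rather than closedness of $\mathcal{S}_{\mathrm{ext}}$. What your approach buys is a short, self-contained two-dimensional proof with no Haar measure, no Burnside-type group theory, and no topology on the space of affine maps; what the paper's heavier apparatus buys is dimension-independence --- Lemma \ref{lem:infinite_order_exist} and Proposition \ref{prop:isometry} are stated for arbitrary finite dimension precisely because they are reused verbatim in the proof of the three-dimensional Theorem \ref{thm:3-dimensional_symmetric}, where the classification of closed subgroups is no longer a one-line argument.
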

\begin{theorem}
\label{thm:3-dimensional_symmetric}
Let $\mathcal{S}$ be a compact convex subset of a Euclidean space with $3$-dimensional affine hull; $\mathrm{Aff}(\mathcal{S}) = \mathbb{R}^3$.
Then the group of bijective affine transformations of $\mathcal{S}$ acts transitively on the set $\mathcal{S}_{\mathrm{ext}}$ of extreme points of $\mathcal{S}$ if and only if $\mathcal{S}$ is affine isomorphic to one of the following three kinds of objects:
\begin{enumerate}
\item A $3$-dimensional symmetric (or vertex-transitive) convex polytope.
\item A $3$-dimensional circular cylinder $\{{}^t(x,y,z) \in \mathbb{R}^3 \mid x^2 + y^2 \leq 1\,,\, 0 \leq z \leq 1\}$.
\item The $3$-dimensional unit ball.
\end{enumerate}
\end{theorem}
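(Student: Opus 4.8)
The plan is to treat the two implications separately, with essentially all of the work in the \textbf{only if} direction. For the \textbf{if} direction I would simply check transitivity for each listed object: for a vertex-transitive polytope this is the definition, its extreme points being exactly its vertices; for the cylinder the extreme points are the top and bottom boundary circles, and the group generated by the rotations about the $z$-axis together with the reflection $(x,y,z)\mapsto(x,y,1-z)$ acts transitively on their union; for the ball the extreme points form the unit sphere, on which $SO(3)$ acts transitively. For the converse, write $G$ for the group of bijective affine transformations of $\mathcal{S}$ and assume $G$ acts transitively on $\mathcal{S}_{\mathrm{ext}}$. The decisive first step is a normalization using the L\"owner--John ellipsoid: since $\mathrm{Aff}(\mathcal{S})=\mathbb{R}^3$, $\mathcal{S}$ is a convex body possessing a \emph{unique} minimal-volume circumscribed ellipsoid $E$, and by uniqueness every element of $G$ maps $E$ to itself. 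Applying an affine isomorphism $\Phi$ carrying $E$ to the unit ball $B$ centered at the origin, I may replace $\mathcal{S}$ by $\Phi(\mathcal{S})$ and $G$ by $\Phi G\Phi^{-1}$; as the affine maps preserving $B$ are exactly the orthogonal ones, after this reduction $G$ is a subgroup of $O(3)$ fixing the origin, and it is closed, hence compact, because a uniform limit of affine self-maps of the compact set $\mathcal{S}$ is again one.

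The payoff is immediate. Orthogonal maps preserve the norm, so all points of the single orbit $\mathcal{S}_{\mathrm{ext}}=G\cdot p$ lie on one sphere $S_r$ of radius $r$ about the origin, and this orbit, being the continuous image of the compact group $G$, is itself compact. By the Krein--Milman theorem $\mathcal{S}=\mathrm{conv}(\mathcal{S}_{\mathrm{ext}})$, so it remains only to classify the compact orbits of closed subgroups of $O(3)$ on $S^2$ and to identify the convex hulls they produce. I would organize this by the identity component $G_0$, which, being a connected closed subgroup of $SO(3)$, is one of $\{1\}$, a one-parameter rotation group $SO(2)$ about some axis, or all of $SO(3)$.

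These three cases match the three conclusions. If $G_0=\{1\}$ then $G$ is finite, $\mathcal{S}_{\mathrm{ext}}$ is a finite point set, and $\mathcal{S}$ is a polytope on which the isometry group $G$ acts transitively on vertices, i.e.\ a symmetric polytope (type 1). If $G_0=SO(3)$ then $G_0$ is already transitive on $S_r$, so $\mathcal{S}_{\mathrm{ext}}=S_r$ and $\mathcal{S}$ is a ball (type 3). The intermediate case $G_0=SO(2)$ about, say, the $z$-axis is the delicate one: the $G_0$-orbits on $S_r$ are the two poles and the latitude circles, and $\mathcal{S}_{\mathrm{ext}}$ is a $G$-invariant union of these. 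Since $G_0$ is normal in $G$, every element of $G$ normalizes it and hence preserves the axis, so $G/G_0$ can permute latitude circles only through $z\mapsto -z$; an orbit of circles therefore has at most two members, placed symmetrically at heights $\pm z_0$ with equal radius and common axis. A single circle, or a pair of poles, spans only a plane or a line, contradicting $3$-dimensionality; hence $\mathcal{S}_{\mathrm{ext}}$ consists of two congruent coaxial circles, whose convex hull is a right circular cylinder (type 2). Finally I would note that each reduced object is affine-isomorphic to the normalized form in the statement, and this transfers back to the original $\mathcal{S}$ through $\Phi$.

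The main obstacle, and the step deserving the most care, is the $G_0=SO(2)$ analysis: one must exclude every orbit configuration except two symmetric circles (via the normalizer/axis-preservation argument together with the $3$-dimensionality hypothesis, including ruling out mixtures of poles with circles since $G_0$ fixes the poles but moves circle points) and then verify that the convex hull of two congruent coaxial circles is genuinely a cylinder having exactly those circles as extreme points. By contrast, the reduction to $O(3)$ through uniqueness of the L\"owner--John ellipsoid is the conceptually enabling move but is technically routine once uniqueness is invoked.
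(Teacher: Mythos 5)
Your proof is correct, but it follows a genuinely different route from the paper's. The normalization step is morally the same: you conjugate $G=\mathrm{Aut}(\mathcal{S})$ into $O(3)$ via uniqueness of the L\"owner--John ellipsoid, which is exactly the alternative (due to Danzer--Laugwitz--Lenz) that the paper acknowledges in Remark \ref{rem:Danzer}; the paper instead obtains a fixed point and a $G$-invariant inner product from the Haar integral (Proposition \ref{prop:isometry}). After that the arguments diverge. You classify the identity component $G_0$ of the compact group $G\subset O(3)$ as $\{1\}$, $SO(2)$ or $SO(3)$ and read off the three conclusions case by case; this leans on compact Lie group structure theory (Cartan's closed-subgroup theorem, the classification of connected closed subgroups of $SO(3)$, finiteness of the component group). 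The paper avoids Lie theory entirely: its technical core is Lemma \ref{lem:infinite_order_exist}, asserting that an infinite $\mathrm{Aut}(\mathcal{S})$ contains an element of infinite order, proved via Schur's theorem on periodic linear groups, the Kargapolov and Hall--Kulatilaka theorem on locally finite groups, and a compactness limit argument; this yields a single irrational rotation, and the geometry is then finished by slicing $\mathcal{S}$ perpendicular to its axis, showing each slice of $\mathcal{S}_{\mathrm{ext}}$ is a circle, and splitting according to whether extreme points occur only at the two extreme heights (cylinder if the radii agree, a contradiction otherwise) or also at an intermediate height (ball). Your route is shorter and more transparent in dimension $3$ and gives the stronger by-product of identifying the full symmetry group, whereas the paper's group-theoretic input is dimension-independent (only the final slicing uses $n\le 3$), in line with its interest in higher-dimensional extensions (Conjecture \ref{conj:characterization_finite_dim}). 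One point you should make explicit: $g(E)=E$ does not follow from uniqueness alone, since affine maps rescale volume; you need the standard sandwich $\mathrm{vol}(g(E))=\lvert\det g\rvert\,\mathrm{vol}(E)$ applied to both $g$ and $g^{-1}$ to force $\lvert\det g\rvert=1$ first.
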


Here we give a remark on a related work: 
In a preceding paper \cite{Dav}, Davies also studied the finite-dimensional compact convex sets $\mathcal{S}$ whose groups $G$ of bijective affine transformations act transitively on the sets of extreme points, and presented a classification of the convex sets $\mathcal{S}$ such that its group $G$ of symmetry as above is equal to a given compact group, in terms of classification of finite-dimensional subspaces of the left regular representation of the given compact group.
However, in that work any relation between the symmetric convex sets with {\em different} groups of symmetry, and also the concrete shape of a symmetric convex set induced by each subspace of the regular representation, have not been clarified. 
Therefore, to classify {\em all} symmetric convex sets based on that result, it is basically required to determine the concrete shape of {\em every} convex set constructed from some representation space of {\em every} compact group.
On the other hand, our results above aimed at determining the symmetric convex sets without any restriction for the groups of symmetry. 

Next, we investigate another kind of physically motivated hypothesis, called the {\em spectrality} of state spaces.
First, we remind the definition of distinguishability of states in a single shot measurement (see, e.g., \cite{KNI} for its motivation from physical viewpoints):
\begin{definition}
\label{defn:distinguishability}
We say that points $s_1,s_2,\dots,s_n$ of a convex subset $\mathcal{S}$ of a real vector space are {\em distinguishable} if there exists a collection $(e_i)_{i=1}^{n}$ of $n$ affine functionals $e_i:\mathcal{S} \to \mathbb{R}$ such that $e_i \geq 0$, $\sum_{i=1}^{n} e_i = 1$ and $e_i(s_i) = 1$ for every $1 \leq i \leq n$.
\end{definition}
Note that any non-empty subset of a set of distinguishable points is also distinguishable.
Roughly speaking, if a convex set $\mathcal{S}$ has $n$ distinguishable points $s_1,\dots,s_n$, then \lq\lq lossless'' encoding of any $n$-bit information into a point of $\mathcal{S}$ is (in principle) possible by using these $n$ points.
(A geometric interpretation of this definition will be supplied in Lemma \ref{lem:distinguishable_states} below.)
Then we introduce the following definition:
\begin{definition}
[{see e.g., \cite{AS,IS}}]
\label{defn:distinguishably_decomposable}
Let $\mathcal{S}$ be a convex subset of a real vector space.
We say that $\mathcal{S}$ has {\em spectrality} (or, as in \cite{KN}, $\mathcal{S}$ is {\em distinguishably decomposable}) if each $s \in \mathcal{S}$ admits a decomposition $s = \sum_{j=1}^{\ell} \lambda_j s_j$ ($1 \leq \ell < \infty$) into distinguishable extreme points $s_1,\dots,s_{\ell} \in \mathcal{S}_{\mathrm{ext}}$ such that $\sum_{j=1}^{\ell} \lambda_j = 1$ and $\lambda_j \geq 0$ for every $j$.
Moreover, if the number $\ell$ of distinguishable extreme points in each decomposition is bounded above by $k$, then we say that $\mathcal{S}$ has {\em $k$-spectrality}.
\end{definition}
In general physical theories, the spectrality of state spaces can also be one of the physical principles which can be interpreted as the possibility of state preparation with a probabilistic mixtures of distinguishable pure states \cite{KN}.
Here we emphasize that the distinguishable points appearing in Definition \ref{defn:distinguishably_decomposable} should be {\em extreme points}, which rules out, e.g., the square and any regular polygon since a generic point in these convex sets cannot be decomposed into extreme points (even though such a point can be decomposed into boundary points; see Corollary \ref{cor:distinguishably_decomposable_simplex}).
By using this notion, in this paper we give the following enhancement of the above theorems, the latter of which separates (up to affine equivalence) the $3$-simplex and the $3$-dimensional ball among arbitrary $3$-dimensional convex sets:
\begin{theorem}
\label{thm:2-dimensional_strongly_symmetric}
Let $\mathcal{S}$ be a compact convex subset of a Euclidean space with $\mathrm{Aff}(\mathcal{S}) = \mathbb{R}^2$.
Then the following two conditions are equivalent:
\begin{enumerate}
\item The set $\mathcal{S}$ is affine isomorphic to either a triangle (i.e., $2$-simplex) or the unit disk.
\item The group of bijective affine transformations of $\mathcal{S}$ acts transitively on $\mathcal{S}_{\mathrm{ext}}$, and $\mathcal{S}$ has spectrality.
\end{enumerate}
\end{theorem}
\begin{theorem}
\label{thm:3-dimensional_strongly_symmetric}
Let $\mathcal{S}$ be a compact convex subset of a Euclidean space with $\mathrm{Aff}(\mathcal{S}) = \mathbb{R}^3$.
Then the following two conditions are equivalent:
\begin{enumerate}
\item The set $\mathcal{S}$ is affine isomorphic to either a tetrahedron (i.e., $3$-simplex) or the unit ball.
\item The group of bijective affine transformations of $\mathcal{S}$ acts transitively on $\mathcal{S}_{\mathrm{ext}}$, and $\mathcal{S}$ has spectrality.
\end{enumerate}
\end{theorem}
Note that a classical probability theory is characterized by a simplex state space, while in $2$-level quantum systems the state space is affinely isomorphic to a unit ball (called the Bloch ball; see e.g., \cite{Kim_book,NC_book}).
Thus, the result is physically important since we have shown that in $3$-dimensional state space the physical theories are restricted to be either classical or quantum under the two physical principles of symmetricity and spectrality \cite{KN}.
(See \cite{CAP_preprint,DB,Hardy,Masanes} for another characterizations of the Bloch ball.)
Here we emphasize that the dimension of a state space $\mathcal{S}$ is also (in principle) operationally determined.
Indeed, the dimension of $\mathcal{S}$ is the minimum number of two-outcome measurements that are sufficient to identify a state in $\mathcal{S}$ uniquely from the outcome probabilities (cf., \cite{CAP_preprint}).
Hence, the restriction of dimensions of the state spaces in our results can also be regarded as a possible \lq\lq physical principle''.

For higher (finite) dimensional cases, we also give the following result:
\begin{theorem}
\label{thm:characterization_finite_dim}
Let $\mathcal{S}$ be a finite-dimensional compact convex set with $\mathrm{Aff}(\mathcal{S}) = \mathbb{R}^n$, $n < \infty$.
Let $G$ denote the group of bijective affine transformations of $\mathcal{S}$.
Then $\mathcal{S}$ is affine isomorphic to the $n$-dimensional unit ball if and only if the following two conditions are satisfied:
\begin{enumerate}
\item The diagonal action of $G$ on the set of pairs $(s_1,s_2)$ of distinguishable extreme points $s_1,s_2 \in \mathcal{S}_{\mathrm{ext}}$ is transitive.
\item The set $\mathcal{S}$ has $2$-spectrality.
\end{enumerate}
\end{theorem}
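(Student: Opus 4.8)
The plan is to dispatch the easy implication first and then concentrate on reconstructing the ball from the three conditions. For the ``only if'' direction, when $\mathcal{S}$ is the unit ball $B^n$ one checks directly that its affine automorphism group is $O(n)$: an affine automorphism fixes the centroid $0$ hence is linear, and a linear automorphism of $B^n$ is orthogonal. Transitivity on $\mathcal{S}_{\mathrm{ext}} = S^{n-1}$ (condition 1) is then immediate, and a short computation with affine functionals $e(x) = \langle a,x\rangle + c$ satisfying $0 \le e \le 1$ on $B^n$ shows that two boundary points are distinguishable (Definition \ref{defn:distinguishability}) exactly when they are antipodal, whence $O(n)$ acts transitively on such pairs (condition 2) and every radius exhibits the $2$-decomposition of each point of $B^n$ into an antipodal pair (condition 3).

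For the substantive ``if'' direction, I would first normalize the geometry. The centroid of $\mathcal{S}$ is affinely natural, so it is fixed by $G$; translating it to the origin makes every element of $G$ linear. Since $\mathcal{S}$ is compact with nonempty interior and $0$ is interior, $G$ is a bounded, closed subgroup of $GL(n,\mathbb{R})$, hence compact, and averaging an arbitrary inner product over Haar measure yields a $G$-invariant inner product; with respect to it $G \subseteq O(n)$. Because $G$ is orthogonal and (condition 1) transitive on $\mathcal{S}_{\mathrm{ext}}$, all extreme points share a common norm, which I rescale to $1$; thus $\mathcal{S}_{\mathrm{ext}} \subseteq S^{n-1}$ and $\mathcal{S} = \mathrm{conv}(\mathcal{S}_{\mathrm{ext}}) \subseteq B^n$ by Krein--Milman.

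The heart of the argument is to identify distinguishability of extreme points with antipodality. Applying distinguishable $2$-decomposability (condition 3 and Definition \ref{defn:distinguishably_decomposable}) to the origin gives $0 = \lambda s_1 + (1-\lambda)s_2$ with $s_1, s_2 \in \mathcal{S}_{\mathrm{ext}}$ distinguishable; taking norms forces $\lambda = 1/2$ and $s_2 = -s_1$, so there exists at least one distinguishable antipodal pair $(u,-u)$. Now condition 2 together with linearity upgrades this to a global statement: any distinguishable pair $(s_1,s_2)$ equals $g\cdot(u,-u) = (gu,-gu)$ for some $g \in G$, hence $s_2 = -s_1$. Moreover, for an arbitrary extreme point $s$, condition 1 provides $g \in G$ with $gu = s$, and then $(s,-s) = g\cdot(u,-u)$ is again distinguishable (an affine automorphism carries the defining functionals of Definition \ref{defn:distinguishability} to admissible ones), so $-s \in \mathcal{S}_{\mathrm{ext}}$ and $\mathcal{S}_{\mathrm{ext}}$ is centrally symmetric.

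Finally I would collapse $\mathcal{S}$ onto its diameters. For any $s_0 \in \mathcal{S}$, condition 3 writes $s_0 = \lambda s_1 + (1-\lambda)s_2$ with $s_1,s_2$ distinguishable extreme points, which are antipodal by the previous step, so $s_0 = (2\lambda-1)s_1$ is a scalar multiple of an extreme point; that is, $\mathcal{S} = \bigcup_{s \in \mathcal{S}_{\mathrm{ext}}} [-s,s]$. Since $0$ is interior, $\mathcal{S}$ contains a small ball, so every direction $v \in S^{n-1}$ is realized as $v = \pm s$ for some extreme point $s$; by the central symmetry just established, $v \in \mathcal{S}_{\mathrm{ext}}$ in either case. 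Hence $\mathcal{S}_{\mathrm{ext}} = S^{n-1}$ and $\mathcal{S} = \mathrm{conv}(S^{n-1}) = B^n$. I expect the main obstacle to be the pivotal identification ``distinguishable extreme points are antipodal'': extracting one antipodal distinguishable pair from the barycenter and then leveraging the diagonal transitivity of condition 2 under linearity is the crux, after which the $2$-decomposability argument fills the sphere essentially for free.
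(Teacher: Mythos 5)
Your proposal is correct, but its architecture differs from the paper's. The paper proves the ``if'' direction by reduction to its Corollary \ref{cor:boundary_all_extreme} (a compact symmetric convex set is a ball iff every boundary point is extreme): assuming a non-extreme boundary point $s$, it takes a supporting hyperplane $H$ at $s$, notes that the two distinguishable extreme points in the $2$-decomposition of $s$ must lie in $H$, and then uses condition 2 to transport the distinguishable pair decomposing the origin onto that pair, forcing $0 = \mu_1 s_1 + \mu_2 s_2 \in H$ and contradicting interiority of the fixed point (Lemma \ref{lem:maximally_mixed_is_interior}); the orthogonality normalization is needed only inside Corollary \ref{cor:boundary_all_extreme}. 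You instead normalize to $G \subseteq O(n)$ from the start (via the centroid and a Haar-averaged inner product, essentially Proposition \ref{prop:isometry}, with the centroid argument replacing the paper's appendix on fixed points), prove the explicit intermediate statement that distinguishable extreme pairs are exactly antipodal pairs, and then reconstruct $\mathcal{S} = \bigcup_{s \in \mathcal{S}_{\mathrm{ext}}}[-s,s]$ and fill the sphere using interiority of the origin. The shared crux is identical --- decompose the interior fixed point into a distinguishable pair and move it by condition 2, exploiting that every $g \in G$ fixes the origin --- but the payoffs differ: the paper's contradiction with a supporting hyperplane is shorter and needs no metric information in the main argument, while your constructive route isolates the geometrically transparent fact ``distinguishable $\Leftrightarrow$ antipodal'' and identifies $\mathcal{S}_{\mathrm{ext}} = S^{n-1}$ directly, at the cost of carrying the orthogonal normalization through the whole proof. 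One small step you gloss over --- that $G$ is a compact subgroup of $GL(n,\mathbb{R})$ --- is true and standard (it is what the paper establishes through Lemma \ref{lem:invertible_set_closed} and Proposition \ref{prop:A_is_compact}), so this is a presentational shortcut rather than a gap.
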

This result provides a new characterization of finite-dimensional solid ellipsoids among all convex sets in terms of structure of the set of extreme points. 

Moreover, in fact the authors have the following conjecture:
\begin{conjecture}
\label{conj:characterization_finite_dim}
We would be able to weaken the first condition in Theorem \ref{thm:characterization_finite_dim} for the diagonal action of $G$ to the transitivity of $G$ on the extreme points (cf., Lemma \ref{lem:2-transitive_implies_transitive}); the transitivity of $G$ on $\mathcal{S}_{\mathrm{ext}}$ and the $2$-spectrality would characterize the affine isomorphism classes of finite-dimensional unit balls.
\end{conjecture}
By Theorems \ref{thm:2-dimensional_strongly_symmetric} and \ref{thm:3-dimensional_strongly_symmetric}, this conjecture is true for up to $3$-dimensional cases.
A study for a general finite-dimensional case will be a future research topic.

Before explaining our contributions in the second part of the paper, we note that most of the preceding works on the operational treatments of general physical theories adopted the following two assumptions; namely that the state space is finite dimensional and compact \cite{CAP_preprint,DB,Hardy,Masanes}.
Operationally, the assumption of compactness is typically justified \cite{Ara_book} by the fact that physical measurements have a finite accuracy, and therefore, it is natural to assume that the limit point of a sequence of physical states is also a physical state.
However, this does not mean that our world indeed has a compact state space, and thus it is still desirable to describe the general theories without this unnecessary hypothesis (at least from the mathematical point of view). 
The second assumption of the finite dimensionality of state space crucially restrict the theories for the description of physics --- for instance, the state space of an electron is infinite dimensional.
The aim of the second part of this paper is thus to investigate the cases where the compactness or finiteness of dimension is not satisfied (while the part also includes some results on compact and/or finite-dimensional state spaces as special cases). 

Let us give a further explanation of the content of the second part.
In the formulation of physical state spaces using convex sets, the notion of \lq\lq dynamics'' on physical systems can be formulated as affine maps between convex sets in order to preserve the probabilistic mixtures.
To study affine maps between two state spaces, we introduce compact closures of the state spaces and consider affine maps between the compact closures, as each of the former affine maps extends uniquely to some of the latter.
Hence, if we deal with the set of affine maps as a plain set, there is no problem to assume the state spaces to be compact.
However, when we define a topology on the set of affine maps like the compact-open topology according to the above-mentioned common philosophy, we do not want to use information on the boundaries of state spaces, which are introduced by just technical reasons.
From this viewpoint, we introduce a notion of \lq\lq essential'' open or closed subsets of the compact closure of state space, which means (roughly) that the essential shape of the subset is not affected by the existence of the boundary of the state space (see Definition \ref{defn:essential_subset} for the precise definition).
By using the notion of essential subsets, we introduce a topology, which is an analogy of the compact-open topology, on the set of continuous maps between the compact closures, into which the set of affine maps between the state spaces is naturally embedded (see Definition \ref{defn:modified_compact_open_topology}).
This new topology also has desirable properties; for example, the induced topology on the set of affine maps between state spaces is Hausdorff (Proposition \ref{prop:dynamics_Hausdorff}), relatively compact in finite-dimensional cases (Proposition \ref{prop:Atilde_dense_in_A_finite_dim} and Proposition \ref{prop:A_is_compact}), and compatible with natural algebraic operations and natural actions. 
The authors hope that this topology and the notion of essential subsets are not only physically reasonable but also of purely mathematical interest.

\subsection{Related work}
\label{subsec:introduction_related}

Here we compare the present work to some related results.
Hardy \cite{Hardy} characterized the quantum system by five axioms on probabilistic behaviors, minimality of dimension of the state space (called \lq\lq the number of degrees of freedom'' in that paper), properties of subsystems and composite systems, and symmetry of states.
In one direction, the result has more generality than our result, since it deals with general quantum systems.
However, those axioms cannot be considered as physical principles as each of them is not directly testable in experiments.
Moreover, one of the starting points in \cite{Hardy} is that the maximum cardinality, denoted here by $c$, of a set of distinguishable states for such a single system (called \lq\lq the dimension'' in that paper) is assumed to be two (see, in particular, Axiom 2 and Axiom 3 in \cite{Hardy}).
In contrast, our result (Theorem \ref{thm:3-dimensional_strongly_symmetric}) does not introduce any restriction on $c$ at the beginning; a general result (Lemma \ref{lem:maximal_distinguishable}) implies that $c \leq 4$ for a single system in our situation and the case $c = 4$ corresponds to the classical system, while the case $c = 3$ is not excluded in the statement and is successfully ruled out in the proof.
From the viewpoint, our result indeed gives a new insight for characterizing the quantum system.

Similar things are also true for other related results: In a result of Masanes and Mueller \cite{Masanes}, they characterized the quantum system by five requirements which would have similar flavor but are different from those in \cite{Hardy}.
Although their result covers general systems as well as the single one, they also started the argument from an assumption that the quantity $c$ for a single system (called \lq\lq the capacity'' in their paper) is two.
Daki\'{c} and Brukner \cite{DB} also put an axiom on \lq\lq the information carrying capacity'', which means in our terminology here that $c \leq 2$ holds for a single system.
Moreover, the axiom on the existence of an \lq\lq ideal compression scheme'' of each state proposed by Chiribella, D'Ariano and Perinotti in \cite{CAP_preprint} (in particular, the \lq\lq maximally efficient'' condition for the compression scheme), which is different from the previous work \cite{Hardy,Masanes}, also implies (together with the other axioms) a constraint on the possibility of the quantity $c$.
Although these works are more based on physical principles than the one in \cite{Hardy}, we emphasize again that our present result does not put any restriction on $c$ at the beginning of the argument.
We also note that, in the results above, some conditions for the joint systems are needed even to derive a property of the single systems; while our argument does not use joint systems and is closed within the single systems.

\subsection{Notations and terminology}
\label{subsec:notations}

Unless otherwise specified, in this paper any vector space is considered over the real field $\mathbb{R}$, and symbols $\mathcal{S}$, $\mathcal{S}_0$, $\mathcal{S}_1$, $\mathcal{S}_2,\dots$ denote non-empty convex subsets of some vector spaces.
The notation $\mathrm{Aff}(\mathcal{S})$ signifies the affine hull of $\mathcal{S}$.
For any pair of topological spaces $X$ and $Y$, let $C(X,Y)$ denote the set of all continuous maps from $X$ to $Y$.
For any subset $A \subset X$, let $\mathrm{int}_X(A)$ and $\mathrm{cl}_X(A)$ denote the interior and the closure of $A$ relative to $X$, respectively.
When the underlying space $X$ is obvious from the context, we instead write $A^o = \mathrm{int}_X(A)$, $\overline{A} = \mathrm{cl}_X(A)$ and $A^c = X \setminus A$.

\subsection{Organization of the paper}
\label{subsec:organization}

This paper is organized as follows.
In Section \ref{sec:characterization_ellipsoids} we prove the theorems listed in Section \ref{subsec:introduction_summary}; some preliminary observations are provided in Section \ref{subsec:characterization_preliminary}, and the bodies of the proofs are given in Sections \ref{subsec:characterization_finite_dim}--\ref{subsec:characterization_low_dimensional_symmetric}.
On the other hand, towards generalizations of several existing results on general probabilistic theories from the case of compact and finite-dimensional physical systems as in most of the literatures to more general cases, Section \ref{sec:preliminary} summarizes some basic definitions and fundamental properties relevant to convex sets, and introduce some notions for the study of non-compact convex sets.
Then in Section \ref{sec:dynamics_topology}, we study algebraic and topological properties of the sets of affine maps between convex sets; Section \ref{subsec:dynamics_topology_general} deals with general (possibly non-compact and infinite-dimensional) cases and Section \ref{subsec:finite_dimensional} is specialized to finite-dimensional cases.

\section{On characterizations of solid ellipsoids}
\label{sec:characterization_ellipsoids}

In this section, we discuss some characterizing properties of the unit balls up to affine equivalence mentioned in Section \ref{subsec:introduction_summary}.
Throughout this section, we suppose (unless otherwise specified) that $\mathcal{S}$ is a convex, compact and full-dimensional subset of a finite-dimensional Euclidean space denoted by $V(\mathcal{S})$.
Let $\mathcal{S}_{\mathrm{ext}}$ denote the set of extreme points of $\mathcal{S}$, and let $\mathrm{Aut}(\mathcal{S})$ denote the set of bijective affine transformations on $\mathcal{S}$.

\subsection{Preliminary observations}
\label{subsec:characterization_preliminary}

First we note that, in the current setting for $\mathcal{S}$, $\mathrm{Aut}(\mathcal{S})$ is a compact topological group with respect to the function composition and the standard compact-open topology, and its natural action on $\mathcal{S}$ is continuous (these facts can also be derived from the generalized argument in Section \ref{sec:dynamics_topology}).
We also note that the action of $\mathrm{Aut}(\mathcal{S})$ preserves the subset $\mathcal{S}_{\mathrm{ext}}$ of $\mathcal{S}$.
Now we have the following fundamental property:
\begin{proposition}
\label{prop:pure_states_symmetric_GPT}
Suppose that $\mathrm{Aut}(\mathcal{S})$ acts transitively on $\mathcal{S}_{\mathrm{ext}}$.
Then $\mathcal{S}_{\mathrm{ext}}$ is a compact (hence closed) subset of $\mathcal{S}$.
\end{proposition}
\begin{proof}
By the assumption, the $\mathrm{Aut}(\mathcal{S})$-orbit of any given $s_0 \in \mathcal{S}_{\mathrm{ext}}$ coincides with the whole $\mathcal{S}_{\mathrm{ext}}$.
Therefore, the compactness of $\mathcal{S}_{\mathrm{ext}}$ follows from the compactness of $\mathrm{Aut}(\mathcal{S})$ and the continuity of the $\mathrm{Aut}(\mathcal{S})$-action.
\end{proof}
Note that $\mathcal{S}_{\mathrm{ext}}$ is not always a closed subset of $\mathcal{S}$; see e.g., \cite[Section 2.4, Exercise 2]{Gru}.
\begin{remark}
\label{rem:extreme_is_exposed}
We say that a point $s \in \mathcal{S}$ is {\em exposed} if the set $\{s\}$ forms a face of $\mathcal{S}$.
In the current setting, this is equivalent to that there is an affine functional $f:\mathcal{S} \to \mathbb{R}$ such that $f(\mathcal{S})$ is bounded and $s$ is the unique point in $\mathcal{S}$ to attain the maximum value of $f$.
Any exposed point in $\mathcal{S}$ is also an extreme point in $\mathcal{S}$, and the action of $\mathrm{Aut}(\mathcal{S})$ also preserves the set of the exposed points.
On the other hand, when the condition for compactness of $\mathcal{S}$ is relaxed, it does {\em not} hold in general that every extreme point is exposed (see e.g., \cite[Section 2.4, Exercise 2]{Gru}).
However, every extreme point is exposed if $\mathcal{S}_{\mathrm{ext}} \neq \emptyset$ (which is satisfied when $\mathcal{S}$ is compact) and $\mathrm{Aut}(\mathcal{S})$ acts transitively on $\mathcal{S}_{\mathrm{ext}}$.
Indeed, it is known that the set of exposed points in $\mathcal{S}$ is dense in $\mathcal{S}_{\mathrm{ext}}$ (see e.g., \cite[Theorem 2.4.9]{Gru}); in particular, an exposed point exists in this case.
Let $s_0$ be an exposed point in $\mathcal{S}$, therefore $s_0 \in \mathcal{S}_{\mathrm{ext}}$.
Then the $\mathrm{Aut}(\mathcal{S})$-orbit of $s_0$ consists of exposed points, while this orbit coincides with $\mathcal{S}_{\mathrm{ext}}$ by the transitivity assumption, therefore every $s \in \mathcal{S}_{\mathrm{ext}}$ is exposed as desired.
\end{remark}

Since $\mathrm{Aut}(\mathcal{S})$ is a compact group in the current case, $\mathrm{Aut}(\mathcal{S})$ admits the (normalized) Haar integral $\int_{G} f(g)\,dg$ (where we put $G = \mathrm{Aut}(\mathcal{S})$) of continuous real functions $f$ which is invariant under both left- and right-translations (see e.g., \cite{HM_compact_group}).
By using the Haar integral, it has been shown in \cite[Lemma 1]{Dav} that such a convex set $\mathcal{S}$ has a unique fixed point of the action of $\mathrm{Aut}(\mathcal{S})$.
(For readers' convenience, in \ref{sec:appendix_maximally_mixed_state_symmetric_GPT} we summarize a proof of this fact based on the existence of left-invariant integral for real-valued continuous functions on compact groups.)
From now on, by choosing an appropriate coordinate system of $V(\mathcal{S})$, we assume without loss of generality that the fixed point is the origin $0$ of the vector space $V(\mathcal{S})$.
In this setting, the group $\mathrm{Aut}(\mathcal{S})$ acts on $\mathcal{S}$ as bijective {\em linear} transformations, which extend uniquely to bijective linear transformations on $V(\mathcal{S})$.
Moreover, the next lemma implies that the origin of $V(\mathcal{S})$ is now an interior point of $\mathcal{S}$:
\begin{lemma}
\label{lem:maximally_mixed_is_interior}
Suppose that $\mathrm{Aut}(\mathcal{S})$ acts transitively on $\mathcal{S}_{\mathrm{ext}}$.
Then a fixed point $s \in \mathcal{S}$ of the $\mathrm{Aut}(\mathcal{S})$-action satisfies that $s \in \mathrm{int}_{V(\mathcal{S})}(\mathcal{S})$.
\end{lemma}
\begin{proof}
Assume for contrary that $s \in \partial \mathcal{S}$.
Let $H \subset V(\mathcal{S})$ be a supporting hyperplane of $\mathcal{S}$ at $s$.
As $H$ is convex and $H \subsetneq V(\mathcal{S}) = \mathrm{Aff}(\mathcal{S})$, we have $\mathcal{S}_{\mathrm{ext}} \not\subset H$.
Choose a point $t \in \mathcal{S}_{\mathrm{ext}} \setminus H$.
On the other hand, choose a convex decomposition $s = \sum_{i=1}^{\ell} \lambda_i s_i$ of $s$ into extreme points $s_i \in \mathcal{S}_{\mathrm{ext}}$ such that $\lambda_i > 0$ for every $i$.
Then by the assumption on the $\mathrm{Aut}(\mathcal{S})$-action, we have $f(s_1) = t$ for some $f \in \mathrm{Aut}(\mathcal{S})$.
Now we have $s = f(s) = \sum_{i} \lambda_i f(s_i)$, $f(s_i) \in \mathcal{S}$ and $\lambda_i > 0$ for every $i$.
As $H$ is a supporting hyperplane of $\mathcal{S}$ at $s$, it follows that $f(s_i) \in H$ for every $i$.
However, this contradicts the above fact $f(s_1) = t \not\in H$.
Hence the claim holds.
\end{proof}
As mentioned in Section \ref{subsec:introduction_summary}, the symmetry (or vertex-transitivity) property for convex polytopes $\mathcal{S}$ is usually defined as the transitivity of the affine {\em isometry} group of $\mathcal{S}$ on the vertices (extreme points) of $\mathcal{S}$, while in our argument the members of $\mathrm{Aut}(\mathcal{S})$ are not supposed to be isometric.
However, the next result shows that the two kinds of symmetry are essentially the same (up to affine equivalence):
\begin{proposition}
\label{prop:isometry}
Suppose that $\mathrm{Aut}(\mathcal{S})$ acts transitively on $\mathcal{S}_{\mathrm{ext}}$ and the unique fixed point of the $\mathrm{Aut}(\mathcal{S})$-action is the origin of $V(\mathcal{S})$.
Then there exists a bijective linear transformation $\varphi$ on $V(\mathcal{S})$ such that $\mathcal{S}' = \varphi(\mathcal{S})$ is a compact convex subset of $V(\mathcal{S})$ with $\mathrm{Aff}(\mathcal{S}') = V(\mathcal{S})$, $\mathrm{Aut}(\mathcal{S}')$ acts transitively on $\mathcal{S}'_{\mathrm{ext}}$, and each member of $\mathrm{Aut}(\mathcal{S}')$ is an orthogonal linear transformation (hence an isometry) on $V(\mathcal{S})$ with respect to the standard inner product $\langle \cdot,\cdot \rangle$.
\end{proposition}

Although Proposition \ref{prop:isometry} would be a consequence of a kind of standard argument (see e.g., \cite{CAP_preprint,DLL,Masanes}), we give a proof of Proposition \ref{prop:isometry} in \ref{sec:appendix_proof_isometry} for the sake of completeness.
An easy but important consequence of the above observation is the following:
\begin{corollary}
\label{cor:boundary_all_extreme}
Suppose that $\mathrm{Aut}(\mathcal{S})$ acts transitively on $\mathcal{S}_{\mathrm{ext}}$.
Then $\mathcal{S}$ is affine isomorphic to the unit ball if and only if every boundary point of $\mathcal{S}$ is an extreme point of $\mathcal{S}$.
\end{corollary}
\begin{proof}
As the \lq\lq only if'' part is trivial, we show the \lq\lq if'' part.
By virtue of Proposition \ref{prop:isometry}, we may assume without loss of generality that $\mathrm{Aut}(\mathcal{S})$ acts on $\mathcal{S}$ as linear isometries (with respect to the Euclidean distance on $V(\mathcal{S})$).
This implies that every boundary point of $\mathcal{S}$, which is now an extreme point of $\mathcal{S}$ by the assumption, lies in a common sphere $S$ in $V(\mathcal{S})$ with the origin as the center point, hence we have $\partial \mathcal{S} = S$ and $\mathcal{S}$ is a ball surrounded by $S$, as desired.
\end{proof}

On the other hand, we present several properties for the notion of distinguishability introduced in Definition \ref{defn:distinguishability}.
First, we give a geometric interpretation of the notion of distinguishability mentioned in Section \ref{subsec:introduction_summary}:
\begin{lemma}
\label{lem:distinguishable_states}
We temporarily relax the assumption that $\mathcal{S}$ is compact.
Let $n \geq 2$.
Then points $s_1,s_2,\dots,s_n$ of $\mathcal{S}$ are distinguishable (see Definition \ref{defn:distinguishability} for the terminology) if and only if there exists a collection $(H_i)_{i=1}^{n}$ of supporting hyperplanes $H_i$ of $\mathcal{S}$ such that the set $\{v_1,\dots,v_n\}$ of normal vectors $v_i$ of $H_i$ is linearly dependent and we have $s_i \not\in H_i$ and $s_i \in H_j$ for every $i \neq j$.
\end{lemma}
\begin{proof}
Let $\langle \cdot,\cdot \rangle$ denote the standard inner product on the Euclidean space $V(\mathcal{S})$.
First we show the \lq\lq only if'' part.
Choose the affine functionals $e_i$ as in Definition \ref{defn:distinguishability}.
For each $i$, there are a non-zero vector $v_i \in V(\mathcal{S})$ and a constant $c_i \in \mathbb{R}$ such that $e_i(s) = \langle v_i,s \rangle + c_i$ for every $s \in \mathcal{S}$.
As $\sum_{i=1}^{n} e_i = 1$ is constant on $\mathcal{S}$ and $\mathrm{Aff}(\mathcal{S}) = V(\mathcal{S})$, it follows that $\sum_{i=1}^{n} v_i = 0$.
Put $H_i = \{v \in V(\mathcal{S}) \mid e_i(v) = 0\}$, $1 \leq i \leq n$, where we abuse the notation $e_i$ to denote the affine extension of $e_i$ to $V(\mathcal{S}) = \mathrm{Aff}(\mathcal{S})$.
Then we have $s_i \not\in H_i$ and $s_i \in H_j$ for every $i \neq j$, therefore $H_i \cap \mathcal{S} \neq \emptyset$.
As $e_i \geq 0$ on $\mathcal{S}$, it follows that the $H_i$ are supporting hyperplanes of $\mathcal{S}$.
Moreover, $v_i$ is proportional to the normal vector of $H_i$, therefore the normal vectors of $H_i$ are linearly dependent.
Hence the proof of the \lq\lq only if'' part is concluded.

Secondly, we show the \lq\lq if'' part.
Choose coefficients $\lambda_1,\dots,\lambda_n \in \mathbb{R}$ such that $\sum_{i=1}^{n} \lambda_i v_i = 0$ and at least one $\lambda_i$ is non-zero.
Choose a point $t_i \in H_i$ for each $i$.
We define an affine functional $e_i$ on $V(\mathcal{S})$ by $e_i(v) = \langle \lambda_i v_i, v - t_i \rangle$ for $v \in V(\mathcal{S})$.
We have $e_i(H_i) = \{0\}$ as $v_i$ is orthogonal to $H_i$.
On the other hand, by putting $e = \sum_{i=1}^{n} e_i$, for each $v \in V(\mathcal{S})$ we have
\begin{equation}
e(v) = \sum_{i=1}^{n} e_i(v)
= \sum_{i=1}^{n} (\langle \lambda_i v_i, v \rangle - \langle \lambda_i v_i, t_i \rangle)
= \langle \sum_{i=1}^{n} \lambda_i v_i, v \rangle - \sum_{i=1}^{n} \langle \lambda_i v_i, t_i \rangle
= - \sum_{i=1}^{n} \langle \lambda_i v_i, t_i \rangle \enspace.
\end{equation}
Put $c = - \sum_{i=1}^{n} \langle \lambda_i v_i, t_i \rangle$, therefore $e$ is constantly equal to $c$.
Now choose an index $i_0$ such that $\lambda_{i_0} \neq 0$.
Then, as $s_{i_0} \in H_j$ and $e_j(H_j) = \{0\}$ for every $j \neq i_0$, we have $e_{i_0}(s_{i_0}) = e(s_{i_0}) = c$.
As $s_{i_0} \not\in H_{i_0}$ and $\lambda_{i_0} \neq 0$, we have $e_{i_0}(s_{i_0}) = \lambda_{i_0} \langle v_{i_0}, s_{i_0} - t_{i_0} \rangle \neq 0$, therefore $c \neq 0$.
Now put $e'_i = e_i / c$ for each $i$.
Then we have $e'_i(H_i) = \{0\}$, $\sum_{i=1}^{n} e'_i = e / c = 1$ and $e'_i(s_i) = e_i(s_i) / c = e(s_i) / c = 1$ for each $i$.
As $H_i$ is a supporting hyperplane of $\mathcal{S}$ and $s_i \in \mathcal{S}$, this implies that $\mathcal{S}$ is included in the closed half-space $\{v \in V(\mathcal{S}) \mid e'_i(v) \geq 0\}$, therefore $e'_i \geq 0$ on $\mathcal{S}$.
Hence $s_1,\dots,s_n$ are distinguishable, concluding the proof of Lemma \ref{lem:distinguishable_states}.
\end{proof}
\begin{remark}
\label{rem:distinguishable_states_hyperplanes}
In the situation of Lemma \ref{lem:distinguishable_states}, any proper subset of the set $\{v_1,\dots,v_n\}$ of normal vectors of the hyperplanes $H_i$ is linearly independent.
Indeed, when we choose $\lambda_1,\dots,\lambda_n \in \mathbb{R}$ as in the proof of that lemma, for each $i$ we have $e_i(s_i) = e(s_i) = c \neq 0$, while $e_i(s_i) = \lambda_i \langle v_i, s_i - t_i \rangle$.
Therefore we have $\lambda_i \neq 0$ for every $i$, which implies the claim.
\end{remark}
Now we give an upper bound of size of a collection of distinguishable points:
\begin{lemma}
\label{lem:maximal_distinguishable}
We temporarily relax the assumption that $\mathcal{S}$ is compact.
Suppose that $s_1,\dots,s_k \in \mathcal{S}$ are distinguishable.
Then we have $k \leq \dim(\mathcal{S}) + 1$.
Moreover, if in addition $k = \dim(\mathcal{S}) + 1$, then $\mathcal{S}$ is the convex hull of $\{s_j\}_{j=1}^{k}$, which is a $\dim(\mathcal{S})$-dimensional simplex.
\end{lemma}
\begin{proof}
Let $(e_j)_{j=1}^{k}$ be the collection of affine functionals corresponding to $(s_j)_{j=1}^{k}$, therefore $e_i(s_j) = \delta_{i,j}$.
Now if $s = \sum_{j=1}^{k} \lambda_j s_j$ and $\sum_{j=1}^{k} \lambda_j = 1$, then we have $e_i(s) = \sum_{j} \lambda_j e_i(s_j) = \lambda_i$ for every $i$, therefore the decomposition of $s$ into the points $s_j$ is uniquely determined.
Hence the points $s_1,\dots,s_k$ are affine independent, therefore $k \leq \dim(\mathcal{S}) + 1$.
Moreover, if $k = \dim(\mathcal{S}) + 1$, then we have $V(\mathcal{S}) = \mathrm{Aff}(s_1,\dots,s_k)$ as the points $s_1,\dots,s_k$ are affine independent, therefore each $s \in \mathcal{S}$ admits a decomposition $s = \sum_{j=1}^{k} \lambda_j s_j$ such that $\sum_{j} \lambda_j = 1$.
Now we have $\lambda_j = e_j(s) \in [0,1]$ for every $j$, therefore this $s$ is contained in the convex hull of $\{s_1,\dots,s_k\}$.
Hence $\mathcal{S}$ is the convex hull of $\{s_1,\dots,s_k\}$, therefore Lemma \ref{lem:maximal_distinguishable} holds.
\end{proof}

Secondly, for the notion of ($n$-)spectrality introduced in Definition \ref{defn:distinguishably_decomposable}, we have the following two properties (we emphasize again that, as mentioned in the introduction, the distinguishable points in the definition of spectrality should be {\em extreme points}, hence e.g., the square is ruled out):
\begin{lemma}
\label{lem:k-DDPS_pure_states}
We temporarily relax the assumption that $\mathcal{S}$ is compact.
Suppose that $1 \leq \dim(\mathcal{S}) = n < \infty$ and $\mathcal{S}$ has $n$-spectrality.
Then the set $\mathcal{S}_{\mathrm{ext}}$ is uncountably infinite.
\end{lemma}
\begin{proof}
Assume for contrary that $\mathcal{S}_{\mathrm{ext}}$ is at most countable.
Let $\mathcal{B}$ be the collection of the subsets of $\mathcal{S}_{\mathrm{ext}}$ with at most $n$ elements, and let $\mathcal{C}$ be the collection of the convex hulls of $B \in \mathcal{B}$.
Then each member of $\mathcal{C}$ has $n$-dimensional volume $0$, while $\mathcal{B}$ (hence $\mathcal{C}$) is at most countable as well as $\mathcal{S}_{\mathrm{ext}}$, therefore the union of all members of $\mathcal{C}$ also has $n$-dimensional volume $0$.
As the $n$-dimensional convex set $\mathcal{S}$ has positive $n$-dimensional volume, there is a point $s \in \mathcal{S}$ that does not belong to any member of $\mathcal{C}$.
However, as $\mathcal{S}$ has $n$-spectrality, this $s$ admits a convex decomposition $s = \sum_{j=1}^{\ell} \lambda_j s_j$ into distinguishable extreme points $s_1,\dots,s_{\ell} \in \mathcal{S}_{\mathrm{ext}}$ with $\ell \leq n$, therefore $B = \{s_j\}_{j=1}^{\ell} \in \mathcal{B}$ and the convex hull of $B$ including $s$ belongs to $\mathcal{C}$.
This is a contradiction, hence Lemma \ref{lem:k-DDPS_pure_states} holds.
\end{proof}
\begin{corollary}
\label{cor:distinguishably_decomposable_simplex}
We temporarily relax the assumption that $\mathcal{S}$ is compact.
Suppose that $\mathcal{S}$ has spectrality, and $\mathcal{S}_{\mathrm{ext}}$ is at most countable.
Then $\mathcal{S}$ is a $\dim(\mathcal{S})$-simplex.
\end{corollary}
\begin{proof}
By Lemma \ref{lem:k-DDPS_pure_states}, $\mathcal{S}$ does not have $n$-spectrality, where $n = \dim(\mathcal{S})$, while $\mathcal{S}$ has spectrality.
This implies that there is an $s \in \mathcal{S}$ that admits a convex decomposition into at least $n+1$ distinguishable extreme points.
Therefore Lemma \ref{lem:maximal_distinguishable} implies that $\mathcal{S}$ is an $n$-simplex.
Hence Corollary \ref{cor:distinguishably_decomposable_simplex} holds.
\end{proof}

\subsection{Proof of Theorem \ref{thm:characterization_finite_dim}}
\label{subsec:characterization_finite_dim}

In this and the following subsections, we give proofs of our main theorems whose statements have been listed in Section \ref{subsec:introduction_summary}.
In this subsection, we prove Theorem \ref{thm:characterization_finite_dim}.
First we present the following lemma:
\begin{lemma}
\label{lem:2-transitive_implies_transitive}
Suppose that the diagonal action of $\mathrm{Aut}(\mathcal{S})$ on the set of pairs $(s_1,s_2)$ of distinguishable extreme points $s_1,s_2 \in \mathcal{S}_{\mathrm{ext}}$ is transitive.
Then $\mathrm{Aut}(\mathcal{S})$ acts transitively on $\mathcal{S}_{\mathrm{ext}}$.
\end{lemma}
\begin{proof}
First we note that, for any extreme point $s$ of $\mathcal{S}$, there is another extreme point $s'$ for which $s$ and $s'$ are distinguishable.
Indeed, for any supporting hyperplane $H$ of $\mathcal{S}$ containing $s$, we can take another supporting hyperplane $H' \neq H$ of $\mathcal{S}$ which is parallel to $H$.
Now any extreme point $s'$ of $\mathcal{S} \cap H'$ is also an extreme point in $\mathcal{S}$, and $s$ and $s'$ are distinguishable by the choice of $H$ and $H'$.
This implies that, for any two $s_1,s_2 \in \mathcal{S}_{\mathrm{ext}}$, there are $s'_1,s'_2 \in \mathcal{S}_{\mathrm{ext}}$ for which both the pair $(s_1,s'_1)$ and the pair $(s_2,s'_2)$ are distinguishable, and the action of some element $g$ of $\mathrm{Aut}(\mathcal{S})$ maps $(s_1,s'_1)$ to $(s_2,s'_2)$ by the assumption.
Hence the action of $g$ maps $s_1$ to $s_2$, therefore $\mathrm{Aut}(\mathcal{S})$ acts transitively on $\mathcal{S}_{\mathrm{ext}}$, as desired.
\end{proof}

For the \lq\lq only if'' part of Theorem \ref{thm:characterization_finite_dim}, when $\mathcal{S}$ is a finite-dimensional unit ball, it follows from Lemma \ref{lem:distinguishable_states} that a subset of $\mathcal{S}$ of cardinality at least two is a set of distinguishable extreme points if and only if it is a pair of mutually antipodal points (note that now each supporting hyperplane of $\mathcal{S}$ contains precisely one point of $\mathcal{S}$).
This implies that $\mathcal{S}$ indeed satisfies the two conditions in Theorem \ref{thm:characterization_finite_dim}, as desired.

On the other hand, for the \lq\lq if'' part of Theorem \ref{thm:characterization_finite_dim}, it suffices by Corollary \ref{cor:boundary_all_extreme} to show that every boundary point $s$ of $\mathcal{S}$ is an extreme point.
Assume for contrary that $s \not\in \mathcal{S}_{\mathrm{ext}}$.
Take a supporting hyperplane $H$ of $\mathcal{S}$ at $s$.
As $\mathcal{S}$ has $2$-spectrality, we have $s = \lambda_1 s_1 + \lambda_2 s_2$ for some distinguishable points $s_1,s_2 \in \mathcal{S}_{\mathrm{ext}}$ and some $\lambda_1,\lambda_2 > 0$ with $\lambda_1 + \lambda_2 = 1$.
On the other hand, by the same reason, the origin $0$ of $V(\mathcal{S})$ (which is now an interior point of $\mathcal{S}$ by Lemma \ref{lem:maximally_mixed_is_interior}) also admits a decomposition $0 = \mu_1 t_1 + \mu_2 t_2$ into distinguishable points $t_1,t_2 \in \mathcal{S}_{\mathrm{ext}}$ with $\mu_1,\mu_2 > 0$ and $\mu_1 + \mu_2 = 1$.
Now we have $s_1,s_2 \in H$ by the choice of $H$ and the decomposition of $s$.
Moreover, by the first condition of Theorem \ref{thm:characterization_finite_dim}, there is an $f \in \mathrm{Aut}(\mathcal{S})$ such that $f(t_1) = s_1$ and $f(t_2) = s_2$.
Now we have $0 = f(0) = \mu_1 f(t_1) + \mu_2 f(t_2) = \mu_1 s_1 + \mu_2 s_2$, while $s_1,s_2 \in H$ as above, therefore we have $0 \in H$.
This contradicts the fact that $0$ is an interior point of $\mathcal{S}$.
Hence we have $s \in \mathcal{S}_{\mathrm{ext}}$ as desired, concluding the proof of Theorem \ref{thm:characterization_finite_dim}.

\subsection{Proofs of Theorems \ref{thm:2-dimensional_strongly_symmetric} and \ref{thm:3-dimensional_strongly_symmetric}}
\label{subsec:characterization_low_dimensional_strongly_symmetric}

In this subsection, we give proofs of Theorems \ref{thm:2-dimensional_strongly_symmetric} and \ref{thm:3-dimensional_strongly_symmetric} by assuming Theorems \ref{thm:2-dimensional_symmetric} and \ref{thm:3-dimensional_symmetric}.
The proofs of Theorems \ref{thm:2-dimensional_symmetric} and \ref{thm:3-dimensional_symmetric} will be supplied in the next subsection.

First, we prove Theorem \ref{thm:2-dimensional_strongly_symmetric}.
The implication of the second condition from the first one follows from Theorem \ref{thm:2-dimensional_symmetric}, Theorem \ref{thm:characterization_finite_dim} and the fact that when $\mathcal{S}$ is a triangle, the set of the three vertices of $\mathcal{S}$ are distinguishable.
For the other implication of the first condition from the second one, by virtue of Theorem \ref{thm:2-dimensional_symmetric}, it suffices to show that any convex polygon having spectrality is a triangle, which is just a consequence of Corollary \ref{cor:distinguishably_decomposable_simplex}.
Hence the proof of Theorem \ref{thm:2-dimensional_strongly_symmetric} is concluded.

From now, we prove Theorem \ref{thm:3-dimensional_strongly_symmetric}.
The implication of the second condition from the first one follows from Theorem \ref{thm:3-dimensional_symmetric}, Theorem \ref{thm:characterization_finite_dim} and the fact that when $\mathcal{S}$ is a tetrahedron, the set of the four vertices of $\mathcal{S}$ are distinguishable.
For the other implication of the first condition from the second one, by virtue of Theorem \ref{thm:3-dimensional_symmetric}, it suffices to show that any $3$-dimensional convex polytope having spectrality is a tetrahedron, and the $3$-dimensional circular cylinder $\mathcal{S}$ specified in the second condition of Theorem \ref{thm:3-dimensional_symmetric} does not have spectrality.
The first part is just a consequence of Corollary \ref{cor:distinguishably_decomposable_simplex}.
For the second part, put $C_z = \{{}^t(x,y,z) \mid x^2 + y^2 = 1\}$ for $z \in \{0,1\}$.
Note that $\mathcal{S}_{\mathrm{ext}} = C_0 \cup C_1$.
We focus on the point $v = {}^t(0,0,1/4)$, and assume for contrary that $v = \sum_{j = 1}^{k} \lambda_j s_j$ for some $s_j \in \mathcal{S}_{\mathrm{ext}}$ and $\lambda_j > 0$ such that $\sum_{j = 1}^{k} \lambda_j = 1$ and $s_1,\dots,s_k$ are distinguishable (hence $s_1,\dots,s_k$ are all different).
We have $k \leq 3$ by Lemma \ref{lem:maximal_distinguishable}, while the case $k \leq 2$ is impossible by the shape of $\mathcal{S}_{\mathrm{ext}} = C_0 \cup C_1$, therefore we have $k = 3$.
Now the set $\{s_1,s_2,s_3\}$ contains at least one point of each $C_z$, $z = 0,1$.
By symmetry, we may assume without loss of generality that $s_1,s_2 \in C_1$ and $s_3 = {}^t(1,0,0) \in C_0$.
As $s_1,s_2,s_3$ are distinguishable, Lemma \ref{lem:distinguishable_states} implies that there are supporting hyperplanes $H_1,H_2$ of $\mathcal{S}$ such that $s_1,s_3 \in H_1$ and $s_2,s_3 \in H_2$.
In particular, for each $j \in \{1,2\}$, the line through $s_j$ and $s_3$ does not intersect the interior of $\mathcal{S}$.
However, by the shape of $\mathcal{S}$, this is possible only when $s_j = {}^t(1,0,1)$, contradicting the fact $s_1 \neq s_2$.
Hence this $\mathcal{S}$ does not have spectrality, concluding the proof of Theorem \ref{thm:3-dimensional_strongly_symmetric}.

\subsection{Proofs of Theorems \ref{thm:2-dimensional_symmetric} and \ref{thm:3-dimensional_symmetric}}
\label{subsec:characterization_low_dimensional_symmetric}

Finally, in this subsection, we give proofs of Theorems \ref{thm:2-dimensional_symmetric} and \ref{thm:3-dimensional_symmetric}.
As the \lq\lq if'' part of each theorem is trivial, we consider the \lq\lq only if'' part from now.

First, for future references, we temporarily consider the case that $\mathcal{S}$ has an arbitrary finite (not necessarily $2$ or $3$) dimension, and $\mathrm{Aut}(\mathcal{S})$ acts transitively on $\mathcal{S}_{\mathrm{ext}}$.
Then, by the arguments in Section \ref{subsec:characterization_preliminary}, we assume without loss of generality that the origin $0$ of the Euclidean space $V(\mathcal{S})$ is an interior point of $\mathcal{S}$ which is the unique fixed point of the $\mathrm{Aut}(\mathcal{S})$-action, and each member of $\mathrm{Aut}(\mathcal{S})$ acts on $V(\mathcal{S})$ as an orthogonal transformation (hence has determinant $\pm 1$).
Now we have the following result, which will be a key ingredient of the proofs of the theorems:
\begin{lemma}
\label{lem:infinite_order_exist}
Under the above setting, assume in addition that $\mathrm{Aut}(\mathcal{S})$ is an infinite group.
Then $\mathrm{Aut}(\mathcal{S})$ has an element of infinite order.
\end{lemma}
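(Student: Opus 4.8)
The plan is to use the fact that, under the stated normalization, $G = \mathrm{Aut}(\mathcal{S})$ is a \emph{compact} group acting on $V(\mathcal{S}) \cong \mathbb{R}^n$ ($n = \dim(\mathcal{S})$) by orthogonal transformations, so that $G$ is realized as a compact subgroup of the orthogonal group $O(n)$. First I would recall that $G = \mathcal{A}^*(\mathcal{S},\mathcal{S})$ is compact: by Lemma \ref{lem:invertible_set_closed} it is closed in $\mathcal{A}(\mathcal{S},\mathcal{S})$, which is compact by Proposition \ref{prop:A_is_compact} (since $\mathcal{S}$ is compact and finite-dimensional), exactly as used in Proposition \ref{prop:pure_states_symmetric_GPT}. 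By the normalization coming from Proposition \ref{prop:isometry}, every $g \in G$ is an orthogonal linear transformation of $V(\mathcal{S})$, so $G$ is a subgroup of $O(n)$; being compact inside the Hausdorff Lie group $O(n)$, it is closed. Cartan's closed subgroup theorem then upgrades $G$ to an embedded Lie subgroup, so $G$ is itself a compact Lie group.

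The second step extracts an infinite-order element from positive dimension. Since $G$ is infinite and compact, it cannot be discrete (a compact discrete space is finite), so $\dim G \geq 1$ and its Lie algebra $\mathfrak{g} = \mathrm{Lie}(G)$ is nonzero. Choosing any $X \in \mathfrak{g}$ with $X \neq 0$, I consider the one-parameter subgroup $\gamma \colon \mathbb{R} \to G$, $\gamma(t) = \exp(tX)$. Its kernel is a closed subgroup of $\mathbb{R}$, hence equals $\{0\}$, $c\mathbb{Z}$ for some $c > 0$, or all of $\mathbb{R}$; the last case is excluded because $\gamma'(0) = X \neq 0$ makes $\gamma$ non-constant. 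If $\ker\gamma = \{0\}$, then $\gamma$ is injective and $\gamma(1)^m = \gamma(m) \neq \mathrm{id}$ for every nonzero integer $m$, so $\gamma(1)$ has infinite order. If $\ker\gamma = c\mathbb{Z}$, then the image is a circle subgroup isomorphic to $\mathbb{R}/c\mathbb{Z}$, and for any irrational $\alpha$ the element $\gamma(c\alpha)$ satisfies $\gamma(c\alpha)^m = \gamma(mc\alpha) = \mathrm{id}$ iff $m\alpha \in \mathbb{Z}$, which fails for all $m \neq 0$; hence $\gamma(c\alpha)$ again has infinite order. In either case $G$ contains an element of infinite order.

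The conceptual crux is the passage from ``infinite compact group'' to ``positive-dimensional Lie group'', i.e.\ the application of Cartan's closed subgroup theorem: this is precisely where compactness is indispensable. Without it the conclusion would be false, since $O(n)$ does contain infinite torsion subgroups (for instance the torsion part $(\mathbb{Q}/\mathbb{Z})^k$ of a torus, or the dense group generated by rational rotations), but such subgroups are never closed. Thus I expect the only subtle point to be verifying that $G$ is genuinely closed in $O(n)$ and invoking the Lie-theoretic structure correctly; once $\mathfrak{g} \neq \{0\}$ is secured, producing the infinite-order element via a one-parameter subgroup is routine.
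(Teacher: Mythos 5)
Your proof is correct, but it follows a genuinely different route from the paper's. The paper argues by contradiction: assuming $G=\mathrm{Aut}(\mathcal{S})$ is periodic, it applies Schur's theorem (Theorem \ref{thm:Schur}) to conclude $G$ is locally finite, then the Kargapolov--Hall--Kulatilaka theorem (Theorem \ref{thm:locally_finite_abelian}) to extract an infinite abelian subgroup, simultaneously diagonalizes that subgroup over $\mathbb{C}$, selects an infinite family of elements with pairwise distinct non-real eigenvalues in a common eigendirection, and finally uses the compactness of $G$ (Proposition \ref{prop:A_is_compact} and Lemma \ref{lem:invertible_set_closed}) to pass to a limit of suitable powers, producing an element that acts as an irrational rotation on a $2$-plane and thereby contradicting periodicity. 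You instead go through Lie theory: $G$ is compact, hence closed in $O_n(\mathbb{R})$, hence by Cartan's closed subgroup theorem a compact Lie group; infinite plus compact forces $\dim G\geq 1$; and a nontrivial one-parameter subgroup then yields an infinite-order element (your case analysis of the kernel $\{0\}$, $c\mathbb{Z}$, $\mathbb{R}$ is sound). Both arguments pivot on the same compactness of $G$, and both import heavy classical results --- Cartan's theorem for you, Schur and Kargapolov--Hall--Kulatilaka for the paper --- but yours is shorter and more conceptual (an infinite compact linear group must be positive-dimensional), while the paper's stays within linear algebra, point-set compactness, and quotable group-theoretic theorems, avoiding Lie-theoretic machinery altogether. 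The one step you should make explicit is that the compactness of $G$ is established in the topology of $\mathcal{A}(\mathcal{S},\mathcal{S})$, so to conclude closedness in $O_n(\mathbb{R})$ you need the identification of $G$ with its group of linear extensions to be a homeomorphism onto its image; this is routine (a continuous injection from a compact space into a Hausdorff space, using that $\mathcal{S}$ has nonempty interior so that restriction to $\mathcal{S}$ determines the linear map and controls its matrix entries), and you correctly flag it as the only subtle point.
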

In the proof of Lemma \ref{lem:infinite_order_exist}, we use the following two theorems.
The first theorem is a classical affirmative solution for a special case of the general Burnside problem given by Schur \cite{Sch} (see also e.g., \cite[Theorem 9.9]{Lam} for the proof).
Recall that a group $G$ is called {\em periodic} (or {\em torsion}) if every element of $G$ has finite order.
The theorem is the following:
\begin{theorem}
[Schur]
\label{thm:Schur}
Let $K$ be a field of characteristic zero, $1 \leq n < \infty$, and $G$ be a finitely generated periodic subgroup of $GL_n(K)$.
Then $|G| < \infty$.
\end{theorem}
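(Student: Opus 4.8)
The plan is to prove the contrapositive: if every element of $G = \mathrm{Aut}(\mathcal{S})$ has finite order (that is, if $G$ is \emph{periodic}), then $G$ is finite. Recall from the standing setup that $G$ is compact and acts on $V(\mathcal{S}) = \mathbb{R}^n$ by orthogonal transformations. The resulting representation $G \to O(n)$ is a continuous injective homomorphism --- injective because $\mathrm{Aff}(\mathcal{S}) = V(\mathcal{S})$, so an element of $G$ is determined by its linear action --- and since $G$ is compact and $O(n)$ is Hausdorff, this identifies $G$ with a compact, hence closed, subgroup of $O(n)$. I will regard $G$ as such a subgroup throughout.

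The first step is to pass from periodicity to \emph{local finiteness} via Schur's theorem. Let $H \leq G$ be an arbitrary finitely generated subgroup. Then $H$ is a finitely generated periodic subgroup of $GL_n(\mathbb{R}) \subset GL_n(\mathbb{C})$, and $\mathbb{C}$ has characteristic zero, so Theorem \ref{thm:Schur} gives $|H| < \infty$. Hence every finitely generated subgroup of $G$ is finite; equivalently, $G$ is locally finite.

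The second, and main, step combines local finiteness with compactness to force $G$ to be finite. As a closed subgroup of $O(n)$, the group $G$ is a compact Lie group, so its identity component $G^0$ is open and the quotient $G/G^0$ is a compact discrete group, hence finite. It therefore suffices to show $G^0 = \{e\}$. Suppose instead $\dim G^0 \geq 1$. Choosing a nonzero element $X$ of the Lie algebra, the one-parameter subgroup $t \mapsto \exp(tX)$ has closure a nontrivial torus inside $G^0$, and hence $G^0$ contains a circle subgroup $T \cong SO(2)$; as $G$ is closed, $T \subseteq G$. But an irrational rotation $g \in T$ has infinite order, so the finitely generated subgroup $\langle g \rangle \cong \mathbb{Z}$ is infinite, contradicting the local finiteness established above. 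Therefore $G^0 = \{e\}$, so $G$ is discrete and compact, hence finite, which is exactly the contrapositive.

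The crux of the argument --- and the only place compactness is genuinely used --- is this last step. The subtlety it resolves is that infinite periodic subgroups of $O(n)$ do exist (for instance the group of all roots of unity inside $SO(2)$), so periodicity alone is nowhere near enough; what rescues the argument is that any such group is dense but \emph{never} closed, while our $G$ is closed. Consequently I expect the real work to lie in justifying, at the level of rigor the paper demands, that an infinite closed subgroup of $O(n)$ has a positive-dimensional identity component and hence contains a genuine circle. Schur's theorem plays the role of reducing the torsion hypothesis to local finiteness, so that this purely structural fact about compact groups can be brought to bear to close the argument.
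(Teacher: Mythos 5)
Your proposal does not prove the statement at hand; it is circular. The statement to be proved is Schur's theorem itself --- that a finitely generated periodic subgroup of $GL_n(K)$, for $K$ an \emph{arbitrary} field of characteristic zero, is finite --- yet in your first step you write that ``Theorem \ref{thm:Schur} gives $|H| < \infty$'', i.e., you invoke exactly the theorem you were asked to prove. What you have actually sketched is (the contrapositive of) the paper's Lemma \ref{lem:infinite_order_exist}, namely that $\mathrm{Aut}(\mathcal{S})$, being a compact group of orthogonal transformations, cannot be both infinite and periodic; that is the \emph{application} for which the paper uses Schur's theorem, not the theorem itself. Nor can your compactness framework be repaired into a proof of the quoted statement: Schur's theorem concerns an abstract subgroup of $GL_n(K)$ with no topology in sight --- $K$ could be $\mathbb{Q}(t)$ or any other characteristic-zero field --- so there is no compact closure, no invariant inner product, no identification with a closed subgroup of $O(n)$, and hence no identity component or circle subgroup to exploit. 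The entire mechanism of your second step has no purchase on the statement as given.

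For the record, the paper does not prove Theorem \ref{thm:Schur} either: it cites Schur's original article \cite{Sch} and points to \cite[Theorem 9.9]{Lam} for a proof. The known argument is purely algebraic --- one reduces to a finitely generated field, notes that the eigenvalues of a periodic matrix are roots of unity of degree bounded in terms of $n$ and the field, so that the elements of $G$ realize only finitely many traces, and concludes finiteness by Burnside's trace criterion for irreducible linear groups --- and nothing resembling your topological argument can substitute for it. Had the target instead been Lemma \ref{lem:infinite_order_exist}, your route (closed subgroup of $O(n)$ is a compact Lie group; a positive-dimensional identity component contains a circle, hence an irrational rotation of infinite order) would be a genuine and arguably slicker alternative to the paper's combination of Theorem \ref{thm:Schur}, the Kargapolov/Hall--Kulatilaka theorem (Theorem \ref{thm:locally_finite_abelian}), and the explicit limiting construction of an element of infinite order; but as a proof of the statement actually posed, your text assumes its own conclusion.
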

The second theorem was proven by Kargapolov \cite{Kar} and independently by Hall and Kulatilaka \cite{HK}.
Recall that a group $G$ is called {\em locally finite} if every finitely generated subgroup of $G$ has finite order.
The theorem is the following:
\begin{theorem}
[Kargapolov, and Hall--Kulatilaka]
\label{thm:locally_finite_abelian}
Every infinite locally finite group contains an infinite abelian subgroup.
\end{theorem}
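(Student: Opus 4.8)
The plan is to argue by contradiction, assuming that every element of $G = \mathrm{Aut}(\mathcal{S})$ has finite order, i.e.\ that $G$ is periodic, and to derive from this --- together with the infinitude and compactness of $G$ --- an element of infinite order, a contradiction. Throughout I regard $G$ as a subgroup of $O(n) \subset U(n)$ (via the standing assumption that its members are orthogonal transformations of $V(\mathcal{S}) = \mathbb{R}^n$), and I use that $G$ is compact, hence closed in $O(n)$, which follows from the results of Section \ref{subsec:characterization_preliminary} (the compactness of $\mathrm{Aut}(\mathcal{S})$ noted there).

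First I would reduce to an abelian situation using the two quoted theorems. Since $\mathbb{R}$ has characteristic zero, every finitely generated subgroup $H$ of $G$ is a finitely generated periodic subgroup of $GL_n(\mathbb{R})$, so Theorem \ref{thm:Schur} gives $|H| < \infty$; thus $G$ is locally finite. As $G$ is assumed infinite, Theorem \ref{thm:locally_finite_abelian} then yields an infinite abelian subgroup $A \leq G$.

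The crux is to manufacture an element of infinite order from $A$, and here compactness is indispensable: without it the rational rotations $\mathbb{Q}/\mathbb{Z} \subset SO(2)$ form an infinite abelian periodic group, so the argument must genuinely exploit that $G$ is closed in $O(n)$. Since $A$ consists of commuting unitary (hence normal) matrices, I would simultaneously unitarily diagonalize it: there is $U \in U(n)$ with $U A U^{-1}$ contained in the diagonal torus $\mathbb{T}^n \subset U(n)$, yielding homomorphisms $\chi_i \colon A \to S^1$, $\chi_i(a) = (UaU^{-1})_{ii}$. The map $a \mapsto (\chi_1(a),\dots,\chi_n(a))$ is injective, so because $A$ is infinite some $\chi_{i_0}(A)$ is an infinite subgroup of $S^1$; and every infinite subgroup of $S^1$ is dense (its closure is a closed, hence infinite, hence full, subgroup). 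Passing to the closure $\overline{A}$ taken inside the compact group $G$ (so that $\overline{A} \subseteq G$), the inclusion $U\overline{A}U^{-1} \subseteq \mathbb{T}^n$ persists because $\mathbb{T}^n$ is closed, whence $\chi_{i_0}$ extends to a continuous homomorphism on $\overline{A}$. Then $\chi_{i_0}(\overline{A})$ is a compact subgroup of $S^1$ containing the dense set $\chi_{i_0}(A)$, so $\chi_{i_0}(\overline{A}) = S^1$. Choosing $b \in \overline{A}$ with $\chi_{i_0}(b) = e^{2\pi i \alpha}$ for an irrational $\alpha$ forces $\chi_{i_0}(b^m) = e^{2\pi i m\alpha} \neq 1$ for all $m \geq 1$, so $b$ has infinite order while $b \in \overline{A} \subseteq G$, contradicting periodicity of $G$.

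I expect the main obstacle to be precisely this final step: extracting a genuine infinite-order element rather than merely an infinite torsion abelian group. The trap is that infinitude and abelianness alone are compatible with periodicity (as the $\mathbb{Q}/\mathbb{Z} \subset SO(2)$ example shows), so the care point is to invoke compactness at the right moment --- forming the closure inside $G$ and using continuity of the diagonal matrix-entry characters to upgrade a dense subgroup of $S^1$ to all of $S^1$. A secondary point I would verify carefully is that $\chi_{i_0}$ remains a homomorphism on $\overline{A}$, which rests on $U\overline{A}U^{-1}$ still lying in the diagonal torus.
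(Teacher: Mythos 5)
There is a genuine gap here, and it is a gap of target rather than of technique: you have not proved the statement at all. The statement to be proved is the Kargapolov--Hall--Kulatilaka theorem itself --- that \emph{every} infinite locally finite group (an abstract group, with no linearity, no topology, no compactness available) contains an infinite abelian subgroup. Your argument instead takes $G = \mathrm{Aut}(\mathcal{S}) \leq O(n)$, assumes $G$ is periodic, and explicitly \emph{invokes} Theorem \ref{thm:locally_finite_abelian} in your second paragraph (``Theorem \ref{thm:locally_finite_abelian} then yields an infinite abelian subgroup $A \leq G$'') in order to derive an element of infinite order. As a proof of Theorem \ref{thm:locally_finite_abelian} this is circular; what you have actually written is a proof of the paper's Lemma \ref{lem:infinite_order_exist}, the downstream result in which the paper \emph{uses} Theorems \ref{thm:Schur} and \ref{thm:locally_finite_abelian} as black boxes. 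Note that the paper itself offers no proof of Theorem \ref{thm:locally_finite_abelian} either: it is quoted from Kargapolov \cite{Kar} and Hall--Kulatilaka \cite{HK}, and it is a deep purely group-theoretic result (the Hall--Kulatilaka proof relies on the Feit--Thompson odd-order theorem), so no argument exploiting closedness in $O(n)$ or compactness could establish it in the stated generality --- an abstract locally finite group need not be linear and carries no compact topology to close up in. Your own example $\mathbb{Q}/\mathbb{Z} \subset SO(2)$ should have been a warning sign: it satisfies the hypotheses of the theorem (it is infinite, locally finite, and abelian, hence trivially contains an infinite abelian subgroup), yet your machinery of taking closures would be inapplicable to it as an abstract group, showing your argument addresses a different question.

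For what it is worth, as a blind reconstruction of Lemma \ref{lem:infinite_order_exist} your argument is essentially correct and runs parallel to the paper's proof: both simultaneously diagonalize the infinite abelian subgroup over $\mathbb{C}$, both isolate a coordinate character taking infinitely many values on the unit circle, and both use compactness of $G$ (Proposition \ref{prop:A_is_compact} and Lemma \ref{lem:invertible_set_closed}) to pass to a limit of infinite order. Your route via the continuous characters $\chi_{i_0} \colon \overline{A} \to S^1$ and the fact that an infinite subgroup of $S^1$ is dense is somewhat slicker than the paper's explicit bookkeeping with eigenvalue tuples $(\alpha_{f,1},\dots,\alpha_{f,n})$, coprime denominators $q_f$, and rational approximations of an irrational rotation angle; one small point to patch in your version is that simultaneous diagonalizability requires $A$ to consist of \emph{commuting} normal matrices, which holds for finitely many elements but for the whole infinite $A$ needs the standard argument via common eigenspace decompositions (or a maximality/compactness argument), not merely normality of each element. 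But none of this repairs the central defect: the statement you were asked to prove is the quoted group-theoretic theorem, and your proposal presupposes it.
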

\begin{proof}
[Proof of Lemma \ref{lem:infinite_order_exist}]
Assume for contrary that $G = \mathrm{Aut}(\mathcal{S})$ has no elements of infinite order, i.e., $G$ is periodic.
As $G$ acts faithfully on $V(\mathcal{S})$ as a group of orthogonal transformations, $G$ is identified with a subgroup of $O_n(\mathbb{R})$ (hence a subgroup of $GL_n(\mathbb{R})$) where $n = \dim(\mathcal{S}) < \infty$.
Then Theorem \ref{thm:Schur} implies that $G$ is locally finite, therefore Theorem \ref{thm:locally_finite_abelian} implies that $G$ contains an infinite abelian subgroup $H$.
As each orthogonal transformation $f \in H$ is diagonalizable over $\mathbb{C}$, the members of $H$ are simultaneously diagonalizable over $\mathbb{C}$.
Let $v_1,\dots,v_n$ be the common eigenvectors of the members of $H$ in the complexification $V(\mathcal{S})^{\mathbb{C}} = \mathbb{C} \otimes_{\mathbb{R}} V(\mathcal{S})$ of $V(\mathcal{S})$, and let $\alpha_{f,1},\dots,\alpha_{f,n} \in \mathbb{C}$ be the corresponding eigenvalues of $f \in H$.
As each $f \in H$ is an orthogonal transformation, we have $|\alpha_{f,j}| = 1$ for every $f \in H$ and $1 \leq j \leq n$.
Note that $(\alpha_{f,1},\dots,\alpha_{f,n}) \neq (\alpha_{g,1},\dots,\alpha_{g,n})$ for any distinct $f,g \in H$.

First, we show that there exist an index $1 \leq j_0 \leq n$ and an infinite subset $H_0 \subset H$ such that $\alpha_{f,j_0} \not\in \mathbb{R}$ and $\alpha_{f,j_0} \neq \alpha_{g,j_0}$ for any distinct $f,g \in H_0$.
To prove this, we consider the following auxiliary condition, where $d$ is a non-negative integer:
\begin{quote}
There are distinct indices $j_{1,1},j_{1,2},j_{2,1},j_{2,2},\dots,j_{d,1},j_{d,2}$ in $\{1,2,\dots,n\}$ and an infinite subset $H' \subset H$ such that $\alpha_{f,j_{\ell,1}} \not\in \mathbb{R}$, $\alpha_{f,j_{\ell,2}} = \overline{\alpha_{f,j_{\ell,1}}}$ and $\alpha_{f,j_{\ell,1}} = \alpha_{g,j_{\ell,1}}$ for any $1 \leq \ell \leq d$ and any distinct $f,g \in H'$.
\end{quote}
Take the maximal $d \geq 0$ for which this condition holds (hence $d \leq n/2$).
Then there are an index $j_{d+1,1}$ in $\{1,2,\dots,n\}$ other than $j_{1,1},\dots,j_{d,2}$ and an infinite subset $H'_1 \subset H'$ such that $\alpha_{f,j_{d+1,1}} \not\in \mathbb{R}$ for every $f \in H'_1$.
Indeed, otherwise for each index $j$ the variation of the values $\alpha_{f,j}$ for $f \in H'$ is finite (as $|\alpha_{f,j}| = 1$), so is the variation of $(\alpha_{f,1},\dots,\alpha_{f,n})$, contradicting the fact that $(\alpha_{f,1},\dots,\alpha_{f,n}) \neq (\alpha_{g,1},\dots,\alpha_{g,n})$ for any distinct $f,g \in H'$.
Now for each $f \in H'_1$, as $f$ is a real transformation and $\alpha_{f,j_{\ell,2}} = \overline{\alpha_{f,j_{\ell,1}}}$ for every $1 \leq \ell \leq d$, there is an index $j$ in $\{1,2,\dots,n\}$ other than $j_{1,1},\dots,j_{d,2},j_{d+1,1}$ such that $\alpha_{f,j} = \overline{\alpha_{f,j_{d+1,1}}}$.
As $|H'_1| = \infty$, there are an index $j_{d+1,2}$ in $\{1,2,\dots,n\}$ other than $j_{1,1},\dots,j_{d,2},j_{d+1,1}$ and an infinite subset $H'_2 \subset H'_1$ such that $\alpha_{f,j_{d+1,2}} = \overline{\alpha_{f,j_{d+1,1}}}$ for every $f \in H'_2$.
By the maximality of $d$, it follows that for each $f \in H'_2$, there exist at most a finite number of $g \in H'_2$ such that $\alpha_{f,j_{d+1,1}} = \alpha_{g,j_{d+1,1}}$.
This implies that there is an infinite subset $H'_3 \subset H'_2$ such that $\alpha_{f,j_{d+1,1}} \neq \alpha_{g,j_{d+1,1}}$ for any distinct $f,g \in H'_3$.
Now $j_0 = j_{d+1,1}$ and $H_0 = H'_3$ satisfy the desired condition.

In what follows, we write $v = v_{j_0}$ and $\alpha_f = \alpha_{f,j_0}$ for each $f \in H_0$.
Put $\alpha_f = \exp(2 \pi \theta_f \sqrt{-1})$ with $0 < \theta_f < 1$ for each $f \in H_0$ (note that $\alpha_f \not\in \mathbb{R}$).
Then $\overline{v} \in V(\mathcal{S})^{\mathbb{C}}$ is an eigenvector of each $f \in H_0$ with eigenvalue $\overline{\alpha_f}$.
By putting $u = (v - \overline{v}) / (2 \sqrt{-1})$ and $w = (v + \overline{v}) / 2$, it follows that $u,w \in V(\mathcal{S})$, $v = w + \sqrt{-1} u$, $f(u) = \cos(2 \pi \theta_f) u + \sin(2 \pi \theta_f) w$ and $f(w) = -\sin(2 \pi \theta_f) u + \cos(2 \pi \theta_f) w$ for each $f \in H_0$.
Note that $u$ and $w$ are linearly independent over $\mathbb{R}$, as $v$ and $\overline{v}$ are linearly independent over $\mathbb{C}$.
As $0 \in \mathrm{int}_{V(\mathcal{S})}(\mathcal{S})$, by considering suitable scalar multiplication if necessary, we may assume without loss of generality that $u,w \in \mathcal{S}$.
On the other hand, as each $f \in H_0$ has finite order by the assumption, we have $\theta_f \in \mathbb{Q}$ and we can write $\theta_f = p_f / q_f$ with $p_f$ and $q_f$ being coprime integers such that $1 \leq p_f < q_f$.
Then, as $\alpha_{f,1} \neq \alpha_{g,1}$ for any distinct $f,g \in H_0$ and $|H_0| = \infty$, for each $N > 0$ there is an $f \in H_0$ such that $q_f > N$.

Now choose an irrational $0 < \eta < 1$ (say, $\eta = 1/\sqrt{2}$).
Put $u_{\infty} = \cos(2 \pi \eta) u + \sin(2 \pi \eta) w $ and $w_{\infty} = -\sin(2 \pi \eta) u + \cos(2 \pi \eta) w$.
Then for each integer $k \geq 1$, there is an $f_k \in H_0$ such that $q_{f_k} > k$ as above.
Now there is an integer $r_k$ such that $|\eta - r_k / q_{f_k}| < 1/(2k)$.
As $p_{f_k}$ and $q_{f_k}$ are coprime, there is an integer $h_k$ such that $\exp(2\pi h_k\theta_{f_k}\sqrt{-1}) = \exp(2\pi r_k \sqrt{-1} / q_{f_k})$.
Put $g_k = f_k{}^{h_k} \in G$.
Then we have
\begin{equation}
\begin{split}
g_k(u) &= \cos(2 \pi h_k \theta_{f_k}) u + \sin(2 \pi h_k \theta_{f_k}) w
= \cos(2 \pi r_k / q_{f_k}) u + \sin(2 \pi r_k / q_{f_k}) w \enspace, \\
g_k(w) &= -\sin(2 \pi h_k \theta_{f_k}) u + \cos(2 \pi h_k \theta_{f_k}) w
= -\sin(2 \pi r_k / q_{f_k}) u + \cos(2 \pi r_k / q_{f_k}) w \enspace.
\end{split}
\end{equation}
As $r_k / q_{f_k}$ converges to $\eta$ when $k \to \infty$, the sequences $(g_k(u))_{k \geq 1}$ and $(g_k(w))_{k \geq 1}$ converges to $u_{\infty}$ and $w_{\infty}$, respectively.
On the other hand, it follows from a general result that the compact-open topology on $\mathsf{Aut}(\mathcal{S})$ is metrizable (see e.g., \cite[Section c-20.5]{top_book}), therefore $G$ is a compact metric space.
This implies that the sequence $(g_k)_{k \geq 1}$ in $G$ has a subsequence $(g_{k_{\ell}})_{\ell \geq 1}$ that converges to some $g \in G$.
As the $G$-action on $\mathcal{S}$ is continuous and $u,w \in \mathcal{S}$, it follows that $g(u) = \lim_{\ell \to \infty} g_{k_{\ell}}(u) = u_{\infty}$ and $g(w) = \lim_{\ell \to \infty} g_{k_{\ell}}(w) = w_{\infty}$.
This implies that $g^k(u) = \cos(2 \pi k \eta) u + \sin(2 \pi k \eta) w$ and $g^k(w) = -\sin(2 \pi k \eta) u + \cos(2 \pi k \eta) w$ for every $k \geq 1$, therefore $g^k(u) \neq u$ as $u$ and $w$ are linearly independent over $\mathbb{R}$ and $\eta \not\in \mathbb{Q}$.
Hence $g$ has infinite order, concluding the proof of Lemma \ref{lem:infinite_order_exist}.
\end{proof}
If $\mathcal{S}_{\mathrm{ext}}$ is a finite set, then $\mathcal{S}$ is a convex polytope, which is symmetric (or vertex-transitive) by the assumption on the $\mathrm{Aut}(\mathcal{S})$-action.
From now, we suppose that $\mathcal{S}_{\mathrm{ext}}$ is an infinite set, hence $\mathrm{Aut}(\mathcal{S})$ is also infinite as it acts transitively on $\mathcal{S}_{\mathrm{ext}}$.
Let $f \in \mathrm{Aut}(\mathcal{S})$ be an element of infinite order whose existence is proven by Lemma \ref{lem:infinite_order_exist}.
We may assume without loss of generality that $\det(f) = 1$, by considering (if necessary) $f^2$ instead of $f$.
Now as $f$ is an orthogonal transformation, it follows that $V(\mathcal{S})$ is the orthogonal direct sum of the fixed point space of $f$ and $2$-dimensional $f$-invariant subspaces $W$ to which the restriction of $f$ is a rotation with rotation angle $2 \pi \theta = 2 \pi \theta_W$, $\theta_W \in \mathbb{R}$.
Moreover, as $f$ has infinite order, $\theta = \theta_W$ is irrational for at least one such subspace $W$.

Now we come back to the special case for (the \lq\lq only if'' parts of) Theorems \ref{thm:2-dimensional_symmetric} and \ref{thm:3-dimensional_symmetric} by assuming that $n = \dim(\mathcal{S}) \in \{2,3\}$.
First we consider the case that $n = 2$ (for Theorem \ref{thm:2-dimensional_symmetric}).
Then we have $W = V(\mathcal{S})$, while the rotation angle of $f$ is not a rational multiple of $2 \pi$ as above, therefore the $\langle f \rangle$-orbit of any extreme point of $\mathcal{S}$ is a dense subset of a circle with the origin of $V(\mathcal{S})$ as center point.
As $\mathcal{S}_{\mathrm{ext}}$ is compact (see Proposition \ref{prop:pure_states_symmetric_GPT}), this circle is entirely contained in $\mathcal{S}_{\mathrm{ext}}$, which implies that $\mathcal{S}$ is the disk surrounded by this circle.
Hence the proof of Theorem \ref{thm:2-dimensional_symmetric} is concluded.

From now, we consider the case that $n = 3$ (for Theorem \ref{thm:3-dimensional_symmetric}).
In this case, by counting the dimension, it follows that $V(\mathcal{S})$ is the orthogonal direct sum of $1$-dimensional fixed point space of $f$ and the $2$-dimensional invariant space $W$ as above.
By choosing a coordinate system of $V(\mathcal{S}) = \mathbb{R}^3$ according to the decomposition, we assume without loss of generality that $W$ has orthonormal basis $\{v_1 = {}^t(1,0,0), v_2 = {}^t(0,1,0)\}$ and $v_3 = {}^t(0,0,1)$ is fixed by $f$.
Let $z_{\min}$ and $z_{\max}$ be the minimal and the maximal values $z_0 \in \mathbb{R}$, respectively, such that the plane $H_{z_0} = \{{}^t(x,y,z) \mid z = z_0\}$ has non-empty intersection with $\mathcal{S}$.
Note that $z_{\min} < 0 < z_{\max}$, as $0 \in \mathrm{int}_{V(\mathcal{S})}(\mathcal{S})$.
Note also that each $H_{z_0}$ is invariant under $f$.
Now define $I = \{z \in [z_{\min},z_{\max}] \mid H_z \cap \mathcal{S}_{\mathrm{ext}} \neq \emptyset\}$.
Note that $z_{\min},z_{\max} \in I$.
Then for each $z_0 \in I$, as $f(H_{z_0}) = H_{z_0}$, the same argument as the case $n = 2$ implies that $H_{z_0} \cap \mathcal{S}_{\mathrm{ext}} = \{{}^t(x,y,z_0) \mid x^2 + y^2 = r_{z_0}\}$ and $H_{z_0} \cap \mathcal{S} = \{{}^t(x,y,z_0) \mid x^2 + y^2 \leq r_{z_0}\}$ for some $r_{z_0} \geq 0$.
Note that $r_z > 0$ for every $z \in I \setminus \{z_{\min},z_{\max}\}$.
We divide the rest of the proof into two cases.

First, we consider the case that $I \neq \{z_{\min},z_{\max}\}$.
Then by the above argument, there are a plane $H$ and a simple closed curve $C$ such that $H$ has non-empty intersection with the interior of $\mathcal{S}$ and $C \subset H \cap \mathcal{S}_{\mathrm{ext}}$.
Put $H' = H_{z_{\max}}$, which is a supporting hyperplane of $\mathcal{S}$.
Then, as $\mathrm{Aut}(\mathcal{S})$ acts transitively on $\mathcal{S}_{\mathrm{ext}}$, there is a $g \in \mathrm{Aut}(\mathcal{S})$ such that $g(C) \subset \mathcal{S}_{\mathrm{ext}}$ has a point $s$ common to $H' \cap \mathcal{S}_{\mathrm{ext}}$.
Now as $g(H)$ has non-empty intersection with the interior of $\mathcal{S}$ as well as $H$, we have $g(H) \neq H'$, therefore $g(C) \setminus H'$ has a point $s'$.
This implies that $\mathcal{S}_{\mathrm{ext}}$ contains a simple curve joining $s \in H'$ and $s' \not\in H'$, therefore we have $s' \in H_{z_0}$ and $[z_0,z_{\max}] \subset I$ for some $z_0 < z_{\max}$.
Moreover, by the convexity of $\mathcal{S}$ and the above description of the set $H_z \cap \mathcal{S}_{\mathrm{ext}}$ for $z \in I$, this implies further that $H_z \cap \partial \mathcal{S} = H_z \cap \mathcal{S}_{\mathrm{ext}}$ for every $z \in (z_0,z_{\max})$.
Therefore any point of $H_{z'} \cap \mathcal{S}_{\mathrm{ext}}$, where $z' = (z_0 + z_{\max})/2$, belongs to the interior of $\mathcal{S}_{\mathrm{ext}}$ relative to $\partial \mathcal{S}$.
As the $\mathrm{Aut}(\mathcal{S})$-action is transitive on $\mathcal{S}_{\mathrm{ext}}$ and continuous on $\partial \mathcal{S}$, it follows that every point of $\mathcal{S}_{\mathrm{ext}}$ belongs to the interior of $\mathcal{S}_{\mathrm{ext}}$ relative to $\partial \mathcal{S}$.
This implies that $\mathcal{S}_{\mathrm{ext}} = \partial \mathcal{S}$, as $\partial \mathcal{S}$ is connected and $\mathcal{S}_{\mathrm{ext}}$ is compact.
Hence Corollary \ref{cor:boundary_all_extreme} implies that it is in the third case of Theorem \ref{thm:3-dimensional_symmetric}, as desired.

Secondly, we consider the case that $I = \{z_{\min},z_{\max}\}$, therefore $\mathcal{S}_{\mathrm{ext}} \subset H_{z_{\min}} \cup H_{z_{\max}}$.
Note that $r_{z_{\min}} > 0$ or $r_{z_{\max}} > 0$, as $0 \in \mathrm{int}_{V(\mathcal{S})}(\mathcal{S})$.
Now by the above description of the set $H_z \cap \mathcal{S}_{\mathrm{ext}}$ for $z \in I$, it is in the second case of Theorem \ref{thm:3-dimensional_symmetric} if $r_{z_{\min}} = r_{z_{\max}}$.
From now, we assume for contrary that $r_{z_{\min}} \neq r_{z_{\max}}$ and deduce a contradiction.
By symmetry, we may assume without loss of generality that $r_{z_{\min}} > r_{z_{\max}}$.
Moreover, by choosing a suitable coordinate system, we may assume without loss of generality that $z_{\min} = -1$ and $r_{z_{\min}} = 1$.
Put $z_{\max} = z > 0$ and $r_{z_{\max}} = r < 1$.
As $\mathrm{Aut}(\mathcal{S})$ acts transitively on $\mathcal{S}_{\mathrm{ext}}$, there is a $g \in \mathrm{Aut}(\mathcal{S})$ such that $g({}^t(1,0,-1)) = {}^t(r,0,z)$.
Now $g$ permutes the two connected components $H_{-1} \cap \mathcal{S}_{\mathrm{ext}}$ and $H_z \cap \mathcal{S}_{\mathrm{ext}}$ of $\mathcal{S}_{\mathrm{ext}}$.
This implies that $r > 0$ and $g(H_z) = H_{-1}$, as $H_z$ and $H_{-1}$ are affine hulls of $H_z \cap \mathcal{S}_{\mathrm{ext}}$ and $H_{-1} \cap \mathcal{S}_{\mathrm{ext}}$, respectively.
Put $v = {}^t(-z,0,z)$.
Then we have $v \in H_z$ and $v = -z \cdot {}^t(1,0,-1)$, therefore $g(v) \in H_{-1}$ and $g(v) = -z \cdot g({}^t(1,0,-1)) = {}^t(-r z,0,-z^2)$.
Hence we have $-z^2 = -1$, therefore $z = 1$ as $z > 0$.
Moreover, as $r < 1$, we have $v = {}^t(-1,0,1) \in H_1 \setminus \mathcal{S}$ and $g(v) = {}^t(-r,0,-1) \in \mathcal{S}$, a contradiction.
Hence the proof of Theorem \ref{thm:3-dimensional_symmetric} is concluded.

\section{Preliminaries for general setting}
\label{sec:preliminary}

In the rest of the paper, we study some fundamental properties towards generalizations of the existing operational arguments on general physical theories, including derivation of quantum theory such as in the previous section, from the case of compact and finite-dimensional physical systems to more general cases.
As mentioned in the introduction, such a generalized argument would be worthy from the viewpoint of removing some possibly unnecessary technical assumptions from the study of general physical theories.
This section is devoted to preliminaries for more detailed arguments in Section \ref{sec:dynamics_topology}.

For fundamental facts and terminology regarding topological spaces, we refer to \cite{top_book} or Prerequisites in \cite{tvs}.
We refer to \cite{tvs} for fundamentals of topological vector spaces.
First, we introduce the following terminology, which has been used in the physically motivated preceding works on convex structures mentioned in Section \ref{subsec:introduction_summary}:
\begin{definition}
\label{defn:separated}
We say that a convex set $\mathcal{S}$ is {\em separated} if for any distinct elements $s_1,s_2 \in \mathcal{S}$, there exists an affine functional $f:\mathcal{S} \to \mathbb{R}$ such that $f(\mathcal{S})$ is bounded in $\mathbb{R}$ and $f(s_1) \neq f(s_2)$.
\end{definition}
We notice that, when $\mathcal{S}$ is a convex subset of a finite-dimensional Euclidean space, $\mathcal{S}$ is separated if and only if $\mathcal{S}$ is bounded (see e.g., \cite[Lemma 2.5.1 and Section 2.5, Exercise 1]{Gru}).
Now according to a standard argument (see e.g., \cite{Hol_book}), for our purpose we may assume without loss of generality that a convex set $\mathcal{S}$ is always separated (we describe the argument in \ref{sec:appendix_justification_separated_condition} for the sake of completeness).
In the rest of the paper, we suppose without loss of generality that any convex set denoted by the symbols $\mathcal{S}$, $\mathcal{S}_0$, $\mathcal{S}_1$, $\mathcal{S}_2,\dots$ is separated unless otherwise specified.
Now we present the following preceding result as the starting point of our argument, which says intuitively that any separated convex set has compact convex closure in the completion of the underlying vector space:
\begin{proposition}
[{\cite[Theorem 2.1]{NKM}}]
\label{prop:minimal_framework}
For any (separated) convex set $\mathcal{S}$, there exists a unique (up to isomorphism) collection $(\widetilde{\mathcal{S}},V(\mathcal{S}),\widetilde{V}(\mathcal{S}))$ of objects with the following properties:
\begin{itemize}
\item $V(\mathcal{S})$ and $\widetilde{V}(\mathcal{S})$ are locally convex Hausdorff topological vector spaces such that $V(\mathcal{S})$ is a dense subspace of $\widetilde{V}(\mathcal{S})$.
\item $\mathcal{S}$ is a convex subset of $V(\mathcal{S})$ such that $\mathrm{Aff}(\mathcal{S}) = V(\mathcal{S})$.
\item Let $\mathcal{L}$ denote the set of all continuous linear functionals on $\widetilde{V}(\mathcal{S})$.
Then the weak topology on $\widetilde{V}(\mathcal{S})$ induced by the set $\mathcal{L}$ of mappings coincides with the original topology of $\widetilde{V}(\mathcal{S})$.
\item The weak topology on $V(\mathcal{S})$, induced by the set of all linear functionals $f$ on $V(\mathcal{S})$ such that $f(\mathcal{S}) \subset \mathbb{R}$ is bounded, coincides with the original topology of $V(\mathcal{S})$.
\item The weak topology on $\mathcal{S}$, induced by the set of all affine functions $f:\mathcal{S} \to [0,1]$, coincides with the original topology of $\mathcal{S}$ (the subspace topology relative to $V(\mathcal{S})$).
\item $\widetilde{\mathcal{S}} = \mathrm{cl}_{\widetilde{V}(\mathcal{S})}(\mathcal{S})$, $\widetilde{\mathcal{S}}$ is convex, compact and complete, and $\mathrm{Aff}(\widetilde{\mathcal{S}}) = \widetilde{V}(\mathcal{S})$.
\end{itemize}
\end{proposition}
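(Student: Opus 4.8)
The plan is to realize the whole triple through the duality associated with the space of bounded affine functionals, so that $\widetilde{V}(\mathcal{S})$ appears as a weak-$*$ dual and compactness falls out of Tychonoff's theorem. First I would fix a base point $o \in \mathcal{S}$ and regard $V(\mathcal{S}) := \mathrm{Aff}(\mathcal{S})$ as a real vector space with origin $o$; let $W$ be the linear subspace of the algebraic dual $V(\mathcal{S})^{*}$ consisting of those linear functionals whose restriction to $\mathcal{S}$ is bounded (every bounded affine functional on $\mathcal{S}$ differs from such a linear functional by a constant). The separatedness hypothesis says precisely that $W$ separates the points of $\mathcal{S}$; the first technical point is to upgrade this to separation of all of $V(\mathcal{S})$. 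Here I would use that $\mathcal{S}$ affinely spans $V(\mathcal{S})$ together with boundedness: if some $0 \neq v \in V(\mathcal{S})$ were annihilated by every element of $W$, then $v$ would point along a direction in which no bounded functional varies, which (since $\mathcal{S}$ spans $V(\mathcal{S})$ and one may take a segment of $\mathcal{S}$ in that direction) contradicts the existence of separating bounded functionals. Thus $\langle V(\mathcal{S}), W\rangle$ is a separating dual pairing.

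Next I would put the weak topology $\sigma(V(\mathcal{S}), W)$ on $V(\mathcal{S})$. This is locally convex by construction and Hausdorff because $W$ separates points, giving the required structure on $V(\mathcal{S})$; moreover it \emph{is}, by definition, the topology induced by the bounded linear functionals, which is the fourth bullet, and its trace on $\mathcal{S}$ is generated by the maps $w|_{\mathcal{S}}$, equivalently (after an affine rescaling of each bounded functional into $[0,1]$) by the affine functions $\mathcal{S} \to [0,1]$, which is the fifth bullet. I would then define $\widetilde{V}(\mathcal{S}) := W^{*}$, the algebraic dual of $W$, equipped with the weak-$*$ topology $\sigma(W^{*}, W)$, and embed $V(\mathcal{S})$ into it by $v \mapsto (w \mapsto w(v))$. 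The standard facts from duality theory are that $(W^{*}, \sigma(W^{*}, W))$ is complete (it is a closed linear subspace of the product $\mathbb{R}^{W}$), that its continuous dual is exactly $W$, and that the image of $V(\mathcal{S})$ is dense (a proper closed subspace would be cut out by some $w \in W$ vanishing on all of $V(\mathcal{S})$, forcing $w = 0$). Hence $\widetilde{V}(\mathcal{S})$ is the completion of $V(\mathcal{S})$ for its weak topology, it is locally convex Hausdorff, $V(\mathcal{S})$ is a dense subspace, and the weak topology induced on $\widetilde{V}(\mathcal{S})$ by $\mathcal{L} = W$ coincides with the original one; these are the first, second, and third bullets.

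For the last bullet I would set $\widetilde{\mathcal{S}} := \mathrm{cl}_{\widetilde{V}(\mathcal{S})}(\mathcal{S})$. Since each $w \in W$ is bounded on $\mathcal{S}$, say $w(\mathcal{S}) \subseteq [\alpha_w, \beta_w]$, the image of $\mathcal{S}$ under $\widetilde{V}(\mathcal{S}) \hookrightarrow \mathbb{R}^{W}$ is contained in the box $\prod_{w \in W}[\alpha_w, \beta_w]$, which is compact by Tychonoff's theorem; therefore $\widetilde{\mathcal{S}}$, being closed inside this compact set, is compact, and it is convex as the closure of a convex set and complete as a closed subset of a complete space. The identity $\mathrm{Aff}(\widetilde{\mathcal{S}}) = \widetilde{V}(\mathcal{S})$ follows because the (closed) affine hull of $\widetilde{\mathcal{S}}$ contains $\mathcal{S}$, hence contains $\mathrm{cl}_{\widetilde{V}(\mathcal{S})}(\mathrm{Aff}(\mathcal{S})) = \mathrm{cl}_{\widetilde{V}(\mathcal{S})}(V(\mathcal{S})) = \widetilde{V}(\mathcal{S})$. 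Finally, uniqueness up to isomorphism I would argue as a universal property: any triple meeting the bullets forces $V(\mathcal{S})$ to carry exactly the topology induced by its bounded functionals and $\widetilde{V}(\mathcal{S})$ to be a completion of it with $\widetilde{\mathcal{S}}$ the closure, and completions are unique up to canonical isomorphism.

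I would expect the main obstacle to be the precise identification of the completion with $(W^{*}, \sigma(W^{*}, W))$ together with the verification that its continuous dual is $W$ --- this is where all the bullets about coinciding weak topologies are secretly encoded --- and, in the infinite-dimensional setting, the care needed for the point-separation upgrade from $\mathcal{S}$ to $\mathrm{Aff}(\mathcal{S})$ and for the claim $\mathrm{Aff}(\widetilde{\mathcal{S}}) = \widetilde{V}(\mathcal{S})$, where one must work with closed affine hulls. By contrast, the compactness of $\widetilde{\mathcal{S}}$ via Tychonoff is the cleanest and most robust step.
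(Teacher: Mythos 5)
A preliminary remark on the comparison itself: the paper contains no proof of Proposition~\ref{prop:minimal_framework} --- it is quoted as \cite[Theorem 2.1]{NKM} --- so your proposal must be judged on its own merits. Its skeleton is the natural one: pairing $V(\mathcal{S})=\mathrm{Aff}(\mathcal{S})$ with the space $W$ of linear functionals bounded on $\mathcal{S}$, topologizing $V(\mathcal{S})$ by $\sigma(V(\mathcal{S}),W)$, and extracting compactness of $\widetilde{\mathcal{S}}$ from Tychonoff are all correct, as are the density of $V(\mathcal{S})$ in $W^*$ and the identification of the continuous dual of $(W^*,\sigma(W^*,W))$ with $W$. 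One local repair: your separation upgrade from $\mathcal{S}$ to $V(\mathcal{S})$ is true, but the justification (``take a segment of $\mathcal{S}$ in that direction'') is invalid, since $\mathcal{S}$ need not contain any segment parallel to a given $v$ in its span; the correct route writes $v = a(p-o)-b(q-o)$ with $p,q \in \mathcal{S}$ the convex combinations formed from the positive and negative coefficients, and applies separatedness of $\mathcal{S}$ to $p$ and the point $\tfrac{b}{a}q+(1-\tfrac{b}{a})o \in \mathcal{S}$ (when $a \ge b > 0$; the degenerate cases are similar).

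The genuine gap is the choice $\widetilde{V}(\mathcal{S}) := W^*$, the full algebraic dual: with it, the final bullet $\mathrm{Aff}(\widetilde{\mathcal{S}}) = \widetilde{V}(\mathcal{S})$ is \emph{false} whenever $\mathcal{S}$ is infinite-dimensional. In this paper $\mathrm{Aff}$ is the algebraic affine hull, and that is exactly what is needed where the proposition is used (the proofs of Lemma~\ref{lem:extends_to_affine} and Lemma~\ref{lem:invertible_set_closed} extend affine maps from $\mathcal{S}$, resp.\ $\widetilde{\mathcal{S}}$, to the ambient space, which requires the ambient space to be the algebraic hull); your verification proves only that the \emph{closed} affine hull of $\widetilde{\mathcal{S}}$ is everything, a weakening you flag yourself. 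The algebraic statement genuinely fails for $W^*$: fixing $x_0 \in \widetilde{\mathcal{S}}$, every $\phi \in \widetilde{\mathcal{S}}$ satisfies $\phi(w) \in [\inf_{\mathcal{S}}w,\ \sup_{\mathcal{S}}w]$ for all $w \in W$, so every finite affine combination $\psi$ of points of $\widetilde{\mathcal{S}}$ obeys $|(\psi-x_0)(w)| \le C_\psi\, p(w)$, where $p(w) := \sup_{\mathcal{S}}w - \inf_{\mathcal{S}}w$. Hence $\mathrm{Aff}(\widetilde{\mathcal{S}}) \subset x_0 + W'_p$ with $W'_p$ the $p$-continuous dual of $W$; when $\dim(\mathcal{S}) = \infty$ the normed space $(W,p)$ is infinite-dimensional, so $W'_p \subsetneq W^*$ (take any $p$-discontinuous linear functional) and $\mathrm{Aff}(\widetilde{\mathcal{S}}) \subsetneq \widetilde{V}(\mathcal{S})$. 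The repair is to set $\widetilde{V}(\mathcal{S}) := \mathrm{Aff}\bigl(\mathrm{cl}_{W^*}(\mathcal{S})\bigr)$ with the subspace topology from $\sigma(W^*,W)$: density of $V(\mathcal{S})$ is inherited, and by Hahn--Banach the continuous dual of this subspace is still $\{w|_{\widetilde{V}(\mathcal{S})} : w \in W\}$, so the third bullet survives. Note that this repair also undercuts your uniqueness argument, which invoked uniqueness of completions: the proposition requires only $\widetilde{\mathcal{S}}$, not $\widetilde{V}(\mathcal{S})$, to be complete, and after the repair $\widetilde{V}(\mathcal{S})$ generally is not. Uniqueness should instead be extracted from $\widetilde{\mathcal{S}}$: a compact Hausdorff space carries a unique compatible uniformity, its trace on the dense subset $\mathcal{S}$ is pinned down by the bullets (it is the weak uniformity of the affine functions $\mathcal{S} \to [0,1]$), so $\widetilde{\mathcal{S}}$ is determined as the completion of $\mathcal{S}$ for that uniformity, and $\widetilde{V}(\mathcal{S})$ is then determined as its affine hull.
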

In what follows, we suppose that associated to each convex set $\mathcal{S}$ the objects $\widetilde{\mathcal{S}}$, $V(\mathcal{S})$ and $\widetilde{V}(\mathcal{S})$ as in Proposition \ref{prop:minimal_framework} and the induced topology on $\mathcal{S}$ are given.
Note that $V(\mathcal{S}) = \widetilde{V}(\mathcal{S})$ and $\widetilde{\mathcal{S}} = \mathcal{S}$ if $\mathcal{S}$ is compact.
On the other hand, if $\mathcal{S}$ is finite-dimensional, then $V(\mathcal{S}) = \widetilde{V}(\mathcal{S})$ and it is a Euclidean space of the same dimension as $\mathcal{S}$.

We present two lemmas for later reference.
The first one is the following:
\begin{lemma}
\label{lem:extends_to_affine}
Any continuous affine map $f:\mathcal{S}_1 \to \widetilde{\mathcal{S}}_2$ extends uniquely to an affine map $\widetilde{f}:\widetilde{\mathcal{S}}_1 \to \widetilde{\mathcal{S}}_2$, hence $\widetilde{f}|_{\mathcal{S}_1} = f$.
\end{lemma}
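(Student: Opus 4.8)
The plan is to reduce the extension of the vector-valued map $f$ to the extension of scalar affine functionals, exploiting the fact (Proposition \ref{prop:minimal_framework}) that the topology of $\widetilde{V}(\mathcal{S}_2)$ is the weak topology induced by the family $\mathcal{L}$ of its continuous linear functionals, so that a point of $\widetilde{\mathcal{S}}_2$ is determined by the values of all $\ell\in\mathcal{L}$ on it.

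First I would establish the scalar extension. Fix a continuous linear functional $\ell\in\mathcal{L}$. Since $f$ maps $\mathcal{S}_1$ into the compact set $\widetilde{\mathcal{S}}_2$ and $\ell$ is bounded on compact sets, the composite $\ell\circ f:\mathcal{S}_1\to\mathbb{R}$ is a continuous affine functional with bounded image. Because $\mathrm{Aff}(\mathcal{S}_1)=V(\mathcal{S}_1)$, this functional has a unique affine extension $x\mapsto \lambda(x)+c$ to $V(\mathcal{S}_1)$, and its linear part $\lambda$ has bounded image on $\mathcal{S}_1$. By the characterization of the topology of $V(\mathcal{S}_1)$ as the weak topology induced by the bounded linear functionals, $\lambda$ is continuous, hence uniformly continuous, on $V(\mathcal{S}_1)$; as $\widetilde{V}(\mathcal{S}_1)$ is the completion of $V(\mathcal{S}_1)$ and $\mathbb{R}$ is complete, $\lambda$ extends uniquely to a continuous linear functional on $\widetilde{V}(\mathcal{S}_1)$. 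Consequently $\ell\circ f$ extends to a continuous affine functional $g_{\ell}:\widetilde{\mathcal{S}}_1\to\mathbb{R}$, which is unique because $\mathcal{S}_1$ is dense in $\widetilde{\mathcal{S}}_1$ and $\mathbb{R}$ is Hausdorff.

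Next I would assemble these scalar extensions. Using the embedding $\widetilde{\mathcal{S}}_2\hookrightarrow\prod_{\ell\in\mathcal{L}}\mathbb{R}$, $s\mapsto(\ell(s))_{\ell}$, which is a homeomorphism onto its image and whose image is compact, hence closed, I define $\widetilde{f}:\widetilde{\mathcal{S}}_1\to\prod_{\ell\in\mathcal{L}}\mathbb{R}$ by $\widetilde{f}(x)=(g_{\ell}(x))_{\ell}$. Each coordinate $g_{\ell}$ is continuous and affine, so $\widetilde{f}$ is continuous and affine. On the dense subset $\mathcal{S}_1$ it agrees with the embedded image of $f$, whence $\widetilde{f}(\mathcal{S}_1)$ is contained in the image of $\widetilde{\mathcal{S}}_2$; since $\widetilde{\mathcal{S}}_1$ is compact, $\widetilde{f}(\widetilde{\mathcal{S}}_1)$ is compact, and by continuity it lies in the closure of $\widetilde{f}(\mathcal{S}_1)$, hence in the closed image of $\widetilde{\mathcal{S}}_2$. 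Identifying this image with $\widetilde{\mathcal{S}}_2$ yields the required continuous affine map $\widetilde{f}:\widetilde{\mathcal{S}}_1\to\widetilde{\mathcal{S}}_2$ with $\widetilde{f}|_{\mathcal{S}_1}=f$, and uniqueness follows at once from density of $\mathcal{S}_1$ in $\widetilde{\mathcal{S}}_1$ together with the Hausdorff property of $\widetilde{\mathcal{S}}_2$.

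The step I expect to be the main obstacle is the scalar extension: verifying that a bounded continuous affine functional on $\mathcal{S}_1$ genuinely extends to a continuous functional on the compact closure $\widetilde{\mathcal{S}}_1$. This is exactly where one must invoke the precise descriptions furnished by Proposition \ref{prop:minimal_framework}, namely that the topology of $V(\mathcal{S}_1)$ is the weak topology induced by bounded linear functionals and that $\widetilde{V}(\mathcal{S}_1)$ is the completion of $V(\mathcal{S}_1)$, rather than any ad hoc estimate; the remaining arguments are formal manipulations of weak topologies, density, and compactness.
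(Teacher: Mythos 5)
Your proof is correct, and it reaches the extension by a genuinely different assembly mechanism than the paper's. The paper works with the vector-valued map directly: using $\mathrm{Aff}(\mathcal{S}_1)=V(\mathcal{S}_1)$ it extends $f$ to an affine map $g:V(\mathcal{S}_1)\to\widetilde{V}(\mathcal{S}_2)$, translates by a vector $v$ so that $g+v$ is linear, asserts that $g+v$ is continuous at a point of $\mathcal{S}_1$ and hence uniformly continuous, invokes the extension theorem for uniformly continuous linear maps into a complete space (citing \cite[Section III.1]{tvs}) to extend over the dense inclusion $V(\mathcal{S}_1)\subset\widetilde{V}(\mathcal{S}_1)$, and finally restricts, using closedness of $\widetilde{\mathcal{S}}_2$. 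You instead scalarize: you extend $\ell\circ f$ for each $\ell\in\mathcal{L}$, where only the completeness of $\mathbb{R}$ is needed, and reassemble through the embedding $\widetilde{\mathcal{S}}_2\hookrightarrow\prod_{\ell\in\mathcal{L}}\mathbb{R}$, compactness of $\widetilde{\mathcal{S}}_2$ making its image closed so that the assembled map lands in (a copy of) $\widetilde{\mathcal{S}}_2$. Your route costs extra bookkeeping (injectivity of the embedding, via the Hausdorff property of $\widetilde{V}(\mathcal{S}_2)$ and the weak-topology description in Proposition \ref{prop:minimal_framework}; affineness and continuity of the inverse of the embedding on its image), but it buys two things: it avoids any appeal to an extension theorem for vector-valued maps or to completeness of $\widetilde{V}(\mathcal{S}_2)$, and it makes fully explicit the continuity verification that the paper compresses into the claim that $g+v$ is continuous at a point of $\mathcal{S}_1$ --- a claim that is not immediate from continuity of $f$ alone, since $\mathcal{S}_1$ need not be a neighborhood of any of its points in $V(\mathcal{S}_1)$, and which is most cleanly justified by exactly your scalar argument (boundedness of the linear part on $\mathcal{S}_1$ plus the weak-topology characterization of the topology of $V(\mathcal{S}_1)$). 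Both proofs finish identically: the image of the closure lies in the closure of the image, and uniqueness follows from density of $\mathcal{S}_1$ in $\widetilde{\mathcal{S}}_1$ together with the Hausdorff property of $\widetilde{\mathcal{S}}_2$ (in both cases this argument gives uniqueness among \emph{continuous} affine extensions, which is the sense in which the lemma is used later).
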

\begin{proof}
The uniqueness follows from the fact that $\widetilde{\mathcal{S}}_1 = \mathrm{cl}_{\widetilde{\mathcal{S}}_1}(\mathcal{S}_1) \subset \widetilde{V}(\mathcal{S})$ is Hausdorff.
For the existence, as $\mathrm{Aff}(\mathcal{S}_1) = V(\mathcal{S}_1)$, $f$ extends to an affine map $g:V(\mathcal{S}_1) \to \widetilde{V}(\mathcal{S}_2)$.
Choose $v \in \widetilde{V}(\mathcal{S}_2)$ such that $g + v$ is a linear map.
Then $g + v$ is continuous at a point in $V(\mathcal{S}_1)$, namely any point in $\mathcal{S}_1$, therefore it is uniformly continuous on $V(\mathcal{S}_1)$.
As $\widetilde{V}(\mathcal{S}_1)$ contains $V(\mathcal{S}_1)$ as a dense subspace, the map $g + v$ extends to a unique continuous linear map $h:\widetilde{V}(\mathcal{S}_1) \to \widetilde{V}(\mathcal{S}_2)$ (see e.g., \cite[Section III.1]{tvs}).
Now $\overline{f} = h - v:\widetilde{V}(\mathcal{S}_1) \to \widetilde{V}(\mathcal{S}_2)$ is a continuous affine extension of $f$.
This implies that $\overline{f}(\mathcal{S}_1) \subset \widetilde{\mathcal{S}}_2$, therefore $\overline{f}(\widetilde{\mathcal{S}}_1) \subset \widetilde{\mathcal{S}}_2$ as $\widetilde{\mathcal{S}}_1 = \mathrm{cl}_{\widetilde{V}(\mathcal{S}_1)}(\mathcal{S}_1)$, $\widetilde{\mathcal{S}}_2$ is closed and $\overline{f}$ is continuous.
Hence we obtain a continuous affine extension $\widetilde{f} = \overline{f}|_{\widetilde{\mathcal{S}}_1}:\widetilde{\mathcal{S}}_1 \to \widetilde{\mathcal{S}}_2$ of $f$.
\end{proof}
The second lemma below says intuitively that a closed subset $C$ of an open set $U$ will be still contained in $U$ after a slight moving toward any point:
\begin{lemma}
\label{lem:perturb_closed_set}
Let $C$ and $U$ be a closed subset and an open subset of $\widetilde{\mathcal{S}}$, respectively, such that $C \subset U$.
Then for any $x \in \widetilde{\mathcal{S}}$, there exists an $m \in (0,1)$ such that $\lambda x + (1-\lambda) y \in U$ for every $y \in C$ and $0 \leq \lambda \leq m$.
\end{lemma}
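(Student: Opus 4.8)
The plan is to reduce the statement to a standard compactness argument applied to a suitable continuous family of segments. First I would record that $C$, being a closed subset of the compact set $\widetilde{\mathcal{S}}$, is itself compact. Then I would fix the point $x \in \widetilde{\mathcal{S}}$ and introduce the map
\[
\Phi : [0,1] \times \widetilde{\mathcal{S}} \to \widetilde{\mathcal{S}}, \qquad \Phi(\lambda,y) = \lambda x + (1-\lambda) y ,
\]
which is well defined because $\widetilde{\mathcal{S}}$ is convex and both $x$ and $y$ lie in $\widetilde{\mathcal{S}}$, and which is continuous since $x$ is held fixed and addition and scalar multiplication are continuous operations in the topological vector space $\widetilde{V}(\mathcal{S})$. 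The quantity to be controlled, $\lambda x + (1-\lambda)y$, is thus exactly the value $\Phi(\lambda,y)$, and the assertion becomes: there is an $m \in (0,1)$ with $\Phi(\lambda,y) \in U$ for all $(\lambda,y) \in [0,m] \times C$.

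The key observation I would use is that $\Phi(0,y) = y$, so every point of the slice $\{0\} \times C$ is mapped into $U$; equivalently $\{0\} \times C \subset \Phi^{-1}(U)$, where $\Phi^{-1}(U)$ is open in $[0,1] \times \widetilde{\mathcal{S}}$ by continuity of $\Phi$ and openness of $U$ (in the subspace topology of $\widetilde{\mathcal{S}}$). Restricting to the compact factor $C$, I then have an open subset of $[0,1] \times C$ containing the whole slice at $\lambda = 0$, and I would invoke the tube lemma to obtain a neighborhood $W$ of $0$ in $[0,1]$ with $W \times C \subset \Phi^{-1}(U)$; choosing $m \in (0,1)$ with $[0,m] \subset W$ finishes the argument. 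Concretely, if I prefer to inline the tube lemma, for each $y \in C$ I would use continuity of $\Phi$ at $(0,y)$ to find a basic open neighborhood $[0,m_y) \times N_y \subset \Phi^{-1}(U)$ (with $N_y$ open in $\widetilde{\mathcal{S}}$ containing $y$), cover $C$ by finitely many $N_{y_1},\dots,N_{y_k}$, and take $m = \min\{m_{y_1},\dots,m_{y_k}\}$, shrunk into $(0,1)$ if necessary.

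I expect no serious obstacle here; the only genuinely essential point is the \emph{uniformity} of the bound $m$ over all $y \in C$ simultaneously, and this is precisely what the compactness of $C$ supplies: without it, the individual thresholds $m_y$ could have infimum zero. A secondary, purely technical point to verify carefully is the continuity of $\Phi$ in the abstract locally convex setting of $\widetilde{V}(\mathcal{S})$ (rather than in a Euclidean or normed space), but this follows immediately from the joint continuity of the vector space operations together with the fact that $\widetilde{\mathcal{S}}$ carries the subspace topology.
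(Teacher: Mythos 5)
Your proof is correct, and its core is the same compactness mechanism as the paper's: both arguments exploit the joint continuity of the convex-combination map together with the compactness of $C$ to extract, from product neighborhoods covering the slice $\{0\} \times C$, a uniform threshold $m$ --- i.e., a tube-lemma argument. The difference is that the paper first invokes Urysohn's Lemma (using that $\widetilde{\mathcal{S}}$ is compact Hausdorff, hence normal) to produce a continuous $F:\widetilde{\mathcal{S}} \to [0,1]$ with $C \subset F^{-1}(0)$ and $U^c \subset F^{-1}(1)$, and then runs the tube argument on the open set $\varphi^{-1}([0,1))$, where $\varphi(\lambda,y) = F(\lambda x + (1-\lambda)y)$; you run it directly on $\Phi^{-1}(U)$. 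Your route is leaner and shows that the Urysohn detour is logically superfluous: $\Phi^{-1}(U)$ is already the open set one needs, so normality of $\widetilde{\mathcal{S}}$ plays no essential role, and only continuity of convex combination plus compactness of $C$ are used. One microscopic slip in your inlined version: since the sets $[0,m_{y_i}) \times N_{y_i}$ are half-open in the first coordinate, you must take $m$ \emph{strictly} smaller than $\min_i m_{y_i}$ (e.g., half of it), not the minimum itself, so that every $\lambda \leq m$ lies in each $[0,m_{y_i})$; your parenthetical ``shrunk into $(0,1)$ if necessary'' covers this, but it is needed for that reason and not only to force $m < 1$.
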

\begin{proof}
First, as $\widetilde{\mathcal{S}}$ is compact and Hausdorff, $\widetilde{\mathcal{S}}$ is a normal space and $C$ is compact.
The Urysohn's Lemma implies that there exists a continuous map $F:\widetilde{\mathcal{S}} \to [0,1]$ such that $C \subset F^{-1}(0)$ and $U^c \subset F^{-1}(1)$.
Then the map $\varphi:[0,1] \times C \to [0,1]$, $\varphi(\lambda,y) = F(\lambda x + (1-\lambda) y)$, is also continuous, as the operation of taking a convex combination of two elements in $\widetilde{\mathcal{S}}$ is continuous.
Note that $\{0\} \times C \subset \varphi^{-1}(0) \subset \varphi^{-1}([0,1))$, therefore for each $y \in C$, there are relatively open neighborhoods $I_y \subset [0,1]$ of $0$ and $W_y \subset C$ of $y$, respectively, such that $I_y \times W_y \subset \varphi^{-1}([0,1))$.
As $C$ is compact, there are finitely many elements $y_1,\dots,y_k \in C$ such that $C = \bigcup_{i=1}^{k} W_{y_i}$.
Now $\bigcap_{i=1}^{k} I_y$ is a relatively open neighborhood of $0$ in $[0,1]$, therefore $[0,m] \subset \bigcap_{i=1}^{k} I_y$ for some $0 < m < 1$.
We show that this $m$ satisfies the condition.
Let $y \in C$ and $0 \leq \lambda \leq m$.
Then we have $y \in W_{y_i}$ for some $1 \leq i \leq k$.
Now $(\lambda,y) \in I_{y_i} \times W_{y_i}$ and $\varphi(\lambda,y) = F(\lambda x + (1-\lambda) y) < 1$, therefore $\lambda x + (1-\lambda) y \in U$.
Hence Lemma \ref{lem:perturb_closed_set} holds.
\end{proof}
For two convex sets $\mathcal{S}_1$ and $\mathcal{S}_2$, let $\mathcal{A}(\mathcal{S}_1,\mathcal{S}_2)$ denote the set of all affine maps $f:\mathcal{S}_1 \to \mathcal{S}_2$.
Then by Lemma \ref{lem:dynamics_virtual_states} below, $\mathcal{A}(\mathcal{S}_1,\mathcal{S}_2)$ can be embedded into the set $\mathcal{A}^c(\widetilde{A}_1,\widetilde{A}_2)$ of all continuous affine maps $f:\widetilde{A}_1 \to \widetilde{A}_2$ between {\em compact} sets, which would make the situation easier.
However, if we endow the set $\mathcal{A}(\mathcal{S}_1,\mathcal{S}_2)$ with the subspace topology of the standard compact-open topology on $\mathcal{A}^c(\widetilde{A}_1,\widetilde{A}_2)$, then to determine the open subsets of $\mathcal{A}(\mathcal{S}_1,\mathcal{S}_2)$ we need to consider the behavior of (the extension of) a map $f \in \mathcal{A}(\mathcal{S}_1,\mathcal{S}_2)$ at a subset of $\widetilde{\mathcal{S}}_1$ which may be even entirely outside the original set $\mathcal{S}_1$.
To reduce such difficulty, here we define the notion of \lq\lq essential'' open or closed sets as follows, which means intuitively that the \lq\lq essential shape'' of such an essential open or closed set is not affected by pasting or detaching, respectively, the \lq\lq skin'' $\widetilde{\mathcal{S}} \setminus \mathcal{S}$ of the convex set $\mathcal{S}$.
Formally, we present the following definition:
\begin{definition}
\label{defn:essential_subset}
We say that an open subset $O$ of $\widetilde{\mathcal{S}}$ is {\em essential} if $\mathrm{int}_{\widetilde{\mathcal{S}}}(O \cup (\widetilde{\mathcal{S}} \setminus \mathcal{S})) = O$.
On the other hand, we say that a closed subset $C$ of $\widetilde{\mathcal{S}}$ is {\em essential} if $\mathrm{cl}_{\widetilde{\mathcal{S}}}(C \cap \mathcal{S}) = C$.
\end{definition}
\begin{example}
\label{ex:essential_subset}
We consider the case that $\mathcal{S} = \{(x,y) \in \mathbb{R}^2 \mid |x| < 1, |y| < 1\}$.
In this case, we have $\widetilde{\mathcal{S}} = \{(x,y) \in \mathbb{R}^2 \mid |x| \leq 1, |y| \leq 1\}$ and $V(\mathcal{S}) = \widetilde{V}(\mathcal{S}) = \mathbb{R}^2$.
Now $\{(x,y) \in \widetilde{\mathcal{S}} \mid y < x\}$ and $\{(x,y) \in \mathbb{R}^2 \mid x^2 + y^2 < 1\}$ are essential open subsets of $\widetilde{S}$, while $\mathcal{S}$ is an open subset of $\widetilde{\mathcal{S}}$ but is not essential (note that the interior and the closure in the definition of essential subsets are relative to $\widetilde{\mathcal{S}}$ rather than the underlying vector space $\widetilde{V}(\mathcal{S})$).
\end{example}

Here we show some basic properties of the essential subsets:
\begin{lemma}
\label{lem:essential_subset}
\begin{enumerate}
\item \label{item:essential_subset_1}
A subset $O$ of $\widetilde{\mathcal{S}}$ is open and essential if and only if $O^c \subset \widetilde{\mathcal{S}}$ is closed and essential.
\item \label{item:essential_subset_2}
For any $K \subset \mathcal{S}$ which is closed in $\mathcal{S}$, the set $C = \mathrm{cl}_{\widetilde{\mathcal{S}}}(K)$ is an essential closed subset of $\widetilde{\mathcal{S}}$ and $K = C \cap \mathcal{S}$.
\item \label{item:essential_subset_3}
For any $U \subset \mathcal{S}$ which is open in $\mathcal{S}$, the set $O = \mathrm{int}_{\widetilde{\mathcal{S}}}(U \cup \mathcal{S}^c)$ is an essential open subset of $\widetilde{\mathcal{S}}$ and $U = O \cap \mathcal{S}$.
\item \label{item:essential_subset_4}
For a closed subset $C$ of $\widetilde{\mathcal{S}}$, the following conditions are equivalent:
\begin{enumerate}
\item \label{item:essential_subset_4_a}
$C$ is essential.
\item \label{item:essential_subset_4_b}
$C = \mathrm{cl}_{\widetilde{S}}(K)$ for some $K \subset \mathcal{S}$ which is closed in $\mathcal{S}$.
\item \label{item:essential_subset_4_c}
$C$ is the intersection of all closed subsets $K$ of $\widetilde{\mathcal{S}}$ such that $K \cap \mathcal{S} = C \cap \mathcal{S}$.
\end{enumerate}
\item \label{item:essential_subset_5}
For an open subset $O$ of $\widetilde{\mathcal{S}}$, the following conditions are equivalent:
\begin{enumerate}
\item \label{item:essential_subset_5_a}
$O$ is essential.
\item \label{item:essential_subset_5_b}
$O = \mathrm{int}_{\widetilde{S}}(U \cup \mathcal{S}^c)$ for some $U \subset \mathcal{S}$ which is open in $\mathcal{S}$.
\item \label{item:essential_subset_5_c}
$O$ is the union of all open subsets $U$ of $\widetilde{\mathcal{S}}$ such that $U \cup \mathcal{S}^c = O \cup \mathcal{S}^c$.
\end{enumerate}
\end{enumerate}
\end{lemma}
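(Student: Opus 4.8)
The plan is to reduce the entire lemma to two elementary facts about the ambient compact space $\widetilde{\mathcal{S}}$ and its dense subset $\mathcal{S}$: the interior--closure duality $\mathrm{int}_{\widetilde{\mathcal{S}}}(A)^c = \mathrm{cl}_{\widetilde{\mathcal{S}}}(A^c)$, valid for every $A \subset \widetilde{\mathcal{S}}$, and the subspace--closure identity $\mathrm{cl}_{\widetilde{\mathcal{S}}}(B) \cap \mathcal{S} = \mathrm{cl}_{\mathcal{S}}(B)$, valid for every $B \subset \mathcal{S}$. No deeper structure of $\widetilde{\mathcal{S}}$ is needed; all five parts are purely set-theoretic manipulations built from these two identities, together with the fact that for $K$ closed in $\mathcal{S}$ one has $\mathrm{cl}_{\mathcal{S}}(K) = K$.

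For part 1, I would set $C = O^c$ and dualize. Taking complements gives $(O \cup \mathcal{S}^c)^c = O^c \cap \mathcal{S} = C \cap \mathcal{S}$, so by the duality $\mathrm{int}_{\widetilde{\mathcal{S}}}(O \cup \mathcal{S}^c) = \mathrm{cl}_{\widetilde{\mathcal{S}}}(C \cap \mathcal{S})^c$; hence the equation $\mathrm{int}_{\widetilde{\mathcal{S}}}(O \cup \mathcal{S}^c) = O$ is literally equivalent to $\mathrm{cl}_{\widetilde{\mathcal{S}}}(C \cap \mathcal{S}) = C$, the defining property of an essential closed set. This yields a perfect correspondence $O \leftrightarrow O^c$ between essential open and essential closed sets, which I then invoke throughout. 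Part 2 is immediate: $C = \mathrm{cl}_{\widetilde{\mathcal{S}}}(K)$ is closed, and the subspace identity gives $C \cap \mathcal{S} = \mathrm{cl}_{\mathcal{S}}(K) = K$, whence $\mathrm{cl}_{\widetilde{\mathcal{S}}}(C \cap \mathcal{S}) = \mathrm{cl}_{\widetilde{\mathcal{S}}}(K) = C$. Part 3 follows by complementation: with $K = \mathcal{S} \setminus U$ (closed in $\mathcal{S}$) I compute $O^c = \mathrm{cl}_{\widetilde{\mathcal{S}}}(U^c \cap \mathcal{S}) = \mathrm{cl}_{\widetilde{\mathcal{S}}}(K)$ via the duality, so $O$ is essential open by parts 1 and 2, and $O \cap \mathcal{S} = \mathcal{S} \setminus (\mathrm{cl}_{\widetilde{\mathcal{S}}}(K) \cap \mathcal{S}) = \mathcal{S} \setminus K = U$.

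For part 4, the implication (b)$\Rightarrow$(a) is exactly part 2, while (a)$\Rightarrow$(b) uses $K = C \cap \mathcal{S}$, which is closed in $\mathcal{S}$ since $C$ is closed in $\widetilde{\mathcal{S}}$. The equivalence with (c) I obtain by showing that the intersection of the family $\{K' \subset \widetilde{\mathcal{S}} : K'\text{ closed},\ K' \cap \mathcal{S} = C \cap \mathcal{S}\}$ equals $\mathrm{cl}_{\widetilde{\mathcal{S}}}(C \cap \mathcal{S})$: the latter set lies in the family (again by the subspace identity, since its trace on $\mathcal{S}$ is $\mathrm{cl}_{\mathcal{S}}(C \cap \mathcal{S}) = C \cap \mathcal{S}$) and is contained in every member, as any such $K'$ is closed and contains $C \cap \mathcal{S}$. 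Thus (c) reads $C = \mathrm{cl}_{\widetilde{\mathcal{S}}}(C \cap \mathcal{S})$, i.e., (a). Part 5 is the exact dual, which I deduce from part 4 through the correspondence of part 1, or prove directly by checking that the union of all open $U'$ with $U' \cup \mathcal{S}^c = O \cup \mathcal{S}^c$ equals $\mathrm{int}_{\widetilde{\mathcal{S}}}(O \cup \mathcal{S}^c)$; the crux there is the identity $\mathrm{int}_{\widetilde{\mathcal{S}}}(O \cup \mathcal{S}^c) \cup \mathcal{S}^c = O \cup \mathcal{S}^c$, which holds because $O \subset \mathrm{int}_{\widetilde{\mathcal{S}}}(O \cup \mathcal{S}^c) \subset O \cup \mathcal{S}^c$. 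There is no serious obstacle; the only real care is in keeping the complementation bookkeeping straight and applying the subspace--closure identity exclusively to subsets of $\mathcal{S}$. The most error-prone point is verifying that $\mathrm{cl}_{\widetilde{\mathcal{S}}}(C \cap \mathcal{S})$ genuinely belongs to the family in part 4(c), i.e., that it has the correct trace on $\mathcal{S}$, since this is where the subspace identity is used in its sharpest form.
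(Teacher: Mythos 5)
Your proposal is correct and follows essentially the same route as the paper's own proof: part 1 via the interior--closure duality, part 2 via the subspace--closure identity (which the paper verifies directly from a closed extension $K'$ rather than citing it), part 3 reduced to parts 1--2, part 4 built on part 2 with the intersection argument, and part 5 by complementation through part 1. Your packaging of 4(c) --- showing the intersection of the family always equals $\mathrm{cl}_{\widetilde{\mathcal{S}}}(C \cap \mathcal{S})$ so that (c) literally restates (a) --- is a slightly tidier organization of the paper's two separate implications, but the underlying argument is identical.
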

\begin{proof}
In the proof, we regard operations $A^o$, $\overline{A}$ and $A^c$ as being relative to $\widetilde{\mathcal{S}}$.

(\ref{item:essential_subset_1})
The claim follows from the relation $((O \cup \mathcal{S}^c)^o)^c = \overline{(O \cup \mathcal{S}^c)^c} = \overline{O^c \cap \mathcal{S}}$.

(\ref{item:essential_subset_2})
Choose a closed subset $K'$ of $\widetilde{\mathcal{S}}$ such that $K = K' \cap \mathcal{S}$.
Then we have $K \subset \overline{K} \cap \mathcal{S} \subset (\overline{K'} \cap \overline{\mathcal{S}}) \cap \mathcal{S} = K' \cap \mathcal{S} = K$, therefore $K = \overline{K} \cap \mathcal{S}$ and $\overline{K} = \overline{\overline{K} \cap \mathcal{S}}$.
Hence the claim holds.

(\ref{item:essential_subset_3})
The fact that $O$ is essential follows from the claims \ref{item:essential_subset_1} and \ref{item:essential_subset_2}.
Choose an open subset $U'$ of $\widetilde{\mathcal{S}}$ such that $U = U' \cap \mathcal{S}$.
Then we have $O \cap \mathcal{S} \subset (U \cap \mathcal{S}^c) \cap \mathcal{S} = U = U' \cap \mathcal{S}$, while $U' \subset (U' \cup \mathcal{S}^c)^o = ((U' \cap \mathcal{S}) \cup \mathcal{S}^c)^o = (U \cup \mathcal{S}^c)^o = O$, therefore $O \cap \mathcal{S} \subset U = U' \cap \mathcal{S} \subset O \cap \mathcal{S}$.
Hence we have $O \cap \mathcal{S} = U$, therefore the claim holds.

(\ref{item:essential_subset_4})
The conditions (a) and (b) are equivalent by the definition and the claim \ref{item:essential_subset_2}.
For the implication (a),(b) $\Rightarrow$ (c), $C = \overline{K}$ satisfies that if $K'$ is closed in $\widetilde{\mathcal{S}}$ and $K' \cap \mathcal{S} = C \cap \mathcal{S}$, then $C \cap \mathcal{S} \subset K'$ and $C = \overline{C \cap \mathcal{S}} \subset K'$.
Hence this implication holds.
For the remaining implication (c) $\Rightarrow$ (a), the set $K = \overline{C \cap \mathcal{S}}$ satisfies $K \cap \mathcal{S} = C \cap \mathcal{S}$ by the claim \ref{item:essential_subset_2}, therefore $C \subset \overline{C \cap \mathcal{S}}$.
As $C \cap \mathcal{S} \subset C$ and $C$ is closed, we have $\overline{C \cap \mathcal{S}} = C$, therefore this implication holds.
Hence the three conditions are equivalent.

(\ref{item:essential_subset_5})
By virtue of the claim \ref{item:essential_subset_1}, the claim is derived from the claim \ref{item:essential_subset_4} by taking the complement (relative to $\widetilde{\mathcal{S}}$) of the sets appearing in each statement (note that for the condition (b), we have $C^c = \overline{K}^c = (K^c)^o = (\mathcal{S}^c \cup (\mathcal{S} \setminus K))^o$, and $U = \mathcal{S} \setminus K$ is open in $\mathcal{S}$).
\end{proof}
We also give the following two auxiliary results for later use:
\begin{lemma}
\label{lem:replace_with_essential}
Let $O$ and $C$ be an open and a closed subsets of $\widetilde{\mathcal{S}}$, respectively, such that $O \subset C$.
Then there are an essential open subset $O'$ and an essential closed subset $C'$ of $\widetilde{\mathcal{S}}$, respectively, such that $O \subset O' \subset C' \subset C$.
Moreover, this $C'$ can be chosen to be convex if $C$ is convex.
\end{lemma}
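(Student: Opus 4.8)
The plan is to build $O'$ and $C'$ directly from the traces of the given sets on $\mathcal{S}$, reading off essentiality from the explicit descriptions in Lemma \ref{lem:essential_subset}. Throughout I regard $(\cdot)^o$, $\overline{(\cdot)}$ and $(\cdot)^c$ as taken relative to $\widetilde{\mathcal{S}}$, and I write $\mathrm{cl}_{\mathcal{S}}$ for closure inside $\mathcal{S}$. The key topological input I would record at the outset is that $\mathcal{S}$ is dense in $\widetilde{\mathcal{S}}$: indeed $\widetilde{\mathcal{S}} = \mathrm{cl}_{\widetilde{V}(\mathcal{S})}(\mathcal{S})$ by Proposition \ref{prop:minimal_framework}. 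Consequently every non-empty open $W \subset \widetilde{\mathcal{S}}$ meets $\mathcal{S}$, and hence $W \subset \overline{W \cap \mathcal{S}}$; this single observation drives the inclusion $O' \subset C'$ below.

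For the open set I would set $U = O \cap \mathcal{S}$, which is open in $\mathcal{S}$, and define $O' = (U \cup \mathcal{S}^c)^o$. By Lemma \ref{lem:essential_subset}(\ref{item:essential_subset_3}) this $O'$ is an essential open subset of $\widetilde{\mathcal{S}}$ with $O' \cap \mathcal{S} = U = O \cap \mathcal{S}$. The inclusion $O \subset O'$ then follows from the set identity $U \cup \mathcal{S}^c = (O \cap \mathcal{S}) \cup \mathcal{S}^c = (O \cup \mathcal{S}^c) \cap (\mathcal{S} \cup \mathcal{S}^c) = O \cup \mathcal{S}^c$, since $O$ is open and contained in $O \cup \mathcal{S}^c$, hence in its interior $O' = (O \cup \mathcal{S}^c)^o$.

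For the closed set I would take a convex hull only when it is needed: put $K = O \cap \mathcal{S}$ in general, and $K = \mathrm{conv}(O \cap \mathcal{S})$ in the case that $C$ is convex, noting that in the latter case $K \subset \mathcal{S}$ because $\mathcal{S}$ is convex. In either case $K$ is a subset of $\mathcal{S}$, so $C' := \overline{K}$ is essential closed: applying Lemma \ref{lem:essential_subset}(\ref{item:essential_subset_2}) to the set $\mathrm{cl}_{\mathcal{S}}(K)$, which is closed in $\mathcal{S}$, gives that $\overline{\mathrm{cl}_{\mathcal{S}}(K)} = \overline{K} = C'$ is essential (here $\overline{\mathrm{cl}_{\mathcal{S}}(K)} = \overline{K}$ because $K \subset \overline{K} \cap \mathcal{S} = \mathrm{cl}_{\mathcal{S}}(K) \subset \overline{K}$). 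The two sandwich inclusions are now immediate. Since $O \cap \mathcal{S} \subset K$ and $\mathcal{S}$ is dense, $O' \subset \overline{O' \cap \mathcal{S}} = \overline{O \cap \mathcal{S}} \subset \overline{K} = C'$; and since $K \subset C$ (directly when $K = O \cap \mathcal{S} \subset O \subset C$, and because $C$ is convex and contains $O \cap \mathcal{S}$ when $K = \mathrm{conv}(O \cap \mathcal{S})$) while $C$ is closed, we get $C' = \overline{K} \subset C$. Finally, when $C$ is convex the chosen $K = \mathrm{conv}(O \cap \mathcal{S})$ is convex, and the closure of a convex set in the topological vector space $\widetilde{V}(\mathcal{S})$ is convex; as $\overline{K} \subset \widetilde{\mathcal{S}}$ this closure coincides with $C'$, so $C'$ is convex as required.

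The construction is essentially forced and the individual verifications are routine; the only point demanding a moment of care, and hence the main (mild) obstacle, is the convex case, where $K$ must be kept simultaneously inside $\mathcal{S}$ (to guarantee essentiality via Lemma \ref{lem:essential_subset}) and inside $C$ (to guarantee $C' \subset C$). This is exactly what the convexity of $\mathcal{S}$ and of $C$ secures: $\mathrm{conv}(O \cap \mathcal{S})$ remains within $\mathcal{S} \cap C$, after which passing to the closure both preserves convexity and respects the inclusion into the closed set $C$.
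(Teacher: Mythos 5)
Your proof is correct; all the inclusions and essentiality claims check out. It follows the same basic philosophy as the paper (take traces on $\mathcal{S}$, exploit density of $\mathcal{S}$ in $\widetilde{\mathcal{S}}$, and read off essentiality from Lemma \ref{lem:essential_subset}), but the actual constructions differ in two respects. First, the paper builds the closed set from the trace of $C$, setting $C' = \mathrm{cl}_{\widetilde{\mathcal{S}}}(C \cap \mathcal{S})$, and then obtains $O'$ by a duality trick: it re-applies the closed-set construction to the complementary pair $C'{}^c \subset O^c$, producing an essential closed $K$ with $C'{}^c \subset K \subset O^c$, and sets $O' = K^c$ (essential open by Lemma \ref{lem:essential_subset}(\ref{item:essential_subset_1})). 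You instead build \emph{both} sets from the trace of $O$, writing $O'$ down explicitly via Lemma \ref{lem:essential_subset}(\ref{item:essential_subset_3}) and taking $C'$ to be the closure of $O \cap \mathcal{S}$ (or of its convex hull). Second, and this is where the paper's choice pays off: since $C \cap \mathcal{S}$ is an intersection of convex sets, its closure is automatically convex when $C$ is, so the paper gets the convexity clause for free; your $\mathrm{cl}_{\widetilde{\mathcal{S}}}(O \cap \mathcal{S})$ need not be convex, which forces you to insert $\mathrm{conv}(O \cap \mathcal{S})$ and verify the two extra containments (in $\mathcal{S}$, by convexity of $\mathcal{S}$, and in $C$, by convexity of $C$) — you do this correctly, and your remark that this is the one point requiring care is accurate. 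What your route buys in exchange is a more explicit $O'$ (no double-complement detour) and a tighter sandwich, since your $C'$ is contained in the paper's; neither advantage matters for the statement, so the two proofs are of essentially equal weight.
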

\begin{proof}
First, put $C' = \overline{C \cap \mathcal{S}}$, which is an essential closed subset (by Lemma \ref{lem:essential_subset}) and satisfies that $C' \subset C$, as $C \cap \mathcal{S} \subset C$ and $C$ is closed.
If $C$ is convex, then $C \cap \mathcal{S}$ is also convex, therefore its closure $C'$ is also convex (see e.g., \cite[Section II.1, Theorem 1.2]{tvs}).
We show that $O \subset C'$.
Let $x \in O$ and $W$ an open neighborhood of $x$.
Put $W' = W \cap O$, which is also an open neighborhood of $x$.
Then we have $W' \subset O \subset C$, while $W' \cap \mathcal{S} \neq \emptyset$ as $\mathcal{S}$ is dense in $\widetilde{\mathcal{S}}$.
This implies that $\emptyset \neq W' \cap \mathcal{S} = W' \cap C \cap \mathcal{S} \subset W \cap C \cap \mathcal{S}$, therefore $W \cap (C \cap \mathcal{S}) \neq \emptyset$.
Hence we have $x \in \overline{C \cap \mathcal{S}} = C'$, therefore $O \subset C'$ as desired.
For the remaining claim, by applying the above argument to the pair $C'{}^c \subset O^c$, we have $C'{}^c \subset K \subset O^c$ for some essential closed subset $K$, therefore $O \subset K^c \subset C'$ and $O' = K^c$ is the desired essential open subset by Lemma \ref{lem:essential_subset}.
\end{proof}
\begin{lemma}
\label{lem:closed_convex_neighborhood}
Let $\widehat{\mathcal{S}} \in \{\mathcal{S},\widetilde{\mathcal{S}}\}$.
Then for any $s \in \widehat{\mathcal{S}}$ and any open neighborhood $U$ of $s$ in $\widehat{\mathcal{S}}$, there is a convex closed subset $C$ of $\widehat{\mathcal{S}}$ and a convex open subset $O$ of $\widehat{\mathcal{S}}$ such that $s \in O \subset C \subset U$.
Moreover, in the case $\widehat{\mathcal{S}} = \widetilde{\mathcal{S}}$, this $C$ can be chosen to be essential, and there is an essential open subset $O'$ of $\widetilde{\mathcal{S}}$ such that $O \subset O' \subset C$.
\end{lemma}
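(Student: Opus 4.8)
The plan is to treat both cases $\widehat{\mathcal{S}} = \mathcal{S}$ and $\widehat{\mathcal{S}} = \widetilde{\mathcal{S}}$ simultaneously for the first assertion, exploiting the fact that in each case $\widehat{\mathcal{S}}$ is a convex subset, carrying the subspace topology, of a locally convex Hausdorff topological vector space --- namely $V(\mathcal{S})$ in the former case and $\widetilde{V}(\mathcal{S})$ in the latter (cf.\ Proposition \ref{prop:minimal_framework}). Writing $\widehat{V}$ for this ambient space, I would build the required convex neighborhoods inside $\widehat{V}$ and then intersect them with $\widehat{\mathcal{S}}$; since intersecting a convex (resp.\ open, resp.\ closed) subset of $\widehat{V}$ with $\widehat{\mathcal{S}}$ yields a convex (resp.\ relatively open, resp.\ relatively closed) subset of $\widehat{\mathcal{S}}$, this reduces everything to a statement about the locally convex space $\widehat{V}$.

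Concretely, first I would lift $U$ to an open set $\widetilde{U}$ of $\widehat{V}$ with $U = \widetilde{U} \cap \widehat{\mathcal{S}}$ and $s \in \widetilde{U}$, and invoke local convexity to obtain a convex open neighborhood $W$ of $s$ in $\widehat{V}$ with $W \subset \widetilde{U}$. The only point requiring care is to squeeze a closed convex set in between: using the standard fact that a neighborhood $W_1$ of the origin satisfies $\overline{W_1} \subset W_1 + W_1$, I would pick a convex open neighborhood $W_1$ of the origin with $W_1 + W_1 \subset W - s$, so that $\widetilde{O} := s + W_1$ is convex open and $\widetilde{C} := s + \overline{W_1}$ is convex closed with $s \in \widetilde{O} \subset \widetilde{C} \subset W \subset \widetilde{U}$ (here $\overline{W_1}$ is convex, being the closure of a convex set). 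Setting $O := \widetilde{O} \cap \widehat{\mathcal{S}}$ and $C := \widetilde{C} \cap \widehat{\mathcal{S}}$ then yields a convex open $O$ and a convex closed $C$ in $\widehat{\mathcal{S}}$ with $s \in O \subset C \subset U$, establishing the first assertion in both cases.

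For the \emph{moreover} part I would specialize to $\widehat{\mathcal{S}} = \widetilde{\mathcal{S}}$ and feed the pair $O \subset C$ just produced into Lemma \ref{lem:replace_with_essential}. That lemma supplies an essential open subset $O'$ and an essential closed subset $C'$ of $\widetilde{\mathcal{S}}$ with $O \subset O' \subset C' \subset C$, and moreover allows $C'$ to be taken convex because $C$ is convex. Replacing the closed set $C$ by $C'$ --- which is now simultaneously convex, closed, and essential --- I obtain the chain $s \in O \subset O' \subset C' \subset C \subset U$; reading off $O$ (convex open), the new $C := C'$ (essential convex closed), and $O'$ (essential open) yields exactly the asserted configuration $s \in O \subset O' \subset C$ together with $C \subset U$.

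I expect the main (though still modest) obstacle to lie entirely in the first assertion, namely the insertion of a closed convex neighborhood inside the open one: this is where local convexity of $\widehat{V}$ is genuinely used, via the neighborhood-of-origin estimate above, and one must check that passing to $\widehat{\mathcal{S}}$ preserves all containments together with convexity, openness, and closedness. By contrast, the essentiality in the second assertion requires no new work, being delegated wholesale to the already-established Lemma \ref{lem:replace_with_essential}; note in particular that $O'$ is only required to be essential open, not convex, so no tension arises between the convexity of $O$ and $C$ and the essentiality of the interpolating sets.
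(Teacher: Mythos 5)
Your proof is correct, but the construction of the convex open/closed pair takes a genuinely different route from the paper's. The paper works with the explicit weak-topology description in Proposition \ref{prop:minimal_framework}: a basic neighborhood of $s$ is a finite intersection $\bigcap_{i} f_i^{-1}(I_i)$ of preimages of (relatively) open intervals, where the $f_i$ are affine functionals $f_i:\mathcal{S}\to[0,1]$ in the case $\widehat{\mathcal{S}}=\mathcal{S}$, and continuous linear functionals on $\widetilde{V}(\mathcal{S})$ in the case $\widehat{\mathcal{S}}=\widetilde{\mathcal{S}}$; it then shrinks each $I_i$ to an open interval $J_i$ with $f_i(s)\in J_i$ and $\overline{J_i}\subset I_i$, and takes $O=\bigcap_i f_i^{-1}(J_i)$ and $C=\bigcap_i f_i^{-1}(\overline{J_i})$, convexity coming from the convexity of intervals and the affineness of the $f_i$. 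You instead argue abstractly in the ambient space: local convexity supplies the convex open $W$, and the standard TVS estimate $\overline{W_1}\subset W_1+W_1$ for a neighborhood $W_1$ of the origin (valid in any topological vector space, by continuity of addition and the fact that $-W_1$ is again a neighborhood of $0$) inserts the closed convex set, after which you intersect with $\widehat{\mathcal{S}}$. Both arguments are sound and both ultimately rest on Proposition \ref{prop:minimal_framework}, but via different bullets: yours uses only that $V(\mathcal{S})$ and $\widetilde{V}(\mathcal{S})$ are locally convex Hausdorff and that $\mathcal{S}$, $\widetilde{\mathcal{S}}$ carry the subspace topologies, so it is shorter, treats the two cases uniformly, and would apply verbatim to any convex subset of a locally convex Hausdorff space; the paper's is more concrete, exhibiting $O$ and $C$ as finite intersections of functional preimages, which is self-contained given the weak-topology characterization it has already established. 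Your handling of the ``moreover'' clause --- feeding the pair $O\subset C$ into Lemma \ref{lem:replace_with_essential}, keeping $O$, and replacing $C$ by the convex essential closed $C'$ with the essential open $O'$ interpolated --- is exactly what the paper does (in one line), and your observation that $O'$ need not be convex correctly defuses the only potential tension there.
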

\begin{proof}
Recall that every open subset of $\mathbb{R}$ is the union of open intervals.
First we consider the case $\widehat{\mathcal{S}} = \mathcal{S}$.
By the property of the subspace topology on $\mathcal{S}$ relative to $V(\mathcal{S})$ (see Proposition \ref{prop:minimal_framework}), there are a finite number (say, $k$) of affine maps $f_i:\mathcal{S} \to [0,1]$ and intervals $I_i \subset [0,1]$, $1 \leq i \leq k$, such that $I_i$ are relatively open in $[0,1]$ and $s \in \bigcap_{i = 1}^{k} f_i^{-1}(I_i) \subset U$.
Now each $I_i$ is non-empty and the two endpoints of $I_i$ are different (as $I_i$ is relatively open in $[0,1]$), therefore $I_i$ contains an interval $J_i$ such that $J_i$ is also relatively open in $[0,1]$, $f_i(s) \in J_i$ and the closure $\overline{J_i}$ of $J_i$ relative to $[0,1]$ is also contained in $I_i$.
This implies that $s \in \bigcap_{i = 1}^{k} f_i^{-1}(J_i) \subset \bigcap_{i = 1}^{k} f_i^{-1}(\overline{J_i}) \subset \bigcap_{i = 1}^{k} f_i^{-1}(I_i) \subset U$, while $f_i^{-1}(J_i)$ is open and convex and $f_i^{-1}(\overline{J_i})$ is closed and convex (as $f_i$ is continuous and affine, and both $J_i$ and $\overline{J_i}$ are convex), therefore $C = \bigcap_{i = 1}^{k} f_i^{-1}(\overline{J_i})$ and $O = \bigcap_{i = 1}^{k} f_i^{-1}(J_i)$ satisfy the desired conditions.

The proof for the case $\widehat{\mathcal{S}} = \widetilde{\mathcal{S}}$ is similar.
By the property of the topology on $\widetilde{V}(\mathcal{S})$, there are a finite number (say, $k$) of continuous linear maps $f_i:\widetilde{V}(\mathcal{S}) \to \mathbb{R}$ and open intervals $I_i \subset \mathbb{R}$, $1 \leq i \leq k$, such that $s \in \widetilde{\mathcal{S}} \cap \bigcap_{i = 1}^{k} f_i^{-1}(I_i) \subset U$.
Now each $I_i$ contains an open interval $J_i$ such that $f_i(s) \in J_i$ and $\overline{J_i} \subset I_i$.
This implies that $s \in \widetilde{\mathcal{S}} \cap \bigcap_{i = 1}^{k} f_i^{-1}(J_i) \subset \widetilde{\mathcal{S}} \cap \bigcap_{i = 1}^{k} f_i^{-1}(\overline{J_i}) \subset U$, while $f_i^{-1}(J_i)$ is open and convex and $f_i^{-1}(\overline{J_i})$ is closed and convex by similar reasons.
Therefore $C = \widetilde{\mathcal{S}} \cap \bigcap_{i = 1}^{k} f_i^{-1}(\overline{J_i})$ and $O = \widetilde{\mathcal{S}} \cap \bigcap_{i = 1}^{k} f_i^{-1}(J_i)$ satisfy that $C$ is convex and closed, $O$ is convex and open, and $s \in O \subset C \subset U$.
The remaining claim now follows from Lemma \ref{lem:replace_with_essential}.
\end{proof}

\section{On the sets of affine maps on convex sets in general setting}
\label{sec:dynamics_topology}

In this section, we study the structural properties of the set $\mathcal{A}(\mathcal{S}_1,\mathcal{S}_2)$ of affine maps from $\mathcal{S}_1$ to $\mathcal{S}_2$, especially its topological properties.

\subsection{General cases}
\label{subsec:dynamics_topology_general}

First, we notice that the set $\mathcal{A}(\mathcal{S}_1,\mathcal{S}_2)$ also admits a natural convex structure.
Namely, for each $f,g \in \mathcal{A}(\mathcal{S}_1,\mathcal{S}_2)$ and $\lambda \in [0,1]$, we have $(\lambda f + (1-\lambda) g)(s) = \lambda f(s) + (1-\lambda) g(s) \in \mathcal{S}_2$ for every $s \in \mathcal{S}_1$, therefore $\lambda f + (1-\lambda) g \in \mathcal{A}(\mathcal{S}_1,\mathcal{S}_2)$.
This convex structure is compatible with composition of maps, namely we have
\begin{equation}
\begin{split}
h \circ (\lambda f + (1-\lambda) g) &= \lambda (h \circ f) + (1-\lambda) (h \circ g) \enspace, \\
(\lambda f + (1-\lambda) g) \circ h' &= \lambda (f \circ h') + (1-\lambda) (g \circ h')
\end{split}
\end{equation}
for each $f,g \in \mathcal{A}(\mathcal{S}_1,\mathcal{S}_2)$, $\lambda \in [0,1]$, $h \in \mathcal{A}(\mathcal{S}_2,\mathcal{S}_3)$, and $h' \in \mathcal{A}(\mathcal{S}_0,\mathcal{S}_1)$.
Moreover, each map in $\mathcal{A}(\mathcal{S}_1,\mathcal{S}_2)$ is continuous, namely we have:
\begin{lemma}
\label{lem:affine_is_continuous}
We have $\mathcal{A}(\mathcal{S}_1,\mathcal{S}_2) \subset C(\mathcal{S}_1,\mathcal{S}_2)$.
\end{lemma}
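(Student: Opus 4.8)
The plan is to exploit the weak-topology descriptions of the topologies on $\mathcal{S}_1$ and $\mathcal{S}_2$ furnished by Proposition \ref{prop:minimal_framework}, so that continuity reduces to a formal property of initial topologies rather than any estimate. Recall from that proposition that the original topology on $\mathcal{S}_2$ coincides with the weak topology induced by the family $\mathcal{G}$ of all affine functions $g:\mathcal{S}_2 \to [0,1]$. In other words, the topology on $\mathcal{S}_2$ is the initial (coarsest) topology making every $g \in \mathcal{G}$ continuous. By the universal property of an initial topology, a map $f:\mathcal{S}_1 \to \mathcal{S}_2$ is continuous if and only if $g \circ f:\mathcal{S}_1 \to [0,1]$ is continuous for every $g \in \mathcal{G}$. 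Hence the whole problem reduces to verifying that each composite $g \circ f$ is continuous.

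Next I would observe that $g \circ f$ is itself an affine function from $\mathcal{S}_1$ to $[0,1]$: since $f:\mathcal{S}_1 \to \mathcal{S}_2$ is affine and $g:\mathcal{S}_2 \to [0,1]$ is affine, their composite preserves convex combinations, and its image lies in $[0,1]$ because $g$ takes values there. Now apply Proposition \ref{prop:minimal_framework} once more, this time to $\mathcal{S}_1$: its original topology is the weak topology induced by the family of \emph{all} affine functions $\mathcal{S}_1 \to [0,1]$. Since $g \circ f$ is a member of precisely this defining family, it is continuous by construction of the initial topology (every generating map of an initial topology is continuous). Combining this with the previous paragraph yields that $f$ is continuous, establishing $\mathcal{A}(\mathcal{S}_1,\mathcal{S}_2) \subset C(\mathcal{S}_1,\mathcal{S}_2)$.

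I do not anticipate a genuine obstacle here; the argument is a two-step application of the universal property of weak topologies, bracketed by the trivial remark that affine functions compose to affine functions. The only point requiring a moment of care is to use Proposition \ref{prop:minimal_framework} on both ends simultaneously, namely to read off the weak-topology description of the \emph{codomain} $\mathcal{S}_2$ to set up the reduction, and the weak-topology description of the \emph{domain} $\mathcal{S}_1$ to conclude continuity of the scalar composites $g \circ f$. No boundedness estimate, no passage to the completions $\widetilde{V}(\mathcal{S}_i)$, and no appeal to Lemma \ref{lem:extends_to_affine} are needed for this statement.
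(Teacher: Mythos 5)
Your proposal is correct and is essentially identical to the paper's own proof: both reduce continuity of $f$ to continuity of the composites $e \circ f$ with affine functionals $e:\mathcal{S}_2 \to [0,1]$ via the weak-topology characterization in Proposition \ref{prop:minimal_framework}, and then observe that $e \circ f \in \mathcal{A}(\mathcal{S}_1,[0,1])$ belongs to the family defining the weak topology on $\mathcal{S}_1$. The only difference is that you spell out the universal property of initial topologies explicitly, which the paper leaves implicit.
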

\begin{proof}
Recall from Proposition \ref{prop:minimal_framework} that the topology of $\mathcal{S}_i$ ($i = 1,2$) is the weak topology induced by the set $\mathcal{A}(\mathcal{S}_i,[0,1])$ of mappings.
Let $f \in \mathcal{A}(\mathcal{S}_1,\mathcal{S}_2)$.
Then for each $e \in \mathcal{A}(\mathcal{S}_2,[0,1])$, we have $e \circ f \in \mathcal{A}(\mathcal{S}_1,[0,1])$, therefore $e \circ f$ is continuous.
By the definition of weak topology, this implies that $f$ is continuous.
Hence the claim holds.
\end{proof}
By this lemma, when $\mathcal{S}_1$ and $\mathcal{S}_2$ are compact (hence $\widetilde{\mathcal{S}}_1 = \mathcal{S}_1$ and $\widetilde{\mathcal{S}}_2 = \mathcal{S}_2$), the set $\mathcal{A}^c(\widetilde{\mathcal{S}}_1,\widetilde{\mathcal{S}}_2)$ of all {\em continuous} affine maps $\widetilde{\mathcal{S}}_1 \to \widetilde{\mathcal{S}}_2$ coincides with $\mathcal{A}(\mathcal{S}_1,\mathcal{S}_2)$.

Let $\widetilde{\mathcal{A}}(\widetilde{\mathcal{S}}_1,\widetilde{\mathcal{S}}_2)$ denote the set of all $f \in \mathcal{A}^c(\widetilde{\mathcal{S}}_1,\widetilde{\mathcal{S}}_2)$ such that $f(\mathcal{S}_1) \subset \mathcal{S}_2$.
Then we have $\widetilde{\mathcal{A}}(\widetilde{\mathcal{S}}_1,\widetilde{\mathcal{S}}_2) \subset \mathcal{A}^c(\widetilde{\mathcal{S}}_1,\widetilde{\mathcal{S}}_2) \subset C(\widetilde{\mathcal{S}}_1,\widetilde{\mathcal{S}}_2)$ by the definition, while we have $\widetilde{\mathcal{A}}(\widetilde{\mathcal{S}}_1,\widetilde{\mathcal{S}}_2) = \mathcal{A}^c(\widetilde{\mathcal{S}}_1,\widetilde{\mathcal{S}}_2)$ if $\mathcal{S}_2$ is compact (i.e., $\widetilde{\mathcal{S}}_2 = \mathcal{S}_2$).
Now we have the following property:
\begin{lemma}
\label{lem:dynamics_virtual_states}
Each $f \in \mathcal{A}(\mathcal{S}_1,\mathcal{S}_2)$ extends to a unique $\widetilde{f} \in \widetilde{\mathcal{A}}(\widetilde{\mathcal{S}}_1,\widetilde{\mathcal{S}}_2)$, and every element of $\widetilde{\mathcal{A}}(\widetilde{\mathcal{S}}_1,\widetilde{\mathcal{S}}_2)$ is obtained from some $f \in \mathcal{A}(\mathcal{S}_1,\mathcal{S}_2)$ in this manner.
\end{lemma}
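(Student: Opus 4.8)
The plan is to reduce everything to the two extension results already established, namely Lemma \ref{lem:affine_is_continuous} (every affine map is continuous) and Lemma \ref{lem:extends_to_affine} (a continuous affine map into $\widetilde{\mathcal{S}}_2$ extends uniquely to $\widetilde{\mathcal{S}}_1$), together with the facts that $\mathcal{S}_1$ is dense in $\widetilde{\mathcal{S}}_1$ and that $\widetilde{\mathcal{S}}_2$ is Hausdorff.

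For the existence part I would start from an arbitrary $f \in \mathcal{A}(\mathcal{S}_1,\mathcal{S}_2)$. By Lemma \ref{lem:affine_is_continuous} the map $f$ is continuous as a map $\mathcal{S}_1 \to \mathcal{S}_2$; composing with the inclusion $\mathcal{S}_2 \hookrightarrow \widetilde{\mathcal{S}}_2$, which is a subspace inclusion and hence continuous by Proposition \ref{prop:minimal_framework}, I obtain a continuous affine map $\mathcal{S}_1 \to \widetilde{\mathcal{S}}_2$. Lemma \ref{lem:extends_to_affine} then produces a continuous affine extension $\widetilde{f}:\widetilde{\mathcal{S}}_1 \to \widetilde{\mathcal{S}}_2$ with $\widetilde{f}|_{\mathcal{S}_1} = f$, so that $\widetilde{f} \in \mathcal{A}^c(\widetilde{\mathcal{S}}_1,\widetilde{\mathcal{S}}_2)$. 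Since $\widetilde{f}(\mathcal{S}_1) = f(\mathcal{S}_1) \subset \mathcal{S}_2$, the map $\widetilde{f}$ in fact lies in $\widetilde{\mathcal{A}}(\widetilde{\mathcal{S}}_1,\widetilde{\mathcal{S}}_2)$, as required. For uniqueness I would note that any two members of $\widetilde{\mathcal{A}}(\widetilde{\mathcal{S}}_1,\widetilde{\mathcal{S}}_2)$ extending $f$ agree on the dense subset $\mathcal{S}_1$ and are continuous into the Hausdorff space $\widetilde{\mathcal{S}}_2$, hence coincide; this is precisely the uniqueness already recorded in Lemma \ref{lem:extends_to_affine}.

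For surjectivity of the extension correspondence I would take an arbitrary $g \in \widetilde{\mathcal{A}}(\widetilde{\mathcal{S}}_1,\widetilde{\mathcal{S}}_2)$ and restrict it, setting $f := g|_{\mathcal{S}_1}$. The restriction of an affine map to a convex subset is again affine, and by the very definition of $\widetilde{\mathcal{A}}(\widetilde{\mathcal{S}}_1,\widetilde{\mathcal{S}}_2)$ we have $g(\mathcal{S}_1) \subset \mathcal{S}_2$, so $f \in \mathcal{A}(\mathcal{S}_1,\mathcal{S}_2)$; its unique extension is then $g$ itself by the uniqueness just established, i.e. $g = \widetilde{f}$.

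Since the genuinely hard step—the existence of a continuous extension across the completion $\widetilde{V}(\mathcal{S}_1)$—has already been carried out in Lemma \ref{lem:extends_to_affine}, I do not anticipate any serious obstacle. The only points needing a little care are verifying that the inclusion $\mathcal{S}_2 \hookrightarrow \widetilde{\mathcal{S}}_2$ is continuous so that Lemma \ref{lem:extends_to_affine} genuinely applies, and keeping track of the distinction that it is the target condition $\widetilde{f}(\mathcal{S}_1) \subset \mathcal{S}_2$ (rather than merely $\widetilde{f}(\mathcal{S}_1) \subset \widetilde{\mathcal{S}}_2$) which places $\widetilde{f}$ inside $\widetilde{\mathcal{A}}(\widetilde{\mathcal{S}}_1,\widetilde{\mathcal{S}}_2)$ rather than just $\mathcal{A}^c(\widetilde{\mathcal{S}}_1,\widetilde{\mathcal{S}}_2)$.
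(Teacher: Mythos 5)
Your proposal is correct and follows essentially the same route as the paper's own proof: invoke Lemma \ref{lem:affine_is_continuous} to get continuity, apply Lemma \ref{lem:extends_to_affine} for the unique continuous affine extension, observe $\widetilde{f}(\mathcal{S}_1) = f(\mathcal{S}_1) \subset \mathcal{S}_2$, and note that restriction gives the converse correspondence. The paper merely compresses the last step to ``the remaining claim is obvious,'' which you spell out explicitly; there is no substantive difference.
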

\begin{proof}
By Lemma \ref{lem:extends_to_affine} and Lemma \ref{lem:affine_is_continuous}, each $f \in \mathcal{A}(\mathcal{S}_1,\mathcal{S}_2)$ extends to a unique continuous affine map $\widetilde{f}:\widetilde{\mathcal{S}}_1 \to \widetilde{\mathcal{S}}_2$, and we have $\widetilde{f}(\mathcal{S}_1) = f(\mathcal{S}_1) \subset \mathcal{S}_2$.
The remaining claim is obvious.
\end{proof}
Hence the set $\mathcal{A}(\mathcal{S}_1,\mathcal{S}_2)$ can be identified via the map $f \mapsto \widetilde{f}$ with the subset $\widetilde{\mathcal{A}}(\widetilde{\mathcal{S}}_1,\widetilde{\mathcal{S}}_2)$ of $\mathcal{A}^c(\widetilde{\mathcal{S}}_1,\widetilde{\mathcal{S}}_2)$.
Note that $\widetilde{\mathcal{A}}(\widetilde{\mathcal{S}}_1,\widetilde{\mathcal{S}}_2)$ also admits a natural convex structure, as $\widetilde{f},\widetilde{g} \in \widetilde{\mathcal{A}}(\widetilde{\mathcal{S}}_1,\widetilde{\mathcal{S}}_2)$ and $\lambda \in [0,1]$ imply that $\lambda \widetilde{f} + (1-\lambda) \widetilde{g} \in \widetilde{\mathcal{A}}(\widetilde{\mathcal{S}}_1,\widetilde{\mathcal{S}}_2)$ (for the continuity of $\lambda \widetilde{f} + (1-\lambda) \widetilde{g}$, note that $\widetilde{f},\widetilde{g}$ and the operation on $\widetilde{\mathcal{S}}_1$ taking a convex combination of two elements are all continuous).
By virtue of the uniqueness property in Lemma \ref{lem:dynamics_virtual_states}, this convex structure and the correspondence $f \mapsto \widetilde{f}$ are compatible with convex structure on $\mathcal{A}(\mathcal{S}_1,\mathcal{S}_2)$, composition of mappings and some relevant objects.
Precisely, the following properties hold:
\begin{itemize}
\item If $h = \lambda f + (1-\lambda) g$, $f,g \in \mathcal{A}(\mathcal{S}_1,\mathcal{S}_2)$, then we have $\widetilde{h} = \lambda \widetilde{f} + (1-\lambda) \widetilde{g}$.
\item If $f \in \mathcal{A}(\mathcal{S}_1,\mathcal{S}_2)$, $g \in \mathcal{A}(\mathcal{S}_2,\mathcal{S}_3)$ and $h = g \circ f$, then $\widetilde{h} = \widetilde{g} \circ \widetilde{f}$.
\item We have $\widetilde{h} \circ (\lambda \widetilde{f} + (1-\lambda) \widetilde{g}) = \lambda (\widetilde{h} \circ \widetilde{f}) + (1-\lambda) (\widetilde{h} \circ \widetilde{g})$ for any $\widetilde{f},\widetilde{g} \in \widetilde{\mathcal{A}}(\widetilde{\mathcal{S}}_1,\widetilde{\mathcal{S}}_2)$, $\lambda \in [0,1]$ and $\widetilde{h} \in \widetilde{\mathcal{A}}(\widetilde{\mathcal{S}}_2,\widetilde{\mathcal{S}}_3)$.
\item We have $(\lambda \widetilde{f} + (1-\lambda) \widetilde{g}) \circ \widetilde{h} = \lambda (\widetilde{f} \circ \widetilde{h}) + (1-\lambda) (\widetilde{g} \circ \widetilde{h})$ for any $\widetilde{f},\widetilde{g} \in \widetilde{\mathcal{A}}(\widetilde{\mathcal{S}}_1,\widetilde{\mathcal{S}}_2)$, $\lambda \in [0,1]$ and $\widetilde{h} \in \widetilde{\mathcal{A}}(\widetilde{\mathcal{S}}_0,\widetilde{\mathcal{S}}_1)$.
\item We have $\widetilde{\mathrm{id}_{\mathcal{S}}} = \mathrm{id}_{\widetilde{\mathcal{S}}}$, where $\mathrm{id}_X$ denotes the identity map on a set $X$.
\item If $g \in \mathcal{A}(\mathcal{S}_2,\mathcal{S}_1)$ is a right (resp., left) inverse of $f \in \mathcal{A}(\mathcal{S}_1,\mathcal{S}_2)$, namely $f \circ g = \mathrm{id}_{\mathcal{S}_2}$ (resp., $g \circ f = \mathrm{id}_{\mathcal{S}_1}$), then $\widetilde{g} \in \widetilde{\mathcal{A}}(\widetilde{\mathcal{S}}_2,\widetilde{\mathcal{S}}_1)$ is a right (resp., left) inverse of $\widetilde{f} \in \widetilde{\mathcal{A}}(\widetilde{\mathcal{S}}_1,\widetilde{\mathcal{S}}_2)$.
Hence if $f \in \mathcal{A}(\mathcal{S}_1,\mathcal{S}_2)$ is invertible, then $\widetilde{f} \in \widetilde{\mathcal{A}}(\widetilde{\mathcal{S}}_1,\widetilde{\mathcal{S}}_2)$ is also invertible and $\widetilde{f}^{-1} = \widetilde{f^{-1}}$.
\end{itemize}

From now, we define topologies on the sets $\mathcal{A}(\mathcal{S}_1,\mathcal{S}_2)$ and $\mathcal{A}^c(\widetilde{\mathcal{S}}_1,\widetilde{\mathcal{S}}_2)$ which are analogy of the standard compact-open topology, by using the notion of essential subsets in Definition \ref{defn:essential_subset}:
\begin{definition}
\label{defn:modified_compact_open_topology}
First, we define the topology on the set $C(\widetilde{\mathcal{S}}_1,\widetilde{\mathcal{S}}_2)$ to be the topology generated by the family $\widetilde{\mathcal{B}}(\mathcal{S}_1,\mathcal{S}_2)$, referred to as the {\em subbase} of the topology, of all subsets of the form
\begin{equation}
O_{K,U} = \{f \in C(\widetilde{\mathcal{S}}_1,\widetilde{\mathcal{S}}_2) \mid f(K) \subset U\}
\end{equation}
such that $K$ is an essential closed (hence compact) subset of $\widetilde{\mathcal{S}}_1$ and $U$ is an essential open subset of $\widetilde{\mathcal{S}}_2$ (see Definition \ref{defn:essential_subset} for the terminology).
Namely, the open subsets of $C(\widetilde{\mathcal{S}}_1,\widetilde{\mathcal{S}}_2)$ are the arbitrary unions of finite intersections of members $O_{K,U}$ of $\widetilde{B}(\mathcal{S}_1,\mathcal{S}_2)$.
Then we define the topologies on $\mathcal{A}^c(\widetilde{\mathcal{S}}_1,\widetilde{\mathcal{S}}_2)$ and $\widetilde{\mathcal{A}}(\widetilde{\mathcal{S}}_1,\widetilde{\mathcal{S}}_2)$ to be the subspace topologies relative to $C(\widetilde{\mathcal{S}}_1,\widetilde{\mathcal{S}}_2)$, and define the topology on $\mathcal{A}(\mathcal{S}_1,\mathcal{S}_2)$ to be the topology induced from $\widetilde{\mathcal{A}}(\widetilde{\mathcal{S}}_1,\widetilde{\mathcal{S}}_2)$ via the bijection $f \mapsto \widetilde{f}$ given by Lemma \ref{lem:dynamics_virtual_states}.
\end{definition}
For simplicity, when we are focusing on the set $\mathcal{A}^c(\widetilde{\mathcal{S}}_1,\widetilde{\mathcal{S}}_2)$ rather than $C(\widetilde{\mathcal{S}}_1,\widetilde{\mathcal{S}}_2)$, we abuse the notation to write $O_{K,U}$ instead of $O_{K,U} \cap \mathcal{A}^c(\widetilde{\mathcal{S}}_1,\widetilde{\mathcal{S}}_2)$ unless some ambiguity occurs.
It is trivial from the definition that this topology coincides with the compact-open topology when both $\mathcal{S}_1$ and $\mathcal{S}_2$ are compact (i.e., $\widetilde{\mathcal{S}}_1 = \mathcal{S}_1$ and $\widetilde{\mathcal{S}}_2 = \mathcal{S}_2$), as every open or closed subset is essential in this case.
On the other hand, the definition suggests that the above topology would be in general weaker than the compact-open topology, as not every open or closed subset is essential.
Nevertheless, the following two properties still hold:
\begin{proposition}
\label{prop:dynamics_Hausdorff}
The topological space $C(\widetilde{\mathcal{S}}_1,\widetilde{\mathcal{S}}_2)$ is Hausdorff, hence so are $\mathcal{A}^c(\widetilde{\mathcal{S}}_1,\widetilde{\mathcal{S}}_2)$ and $\mathcal{A}(\mathcal{S}_1,\mathcal{S}_2)$.
\end{proposition}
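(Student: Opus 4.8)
The plan is to separate two distinct maps $f,g \in C(\widetilde{\mathcal{S}}_1,\widetilde{\mathcal{S}}_2)$ by two disjoint subbasic open sets of the form $O_{K,U}$. The difficulty, compared with the usual argument for the compact-open topology, is that we are only permitted to use \emph{essential} closed sets $K$ and \emph{essential} open sets $U$. In particular, a singleton $\{x\}$ with $x$ in the skin $\widetilde{\mathcal{S}}_1 \setminus \mathcal{S}_1$ is \emph{not} essential closed (its intersection with $\mathcal{S}_1$ is empty), so the separating point must be found inside $\mathcal{S}_1$ itself; this is the main obstacle, and resolving it is the crux of the first step.

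First I would locate a separating point lying in $\mathcal{S}_1$. Since $f \neq g$, there is $x \in \widetilde{\mathcal{S}}_1$ with $f(x) \neq g(x)$; as $\widetilde{\mathcal{S}}_2$ is Hausdorff, I choose disjoint open neighborhoods of $f(x)$ and $g(x)$ and pull them back along the continuous maps $f$ and $g$ to get an open neighborhood of $x$ on which $f$ and $g$ take values in disjoint sets. Because $\mathcal{S}_1$ is dense in $\widetilde{\mathcal{S}}_1$, this neighborhood meets $\mathcal{S}_1$, yielding a point $s \in \mathcal{S}_1$ with $f(s) \neq g(s)$. Now $\{s\}$ is closed in $\mathcal{S}_1$ (as $\mathcal{S}_1$ is Hausdorff) and $\mathrm{cl}_{\widetilde{\mathcal{S}}_1}(\{s\}) = \{s\}$ (as $\widetilde{\mathcal{S}}_1$ is compact Hausdorff), so by Lemma \ref{lem:essential_subset}(\ref{item:essential_subset_2}) the set $K = \{s\}$ is an essential closed subset of $\widetilde{\mathcal{S}}_1$.

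Next I would produce disjoint \emph{essential} open neighborhoods of $f(s)$ and $g(s)$ in $\widetilde{\mathcal{S}}_2$. Using that $\widetilde{\mathcal{S}}_2$ is Hausdorff, take disjoint open sets $W_1 \ni f(s)$ and $W_2 \ni g(s)$. Applying Lemma \ref{lem:closed_convex_neighborhood} with $\widehat{\mathcal{S}} = \widetilde{\mathcal{S}}_2$ to each of the pairs $(f(s),W_1)$ and $(g(s),W_2)$, I obtain essential open subsets $U_1,U_2$ of $\widetilde{\mathcal{S}}_2$ with $f(s) \in U_1 \subset W_1$ and $g(s) \in U_2 \subset W_2$; since $W_1 \cap W_2 = \emptyset$, these satisfy $U_1 \cap U_2 = \emptyset$.

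Finally, $O_{K,U_1}$ and $O_{K,U_2}$ are subbasic open subsets of $C(\widetilde{\mathcal{S}}_1,\widetilde{\mathcal{S}}_2)$ containing $f$ and $g$ respectively, because $f(K) = \{f(s)\} \subset U_1$ and $g(K) = \{g(s)\} \subset U_2$. They are disjoint: any $h$ belonging to both would satisfy $h(s) \in U_1 \cap U_2 = \emptyset$, a contradiction. Hence $f$ and $g$ are separated, so $C(\widetilde{\mathcal{S}}_1,\widetilde{\mathcal{S}}_2)$ is Hausdorff, and the spaces $\mathcal{A}^c(\widetilde{\mathcal{S}}_1,\widetilde{\mathcal{S}}_2)$ and $\mathcal{A}(\mathcal{S}_1,\mathcal{S}_2)$ are Hausdorff as well, being (by Definition \ref{defn:modified_compact_open_topology}) homeomorphic to subspaces of the Hausdorff space $C(\widetilde{\mathcal{S}}_1,\widetilde{\mathcal{S}}_2)$.
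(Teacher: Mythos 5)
Your proof is correct and follows essentially the same route as the paper's: find a separating point $s \in \mathcal{S}_1$ via density of $\mathcal{S}_1$ in $\widetilde{\mathcal{S}}_1$ and Hausdorffness of $\widetilde{\mathcal{S}}_2$, take $K=\{s\}$ as an essential closed set, shrink disjoint neighborhoods of $f(s)$ and $g(s)$ to essential open sets via Lemma \ref{lem:closed_convex_neighborhood}, and separate with the subbasic sets $O_{K,U_1}$, $O_{K,U_2}$. The only difference is that you spell out in more detail the density argument and the verification that $\{s\}$ is essential, which the paper leaves implicit.
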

\begin{proof}
Let $f,g \in C(\widetilde{\mathcal{S}}_1,\widetilde{\mathcal{S}}_2)$, $f \neq g$.
Then we have $f|_{\mathcal{S}_1} \neq g|_{\mathcal{S}_1}$, as $\mathcal{S}_1$ is dense in $\widetilde{\mathcal{S}}_1$ and $\widetilde{\mathcal{S}}_2$ is Hausdorff.
Choose an $s \in \mathcal{S}_1$ such that $f(s) \neq g(s)$.
As $\widetilde{\mathcal{S}}_2$ is Hausdorff, we have $U_1 \cap U_2 = \emptyset$ for some open neighborhoods $U_1,U_2$ in $\widetilde{\mathcal{S}}_2$ of $f(s),g(s)$, respectively.
By Lemma \ref{lem:closed_convex_neighborhood}, $U_1$ and $U_2$ can be chosen to be essential, while the set $K = \{s\}$ is closed and essential.
This implies that $f \in O_{K,U_1} \in \widetilde{\mathcal{B}}(\mathcal{S}_1,\mathcal{S}_2)$, $g \in O_{K,U_2} \in \widetilde{\mathcal{B}}(\mathcal{S}_1,\mathcal{S}_2)$ and $O_{K,U_1} \cap O_{K,U_2} = \emptyset$.
Hence $C(\widetilde{\mathcal{S}}_1,\widetilde{\mathcal{S}}_2)$ is Hausdorff, as desired.
\end{proof}
\begin{proposition}
\label{prop:equal_to_compact_open}
Suppose that $\mathcal{S}_1$ is compact, hence $\widetilde{\mathcal{S}}_1 = \mathcal{S}_1$.
Then the topology on $\mathcal{A}(\mathcal{S}_1,\mathcal{S}_2)$ defined above coincides with the compact-open topology.
\end{proposition}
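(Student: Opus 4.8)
The plan is to prove Proposition \ref{prop:equal_to_compact_open} by comparing the two topologies at the level of their subbases, working throughout via the standing identification $\mathcal{A}(\mathcal{S}_1,\mathcal{S}_2) = \widetilde{\mathcal{A}}(\widetilde{\mathcal{S}}_1,\widetilde{\mathcal{S}}_2)$ from Lemma \ref{lem:dynamics_virtual_states}. Since two topologies coincide as soon as every subbasic open set of each is open in the other, it suffices to match the sets $O_{K,U}$ of the modified topology with the subbasic sets of the ordinary compact-open topology on $\mathcal{A}(\mathcal{S}_1,\mathcal{S}_2) \subset C(\mathcal{S}_1,\mathcal{S}_2)$ (the inclusion being Lemma \ref{lem:affine_is_continuous}).

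First I would record the effect of the compactness hypothesis on the domain. With $\widetilde{\mathcal{S}}_1 = \mathcal{S}_1$ we have $\widetilde{\mathcal{S}}_1 \setminus \mathcal{S}_1 = \emptyset$, so by Definition \ref{defn:essential_subset} \emph{every} closed subset $K$ of $\widetilde{\mathcal{S}}_1$ is essential; moreover, in the compact Hausdorff space $\widetilde{\mathcal{S}}_1$ the closed subsets are exactly the compact ones. Hence the admissible domains $K$ appearing in the subbase of the modified topology coincide precisely with the compact sets used in the ordinary compact-open topology, and the entire potential discrepancy between the two topologies is concentrated in the choice of the target sets.

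The crucial, and essentially only substantive, observation is that every $f \in \mathcal{A}(\mathcal{S}_1,\mathcal{S}_2)$ has image inside $\mathcal{S}_2$, so that for any $K \subset \mathcal{S}_1$ and any $U \subset \widetilde{\mathcal{S}}_2$ the condition $f(K) \subset U$ is equivalent to $f(K) \subset U \cap \mathcal{S}_2$; that is, the membership test defining $O_{K,U}$ sees only $U \cap \mathcal{S}_2$. With this in hand, both inclusions become routine. For one direction, given an essential open $U \subset \widetilde{\mathcal{S}}_2$, the set $U \cap \mathcal{S}_2$ is open in $\mathcal{S}_2$ and $O_{K,U} = O_{K,\,U \cap \mathcal{S}_2}$, which is a subbasic set of the compact-open topology. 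For the converse, given $V$ open in $\mathcal{S}_2$, I would invoke Lemma \ref{lem:essential_subset}(\ref{item:essential_subset_3}) to produce the essential open set $U = \mathrm{int}_{\widetilde{\mathcal{S}}_2}(V \cup \mathcal{S}_2^c)$ satisfying $U \cap \mathcal{S}_2 = V$, whence $O_{K,V} = O_{K,U}$ is a subbasic set of the modified topology.

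Because both matchings are exact equalities of subbasic sets, no genuine obstruction arises; the only point requiring care is to confirm that comparing subbases (rather than full topologies) is legitimate, which holds since the topology generated by a subbase consists of the unions of finite intersections of subbasic sets, so that openness of each subbasic set of one topology in the other propagates to all open sets. A secondary point worth stating explicitly is that the conclusion is insensitive to whether one reads ``compact-open topology'' as living on $C(\mathcal{S}_1,\mathcal{S}_2)$ or on $C(\widetilde{\mathcal{S}}_1,\widetilde{\mathcal{S}}_2)$: in either reading the induced subbasic target sets on $\mathcal{A}(\mathcal{S}_1,\mathcal{S}_2)$ are exactly those of the form $U \cap \mathcal{S}_2$ with $U$ open in $\widetilde{\mathcal{S}}_2$, and these range precisely over the open subsets of $\mathcal{S}_2$, so the same identification of subbases applies.
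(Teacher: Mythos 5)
Your proposal is correct and follows essentially the same route as the paper's own proof: both work through the identification $\mathcal{A}(\mathcal{S}_1,\mathcal{S}_2) = \widetilde{\mathcal{A}}(\widetilde{\mathcal{S}}_1,\widetilde{\mathcal{S}}_2)$, observe that since every $f$ has image in $\mathcal{S}_2$ the condition $f(K) \subset U$ depends only on $U \cap \mathcal{S}_2$, and invoke Lemma \ref{lem:essential_subset}(\ref{item:essential_subset_3}) to see that $U \cap \mathcal{S}_2$ ranges over all open subsets of $\mathcal{S}_2$ as $U$ ranges over essential open subsets of $\widetilde{\mathcal{S}}_2$, so the two subbases coincide. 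Your explicit verification that the admissible domain sets $K$ agree (every closed subset of $\widetilde{\mathcal{S}}_1 = \mathcal{S}_1$ is essential, and closed equals compact there) is left implicit in the paper but is the same underlying fact.
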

\begin{proof}
In this situation, we have $\widetilde{f} = f$ for every $f \in \mathcal{A}(\mathcal{S}_1,\mathcal{S}_2)$, therefore $\mathcal{A}(\mathcal{S}_1,\mathcal{S}_2) = \widetilde{\mathcal{A}}(\widetilde{\mathcal{S}}_1,\widetilde{\mathcal{S}}_2)$.
Now for each $K \subset \mathcal{S}_1 = \widetilde{\mathcal{S}}_1$ and each $U \subset \widetilde{\mathcal{S}}_2$, we have
\begin{equation}
\begin{split}
O_{K,U} \cap \mathcal{A}(\mathcal{S}_1,\mathcal{S}_2)
&= \{f \in \mathcal{A}(\mathcal{S}_1,\mathcal{S}_2) \mid f(K) \subset U\} \\
&= \{f \in \mathcal{A}(\mathcal{S}_1,\mathcal{S}_2) \mid f(K) \subset U \cap \mathcal{S}_2\} \enspace.
\end{split}
\end{equation}
As the set $U \cap \mathcal{S}_2$ runs over all open subsets of $\mathcal{S}_2$ when $U$ runs over all essential open subsets of $\widetilde{\mathcal{S}}_2$ (see Lemma \ref{lem:essential_subset}(\ref{item:essential_subset_3})), the above relation implies that the subbase of the compact-open topology on $\mathcal{A}(\mathcal{S}_1,\mathcal{S}_2)$ coincides with the subbase of the topology on $\mathcal{A}(\mathcal{S}_1,\mathcal{S}_2)$ defined above.
Hence the claim holds.
\end{proof}

For the inclusion relations of the specified sets of mappings, we have the following three properties:
\begin{lemma}
\label{lem:A_is_closed}
\begin{enumerate}
\item \label{item:A_is_closed_1}
The subset $\mathcal{A}^c(\widetilde{\mathcal{S}}_1,\widetilde{\mathcal{S}}_2)$ of $C(\widetilde{\mathcal{S}}_1,\widetilde{\mathcal{S}}_2)$ is closed in $C(\widetilde{\mathcal{S}}_1,\widetilde{\mathcal{S}}_2)$.
\item \label{item:A_is_closed_2}
The subset $\mathcal{A}(\mathcal{S}_1,\mathcal{S}_2)$ of $C(\mathcal{S}_1,\mathcal{S}_2)$ is closed with respect to the compact-open topology on $C(\mathcal{S}_1,\mathcal{S}_2)$.
\end{enumerate}
\end{lemma}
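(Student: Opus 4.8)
The plan is to prove both parts by the same strategy: exhibit the set of maps that \emph{fail} to be affine as an open set, where non-affineness is detected by a single scalar \emph{affine defect} that depends continuously on finitely many evaluations of the map.

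For part (\ref{item:A_is_closed_1}) I would first reduce affineness to a scalar condition. Recall from Proposition \ref{prop:minimal_framework} that the topology of $\widetilde{V}(\mathcal{S}_2)$ is the weak topology induced by the family $\mathcal{L}$ of its continuous linear functionals; since $\widetilde{V}(\mathcal{S}_2)$ is Hausdorff, $\mathcal{L}$ separates points. Hence a continuous map $f \in C(\widetilde{\mathcal{S}}_1,\widetilde{\mathcal{S}}_2)$ is affine if and only if, for every $\ell \in \mathcal{L}$, every $x,y \in \widetilde{\mathcal{S}}_1$ and $\lambda \in [0,1]$, one has $\ell(f(\lambda x + (1-\lambda)y)) = \lambda\,\ell(f(x)) + (1-\lambda)\,\ell(f(y))$ (the forward direction is immediate, and the converse uses that $\mathcal{L}$ separates the two points $f(\lambda x+(1-\lambda)y)$ and $\lambda f(x)+(1-\lambda)f(y)$ of $\widetilde{\mathcal{S}}_2$). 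I would then take $f$ continuous but non-affine and fix a witnessing $\ell$, $x$, $y$, $\lambda$ with $\delta := \ell(f(z)) - \lambda\ell(f(x)) - (1-\lambda)\ell(f(y)) \neq 0$, where $z = \lambda x + (1-\lambda)y$; replacing $\ell$ by $-\ell$ if necessary, assume $\delta > 0$.

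The key technical point, and the step I expect to be the main obstacle, is that the subbase $\widetilde{\mathcal{B}}(\mathcal{S}_1,\mathcal{S}_2)$ admits only \emph{essential} closed evaluation sets, and a singleton $\{s\}$ is essential closed precisely when $s \in \mathcal{S}_1$ (since $\mathrm{cl}_{\widetilde{\mathcal{S}}_1}(\{s\}\cap\mathcal{S}_1)=\{s\}$ holds exactly in that case, by Definition \ref{defn:essential_subset}). To secure this I would use that $\mathcal{S}_1$ is dense in $\widetilde{\mathcal{S}}_1$ together with the continuity of $(a,b) \mapsto \ell(f(\lambda a + (1-\lambda)b)) - \lambda\ell(f(a)) - (1-\lambda)\ell(f(b))$ (continuity of the convex-combination operation, of $f$, and of $\ell$) to perturb $x,y$ into $\mathcal{S}_1$ while keeping the defect positive; convexity of $\mathcal{S}_1$ then forces $z \in \mathcal{S}_1$ as well, so $\{x\},\{y\},\{z\}$ are all essential closed. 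Next, fixing $0 < \eta < \delta/2$ and invoking Lemma \ref{lem:closed_convex_neighborhood}, I would choose essential open neighborhoods $U_x, U_y, U_z \subset \widetilde{\mathcal{S}}_2$ of $f(x), f(y), f(z)$, each contained in the $\ell$-slab of width $\eta$ about $\ell(f(x))$, $\ell(f(y))$, $\ell(f(z))$ respectively. Then $W = O_{\{x\},U_x} \cap O_{\{y\},U_y} \cap O_{\{z\},U_z}$ is an open neighborhood of $f$, and for any $g \in W$ the corresponding defect exceeds $\delta - 2\eta > 0$, so $g$ is non-affine. Thus the complement of $\mathcal{A}^c(\widetilde{\mathcal{S}}_1,\widetilde{\mathcal{S}}_2)$ is open, proving part (\ref{item:A_is_closed_1}).

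For part (\ref{item:A_is_closed_2}) I would run the identical argument directly on $\mathcal{S}_1,\mathcal{S}_2$ with the standard compact-open topology, replacing $\mathcal{L}$ by the family of affine functionals $e:\mathcal{S}_2 \to \mathbb{R}$ with bounded image. These separate points of $\mathcal{S}_2$ exactly because $\mathcal{S}_2$ is separated (Definition \ref{defn:separated}), and each such $e$ is continuous by the weak-topology description in Proposition \ref{prop:minimal_framework}, giving the same scalar characterization of affineness. Here the witnessing points $x,y,z$ already lie in $\mathcal{S}_1$, so the singletons are compact, and the slabs $\{w \in \mathcal{S}_2 : |e(w)-e(f(x))| < \eta\}$ (and likewise about $e(f(y))$, $e(f(z))$) are genuine open subsets of $\mathcal{S}_2$; no essentiality bookkeeping is needed, and the compact-open subbasic sets $O_{\{x\},U_x}, O_{\{y\},U_y}, O_{\{z\},U_z}$ directly furnish a neighborhood of $f$ consisting of non-affine maps. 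Consequently the complement of $\mathcal{A}(\mathcal{S}_1,\mathcal{S}_2)$ is open in $C(\mathcal{S}_1,\mathcal{S}_2)$, which is the assertion of part (\ref{item:A_is_closed_2}).
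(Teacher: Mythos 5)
Your proof is correct, and its skeleton matches the paper's: around each non-affine $f$ you produce a neighborhood of the form $O_{\{x\},U_x} \cap O_{\{y\},U_y} \cap O_{\{z\},U_z}$ consisting entirely of non-affine maps, after first arranging that the witnessing triple lies in $\mathcal{S}_1$ so that the singleton evaluation sets are essential (you correctly identify this as the crux, and your observation that $\{s\}$ is essential exactly when $s \in \mathcal{S}_1$ is the same one the paper uses). The two key steps are, however, implemented by genuinely different means. To relocate the witnesses into $\mathcal{S}_1$, the paper argues indirectly: if $f|_{\mathcal{S}_1}$ were affine, Lemma \ref{lem:extends_to_affine} together with density of $\mathcal{S}_1$ and Hausdorffness of $\widetilde{\mathcal{S}}_2$ would force $f$ itself to be affine; your perturbation argument (joint continuity of the scalar defect plus density of $\mathcal{S}_1 \times \mathcal{S}_1$ in $\widetilde{\mathcal{S}}_1 \times \widetilde{\mathcal{S}}_1$, with convexity of $\mathcal{S}_1$ handling $z$) reaches the same point directly and dispenses with Lemma \ref{lem:extends_to_affine} entirely. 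To certify that nearby maps remain non-affine, the paper separates $f(s_3)$ from $\lambda f(s_1) + (1-\lambda) f(s_2)$ by disjoint open sets and pulls back through the continuity of the convex-combination operation, using no functionals at all, whereas you separate by a single $\ell \in \mathcal{L}$ (resp.\ a bounded affine functional, which exists by the standing separatedness assumption and is continuous after rescaling into $[0,1]$) and run a quantitative estimate $\delta - 2\eta > 0$ in the slabs. Both routes are sound; the paper's is slightly more economical in its hypotheses (Hausdorffness suffices where you invoke separating functionals) and treats the two claims in one parallel argument, while yours makes the openness of the complement completely explicit through a single continuous defect function and is arguably more elementary in that it never appeals to the affine-extension machinery.
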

\begin{proof}
To deal with the two claims in parallel, put $(\widehat{\mathcal{S}}_1,\widehat{\mathcal{S}}_2) = (\widetilde{\mathcal{S}}_1,\widetilde{\mathcal{S}}_2)$ and $(\widehat{\mathcal{S}}_1,\widehat{\mathcal{S}}_2) = (\mathcal{S}_1,\mathcal{S}_2)$ in the case of claim \ref{item:A_is_closed_1} and claim \ref{item:A_is_closed_2}, respectively.
Let $f \in C(\widehat{\mathcal{S}}_1,\widehat{\mathcal{S}}_2)$ which is not affine.
It suffices to show that there is an open neighborhood $O$ of $f$ in $C(\widehat{\mathcal{S}}_1,\widehat{\mathcal{S}}_2)$ such that each $g \in O$ is not affine.
First, we show that $f|_{\mathcal{S}_1}$ is not affine.
Assume for contrary that $f|_{\mathcal{S}_1}$ is affine.
Then by Lemma \ref{lem:extends_to_affine}, $f|_{\mathcal{S}_1}$ extends to a continuous affine map $f':\widehat{\mathcal{S}}_1 \to \widehat{\mathcal{S}}_2$.
Now note that $f$ and $f'$ are continuous, $\mathcal{S}_1$ is dense in $\widehat{\mathcal{S}}_1$, and $\widehat{\mathcal{S}}_2$ is Hausdorff.
Then it follows that $f = f'$, a contradiction.
Hence $f|_{\mathcal{S}_1}$ is not affine, therefore there are three elements $s_1,s_2,s_3 \in \mathcal{S}_1$ and a $\lambda \in [0,1]$ such that $s_3 = \lambda s_1 + (1-\lambda) s_2$ but $f(s_3) \neq \lambda f(s_1) + (1-\lambda) f(s_2)$.

As $\widehat{\mathcal{S}}_2$ is Hausdorff, there are open neighborhoods $V_1$ of $f(s_3)$ and $V_2$ of $\lambda f(s_1) + (1-\lambda) f(s_2)$, respectively, which are disjoint.
Moreover, as the operation of taking a convex combination in $\widehat{\mathcal{S}}_2$ is continuous, there are open neighborhoods $W_1$ of $f(s_1)$ and $W_2$ of $f(s_2)$, respectively, such that $\lambda t_1 + (1-\lambda) t_2 \in V_2$ for every $t_1 \in W_1$ and $t_2 \in W_2$.
In the case of claim \ref{item:A_is_closed_1}, Lemma \ref{lem:closed_convex_neighborhood} implies that these open neighborhoods $V_1$, $W_1$ and $W_2$ can be chosen to be essential.

Now let $O$ be the intersection of the three members $O_{\{s_1\},W_1}$, $O_{\{s_2\},W_2}$ and $O_{\{s_3\},V_1}$ of the subbase of the topology on $C(\widehat{\mathcal{S}}_1,\widehat{\mathcal{S}}_2)$.
This $O$ is an open neighborhood of $f$ in $C(\widehat{\mathcal{S}}_1,\widehat{\mathcal{S}}_2)$.
If $g \in O$, then we have $g(s_1) \in W_1$, $g(s_2) \in W_2$ and $g(s_3) \in V_1$, therefore $\lambda g(s_1) + (1-\lambda) g(s_2) \in V_2$ by the choice of $W_1$ and $W_2$, and $g(s_3) \neq \lambda g(s_1) + (1-\lambda) g(s_2)$ by the choice of $V_1$ and $V_2$.
Hence each $g \in O$ is not affine, concluding the proof of Lemma \ref{lem:A_is_closed}.
\end{proof}
\begin{remark}
\label{rem:A_is_closed}
More strongly, if $\mathcal{S}_1$ is compact, then Lemma \ref{lem:A_is_closed} and Proposition \ref{prop:equal_to_compact_open} imply that $\mathcal{A}(\mathcal{S}_1,\mathcal{S}_2)$ is a closed {\em topological subspace} of $C(\mathcal{S}_1,\mathcal{S}_2)$ with respect to the compact-open topology.
\end{remark}
\begin{lemma}
\label{lem:surjective_set_closed}
\begin{enumerate}
\item \label{item:surjective_set_closed_1}
The subset of all surjective maps in $\mathcal{A}^c(\widetilde{\mathcal{S}}_1,\widetilde{\mathcal{S}}_2)$ is closed in $\mathcal{A}^c(\widetilde{\mathcal{S}}_1,\widetilde{\mathcal{S}}_2)$.
\item \label{item:surjective_set_closed_2}
If $\mathcal{S}_1$ is compact, then the subset of all surjective maps in $\mathcal{A}(\mathcal{S}_1,\mathcal{S}_2)$ is closed in $\mathcal{A}(\mathcal{S}_1,\mathcal{S}_2)$.
\end{enumerate}
\end{lemma}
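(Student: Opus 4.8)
The plan is to prove both statements by showing that the \emph{complement}, namely the set of non-surjective maps, is open. I will establish claim \ref{item:surjective_set_closed_1} first and then reduce claim \ref{item:surjective_set_closed_2} to it.

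For claim \ref{item:surjective_set_closed_1}, let $f \in \mathcal{A}^c(\widetilde{\mathcal{S}}_1,\widetilde{\mathcal{S}}_2)$ be non-surjective. Since $\widetilde{\mathcal{S}}_1$ is compact (Proposition \ref{prop:minimal_framework}) and $f$ is continuous, the image $A = f(\widetilde{\mathcal{S}}_1)$ is a compact, hence closed, \emph{proper} subset of the compact Hausdorff (therefore normal) space $\widetilde{\mathcal{S}}_2$. Choosing $y_0 \in \widetilde{\mathcal{S}}_2 \setminus A$, normality yields an open set $O_1$ with $A \subset O_1 \subset \mathrm{cl}_{\widetilde{\mathcal{S}}_2}(O_1) \subset \widetilde{\mathcal{S}}_2 \setminus \{y_0\}$. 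Applying Lemma \ref{lem:replace_with_essential} to the pair $O_1 \subset \mathrm{cl}_{\widetilde{\mathcal{S}}_2}(O_1)$ produces an essential open subset $U$ with $A \subset O_1 \subset U \subset \mathrm{cl}_{\widetilde{\mathcal{S}}_2}(O_1)$; in particular $U$ still omits $y_0$, so it is a \emph{proper} essential open subset of $\widetilde{\mathcal{S}}_2$ containing $f(\widetilde{\mathcal{S}}_1)$. Since $\mathcal{S}_1$ is dense in $\widetilde{\mathcal{S}}_1$, the whole space $\widetilde{\mathcal{S}}_1$ is an essential closed subset of itself, so $O_{\widetilde{\mathcal{S}}_1,U}$ is a genuine member of the subbase $\widetilde{\mathcal{B}}(\mathcal{S}_1,\mathcal{S}_2)$ of Definition \ref{defn:modified_compact_open_topology}. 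This $O_{\widetilde{\mathcal{S}}_1,U}$ is an open neighbourhood of $f$, and every $g \in O_{\widetilde{\mathcal{S}}_1,U}$ satisfies $g(\widetilde{\mathcal{S}}_1) \subset U \subsetneq \widetilde{\mathcal{S}}_2$ and is therefore non-surjective. Hence the non-surjective maps form an open set, and claim \ref{item:surjective_set_closed_1} follows. Note that convexity of $A$ plays no role; only compactness of the image and the separation afforded by normality are used.

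For claim \ref{item:surjective_set_closed_2}, assume $\mathcal{S}_1$ is compact, so that $\widetilde{\mathcal{S}}_1 = \mathcal{S}_1$ and $\widetilde{f} = f$ for all $f$, whence $\mathcal{A}(\mathcal{S}_1,\mathcal{S}_2) = \widetilde{\mathcal{A}}(\widetilde{\mathcal{S}}_1,\widetilde{\mathcal{S}}_2)$. If $\mathcal{S}_2$ is \emph{not} compact, then no $f \in \mathcal{A}(\mathcal{S}_1,\mathcal{S}_2)$ can be surjective, because $f(\mathcal{S}_1)$ is compact while $\mathcal{S}_2$ is not; the set of surjective maps is then empty and trivially closed. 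If $\mathcal{S}_2$ \emph{is} compact, then $\widetilde{\mathcal{S}}_2 = \mathcal{S}_2$, the identity $\mathcal{A}(\mathcal{S}_1,\mathcal{S}_2) = \mathcal{A}^c(\widetilde{\mathcal{S}}_1,\widetilde{\mathcal{S}}_2)$ holds, surjectivity onto $\mathcal{S}_2$ coincides with surjectivity onto $\widetilde{\mathcal{S}}_2$, and by Proposition \ref{prop:equal_to_compact_open} the topology is exactly the one considered in claim \ref{item:surjective_set_closed_1}. Thus claim \ref{item:surjective_set_closed_2} reduces directly to claim \ref{item:surjective_set_closed_1}.

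The main obstacle is the construction, in claim \ref{item:surjective_set_closed_1}, of a proper \emph{essential} open set $U$ separating $f(\widetilde{\mathcal{S}}_1)$ from a missed point: one cannot simply take an arbitrary open separating set, since the subbase only admits $O_{K,U}$ with $U$ essential. The combination of normality (to obtain an open set whose closure still omits $y_0$) with Lemma \ref{lem:replace_with_essential} (to replace that open set by an essential one squeezed inside its own closure) is precisely what is required, and checking that the resulting $U$ remains a proper subset is the delicate point.
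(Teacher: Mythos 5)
Your proof is correct. For claim \ref{item:surjective_set_closed_1} you follow essentially the paper's own route: note that $f(\widetilde{\mathcal{S}}_1)$ is a proper compact subset of $\widetilde{\mathcal{S}}_2$, squeeze a proper \emph{essential} open set $U \supset f(\widetilde{\mathcal{S}}_1)$ out of an open/closed sandwich via Lemma \ref{lem:replace_with_essential}, and use that $\widetilde{\mathcal{S}}_1 = \mathrm{cl}_{\widetilde{\mathcal{S}}_1}(\mathcal{S}_1)$ is essential closed so that $O_{\widetilde{\mathcal{S}}_1,U}$ is a subbasic neighbourhood of $f$ consisting of non-surjective maps; the only cosmetic difference is that you build the sandwich by normality of the compact Hausdorff space $\widetilde{\mathcal{S}}_2$ (enclosing the image, with the closure avoiding a missed point $y_0$), whereas the paper builds it by applying Lemma \ref{lem:closed_convex_neighborhood} around a missed point and taking complements --- the two manoeuvres are interchangeable. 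Where you genuinely diverge is claim \ref{item:surjective_set_closed_2}: the paper runs the two claims in parallel, repeating the whole argument inside $\mathcal{S}_2$ with $U = C^c$ and invoking Proposition \ref{prop:equal_to_compact_open} to identify the topology with the compact-open one, while you reduce claim \ref{item:surjective_set_closed_2} to claim \ref{item:surjective_set_closed_1} by the dichotomy: if $\mathcal{S}_2$ is not compact then no affine map from the compact $\mathcal{S}_1$ can be surjective (its image is compact), so the surjective set is empty and trivially closed; if $\mathcal{S}_2$ is compact then $\mathcal{A}(\mathcal{S}_1,\mathcal{S}_2)$ and $\mathcal{A}^c(\widetilde{\mathcal{S}}_1,\widetilde{\mathcal{S}}_2)$ coincide as topological spaces and the two notions of surjectivity agree. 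Your reduction is shorter and exposes the fact that the second claim is non-vacuous only when $\mathcal{S}_2$ is compact, in which case it is literally a special case of the first; the paper's parallel treatment buys a uniform argument that never needs this case split. Both are sound.
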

\begin{proof}
We prove the two claims in parallel.
Let $f \in \mathcal{A}^c(\widetilde{\mathcal{S}}_1,\widetilde{\mathcal{S}}_2)$ (resp., $f \in \mathcal{A}(\mathcal{S}_1,\mathcal{S}_2)$) which is not surjective.
Then $f(\widetilde{\mathcal{S}}_1)$ (resp., $f(\mathcal{S}_1)$) is a proper and compact subset of $\widetilde{\mathcal{S}}_2$ (resp., $\mathcal{S}_2$), therefore its complement in $\widetilde{\mathcal{S}}_2$ (resp., $\mathcal{S}_2$) is open and non-empty.
By Lemma \ref{lem:closed_convex_neighborhood}, there are a closed subset $C$ and an open subset $W$ of $\widetilde{\mathcal{S}}_2$ (resp., $\mathcal{S}_2$), respectively, such that $\emptyset \neq W \subset C$ and $C \subset f(\widetilde{\mathcal{S}}_1)^c$ (resp., $C \subset f(\mathcal{S}_1)^c$), therefore we have $f(\widetilde{\mathcal{S}}_1) \subset C^c \subset W^c \subsetneq \widetilde{\mathcal{S}}_2$ (resp., $f(\mathcal{S}_1) \subset C^c \subset W^c \subsetneq \mathcal{S}_2$).
Now we choose a subset $U$ of $W^c$ as follows: In the case of the claim \ref{item:surjective_set_closed_2}, we put $U = C^c$; while in the case of the claim \ref{item:surjective_set_closed_1}, by virtue of Lemma \ref{lem:replace_with_essential} we choose an essential open subset $U$ of $\widetilde{\mathcal{S}}_2$ such that $C^c \subset U \subset W^c$.
Then we have $f(\widetilde{\mathcal{S}}_1) \subset U \subsetneq \widetilde{\mathcal{S}}_2$ (resp., $f(\mathcal{S}_1) \subset U \subsetneq \mathcal{S}_2$), therefore the member $O_{\widetilde{\mathcal{S}}_1,U}$ (resp., $O_{\mathcal{S}_1,U}$) of the subbase of the topology on $\mathcal{A}^c(\widetilde{\mathcal{S}}_1,\widetilde{\mathcal{S}}_2)$ (resp., $\mathcal{A}(\mathcal{S}_1,\mathcal{S}_2)$) is an open neighborhood of $f$ in $\mathcal{A}^c(\widetilde{\mathcal{S}}_1,\widetilde{\mathcal{S}}_2)$ (resp., $\mathcal{A}(\mathcal{S}_1,\mathcal{S}_2)$) (this follows from Proposition \ref{prop:equal_to_compact_open} for the claim \ref{item:surjective_set_closed_2}, and from the fact that $\widetilde{\mathcal{S}}_1 = \overline{\mathcal{S}_1}$ is essential for the claim \ref{item:surjective_set_closed_1}).
Moreover, each element $g$ of $O_{\widetilde{\mathcal{S}}_1,U}$ (resp., $O_{\mathcal{S}_1,U}$) is not surjective, as we have $g(\widetilde{\mathcal{S}}_1) \subset U \subsetneq \widetilde{\mathcal{S}}_2$ (resp., $g(\mathcal{S}_1) \subset U \subsetneq \mathcal{S}_2$).
Hence the claim holds.
\end{proof}
\begin{proposition}
\label{prop:Atilde_dense_in_A}
If $\mathrm{int}_{\widetilde{V}(\mathcal{S}_2)}(\mathcal{S}_2) \neq \emptyset$, then the subset $\mathcal{A}^c(\widetilde{\mathcal{S}}_1,\mathcal{S}_2)$ of all continuous affine maps $f:\widetilde{\mathcal{S}}_1 \to \mathcal{S}_2$ is dense in $\mathcal{A}^c(\widetilde{\mathcal{S}}_1,\widetilde{\mathcal{S}}_2)$.
Hence $\widetilde{\mathcal{A}}(\widetilde{\mathcal{S}}_1,\widetilde{\mathcal{S}}_2) \supset \mathcal{A}^c(\widetilde{\mathcal{S}}_1,\mathcal{S}_2)$ is dense in $\mathcal{A}^c(\widetilde{\mathcal{S}}_1,\widetilde{\mathcal{S}}_2)$.
\end{proposition}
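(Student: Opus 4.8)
The plan is to approximate an arbitrary $f \in \mathcal{A}^c(\widetilde{\mathcal{S}}_1,\widetilde{\mathcal{S}}_2)$ by maps that are pulled slightly toward a fixed interior point of $\mathcal{S}_2$. Concretely, I would fix a point $x_0 \in \mathrm{int}_{\widetilde{V}(\mathcal{S}_2)}(\mathcal{S}_2)$, which exists by hypothesis, and for each $\lambda \in [0,1]$ define $g_\lambda \colon \widetilde{\mathcal{S}}_1 \to \widetilde{\mathcal{S}}_2$ by $g_\lambda(s) = (1-\lambda) f(s) + \lambda x_0$; that is, $g_\lambda = (1-\lambda) f + \lambda\, c_{x_0}$, where $c_{x_0}$ is the constant map with value $x_0$. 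Each $g_\lambda$ is affine and continuous (the convex-combination operation on $\widetilde{\mathcal{S}}_2$ is continuous), so $g_\lambda \in \mathcal{A}^c(\widetilde{\mathcal{S}}_1,\widetilde{\mathcal{S}}_2)$, and $g_0 = f$. I would then show that the $g_\lambda$ for small $\lambda > 0$ lie in $\mathcal{A}^c(\widetilde{\mathcal{S}}_1,\mathcal{S}_2)$ and converge to $f$ in the topology of Definition \ref{defn:modified_compact_open_topology}.

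First I would verify that $g_\lambda$ actually maps into $\mathcal{S}_2$ for every $\lambda \in (0,1]$. For any $s \in \widetilde{\mathcal{S}}_1$ we have $f(s) \in \widetilde{\mathcal{S}}_2 = \mathrm{cl}_{\widetilde{V}(\mathcal{S}_2)}(\mathcal{S}_2)$, while $x_0$ lies in the interior of the convex set $\mathcal{S}_2$; by the standard line-segment property of convex sets in a topological vector space (the half-open segment joining a closure point to an interior point lies in the interior; see e.g.\ \cite[Section II.1]{tvs}), it follows that $g_\lambda(s) = (1-\lambda) f(s) + \lambda x_0 \in \mathrm{int}_{\widetilde{V}(\mathcal{S}_2)}(\mathcal{S}_2) \subset \mathcal{S}_2$. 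Hence $g_\lambda \in \mathcal{A}^c(\widetilde{\mathcal{S}}_1,\mathcal{S}_2)$ for $\lambda \in (0,1]$. I note here that any $h \in \mathcal{A}^c(\widetilde{\mathcal{S}}_1,\mathcal{S}_2)$ satisfies $h(\mathcal{S}_1) \subset h(\widetilde{\mathcal{S}}_1) \subset \mathcal{S}_2$, so $\mathcal{A}^c(\widetilde{\mathcal{S}}_1,\mathcal{S}_2) \subset \widetilde{\mathcal{A}}(\widetilde{\mathcal{S}}_1,\widetilde{\mathcal{S}}_2)$; the second assertion of the proposition is then immediate from the first, since any set containing a dense subset is itself dense.

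It remains to show that every basic open neighborhood of $f$ meets $\{g_\lambda : 0 < \lambda \le 1\}$. Such a neighborhood has the form $\bigcap_{i=1}^{n} O_{K_i,U_i}$, where each $K_i$ is an essential closed (hence compact) subset of $\widetilde{\mathcal{S}}_1$, each $U_i$ is an essential open subset of $\widetilde{\mathcal{S}}_2$, and $f(K_i) \subset U_i$ for all $i$. For each $i$, the image $f(K_i)$ is a compact, hence closed, subset of $\widetilde{\mathcal{S}}_2$ contained in the open set $U_i$, so Lemma \ref{lem:perturb_closed_set}, applied with the closed set $f(K_i)$, the open set $U_i$, and the point $x_0$, yields an $m_i \in (0,1)$ such that $\lambda x_0 + (1-\lambda) y \in U_i$ for all $y \in f(K_i)$ and all $0 \le \lambda \le m_i$. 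Taking $y = f(s)$ for $s \in K_i$ gives $g_\lambda(K_i) \subset U_i$, i.e.\ $g_\lambda \in O_{K_i,U_i}$, whenever $0 \le \lambda \le m_i$. Setting $m = \min_{1 \le i \le n} m_i > 0$, any $\lambda \in (0,m]$ then gives $g_\lambda \in \mathcal{A}^c(\widetilde{\mathcal{S}}_1,\mathcal{S}_2) \cap \bigcap_{i=1}^{n} O_{K_i,U_i}$, which establishes density.

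The one genuinely non-formal point is the verification that $g_\lambda$ lands inside $\mathcal{S}_2$ rather than merely inside its closure $\widetilde{\mathcal{S}}_2$, which is exactly where the interior hypothesis $\mathrm{int}_{\widetilde{V}(\mathcal{S}_2)}(\mathcal{S}_2) \neq \emptyset$ is used; everything else reduces to a single application of Lemma \ref{lem:perturb_closed_set} across the finitely many subbasic constraints. I therefore expect no real obstacle beyond keeping the interior/closure bookkeeping straight and invoking the line-segment property cleanly.
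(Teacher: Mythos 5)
Your proof is correct and follows essentially the same route as the paper's: both fix an interior point of $\mathcal{S}_2$, perturb $f$ by a small convex combination toward that point, use Lemma \ref{lem:perturb_closed_set} on the compact images $f(K_i) \subset U_i$ to keep the perturbed map inside the subbasic constraints, and invoke the line-segment property (\cite[Section II.1, Theorem 1.1]{tvs}) to ensure the image lands in $\mathcal{S}_2$ itself. The only difference is cosmetic: you parametrize a family $g_\lambda$ and take $\lambda$ small, whereas the paper fixes a single $m$ and defines one map $g$.
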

\begin{proof}
Let $O_{K_i,U_i}$ ($1 \leq i \leq k$) be members of the subbase of the topology on $\mathcal{A}^c(\widetilde{\mathcal{S}}_1,\widetilde{\mathcal{S}}_2)$ such that $O = \bigcap_{i=1}^{k} O_{K_i,U_i} \neq \emptyset$.
We show that $O \cap \mathcal{A}^c(\widetilde{\mathcal{S}}_1,\mathcal{S}_2) \neq \emptyset$.
Choose $f \in O$ and $x \in \mathrm{int}_{\widetilde{V}(\mathcal{S}_2)}(\mathcal{S}_2)$.
By applying Lemma \ref{lem:perturb_closed_set} to each pair $f(K_i) \subset U_i$ (note that $f(K_i)$ is compact as well as $K_i$), there exists an $m \in (0,1)$ such that $\lambda x + (1-\lambda) y \in U_i$ for every $1 \leq i \leq k$, $y \in f(K_i)$, and $0 \leq \lambda \leq m$.
Now we define a map $g:\widetilde{\mathcal{S}}_1 \to \widetilde{\mathcal{S}}_2$ by $g(s) = m x + (1-m) f(s)$ ($s \in \widetilde{\mathcal{S}}_1$).
Then $g$ is affine and continuous as well as $f$.
For each $s \in \widetilde{\mathcal{S}}_1$, we have $f(s) \in \widetilde{\mathcal{S}}_2 = \mathrm{cl}_{\widetilde{V}(\mathcal{S}_2)}(\mathcal{S}_2)$ and $0 < m < 1$, therefore it follows from \cite[Section II.1, Theorem 1.1]{tvs} that $m x + (1-m) f(s) \in \mathcal{S}_2$.
Hence $g \in \mathcal{A}^c(\widetilde{\mathcal{S}}_1,\mathcal{S})$.
Moreover, for each $1 \leq i \leq k$ and $s \in K_i$, we have $f(s) \in f(K_i)$ and $g(s) = m x + (1-m) f(s) \in U_i$ by the choice of $m$.
This implies that $g \in O$, therefore $g \in O \cap \mathcal{A}^c(\widetilde{\mathcal{S}}_1,\mathcal{S}_2)$ as desired.
Hence the claim holds.
\end{proof}

The next lemma says that any convex set $\mathcal{S}$ and its closure $\widetilde{\mathcal{S}}$ can be identified with the sets $\mathcal{A}(\ast,\mathcal{S})$ and $\mathcal{A}^c(\ast,\widetilde{\mathcal{S}}) = \mathcal{A}(\ast,\widetilde{\mathcal{S}})$, respectively, where $\ast$ denotes the convex set with just one element, hence several properties of convex sets can be immediately derived from those of the sets $\mathcal{A}(\mathcal{S}_1,\mathcal{S}_2)$:
\begin{lemma}
\label{lem:state_identified_with_dynamics}
For each $s \in \mathcal{S}$ (resp., $s \in \widetilde{\mathcal{S}}$), let $\iota_s$ denote the map $\ast \to \mathcal{S}$ (resp., $\ast \to \widetilde{\mathcal{S}}$) given by $\iota_s(\ast) = s$.
Then the map $\varphi:\mathcal{S} \to \mathcal{A}(\ast,\mathcal{S})$ (resp., $\varphi:\widetilde{\mathcal{S}} \to \mathcal{A}^c(\ast,\widetilde{\mathcal{S}})$), $\varphi(s) = \iota_s$, is an affine homeomorphism.
\end{lemma}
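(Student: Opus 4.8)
The statement comprises two parallel claims, the \emph{untilded} one $\varphi:\mathcal{S} \to \mathcal{A}(\ast,\mathcal{S})$ and the \emph{tilded} one $\varphi:\widetilde{\mathcal{S}} \to \mathcal{A}^c(\ast,\widetilde{\mathcal{S}})$. My plan is to verify the three requirements of an affine homeomorphism—bijectivity, affineness, and bicontinuity—separately, with essentially all the content concentrated in the last.

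First I would dispose of bijectivity and the affine structure, which are formal. Since $\ast$ is a one-point convex set, every map $\ast \to \mathcal{S}$ (resp. $\ast \to \widetilde{\mathcal{S}}$) is automatically affine (the only convex combination available is $\ast = \lambda\ast + (1-\lambda)\ast$) and, the domain carrying the trivial topology, automatically continuous; hence $f \mapsto f(\ast)$ is a two-sided inverse of $\varphi$, so $\varphi$ is a bijection. Affineness follows by unwinding the pointwise convex structure on $\mathcal{A}(\ast,\mathcal{S})$: both $\varphi(\lambda s_1 + (1-\lambda)s_2)$ and $\lambda\varphi(s_1) + (1-\lambda)\varphi(s_2)$ are the constant map with value $\lambda s_1 + (1-\lambda)s_2$.

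The substantive point is bicontinuity, which I would first reduce to a statement about a topology on $\widetilde{\mathcal{S}}$ alone. Since $\ast$ is compact we have $\widetilde{\ast} = \ast$ and $\mathcal{A}^c(\ast,\widetilde{\mathcal{S}}) = \mathcal{A}(\ast,\widetilde{\mathcal{S}}) = C(\ast,\widetilde{\mathcal{S}})$, and the correspondence $f \mapsto f(\ast)$ identifies $C(\ast,\widetilde{\mathcal{S}})$ with $\widetilde{\mathcal{S}}$ as a set while, via Lemma \ref{lem:dynamics_virtual_states}, transporting $\varphi$ to the identity of the underlying set. Under this identification I would compute the subbase of Definition \ref{defn:modified_compact_open_topology}: the only subsets of $\widetilde{\ast} = \ast$ are $\emptyset$ and $\ast$ (both essential closed, as the ``skin'' $\widetilde{\ast}\setminus\ast$ is empty), the set $O_{\emptyset,U}$ is the whole space, and $O_{\ast,U}$ corresponds precisely to the essential open subset $U$ of $\widetilde{\mathcal{S}}$. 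Thus the transported topology on $\widetilde{\mathcal{S}}$ is exactly the topology $\tau_e$ generated by the essential open subsets; in the tilded case this is the topology on the target, and in the untilded case the topology on $\mathcal{A}(\ast,\mathcal{S})$ becomes the subspace topology induced by $\tau_e$ on $\mathcal{S} \subset \widetilde{\mathcal{S}}$.

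It then remains to prove that $\tau_e$ coincides with the original topology of $\widetilde{\mathcal{S}}$, and this is the crux. One inclusion is immediate, since an essential open set is in particular open, so $\tau_e$ is contained in the original topology. For the reverse inclusion I would invoke Lemma \ref{lem:closed_convex_neighborhood} in the case $\widehat{\mathcal{S}} = \widetilde{\mathcal{S}}$: given any original-open $U$ and any $s \in U$, that lemma supplies an essential open set $O'$ with $s \in O' \subset U$, whence $U = \bigcup_{s \in U} O'$ lies in $\tau_e$. This settles the tilded case directly, and for the untilded case I would then note that the subspace topology that $\tau_e = \tau_{\mathrm{orig}}$ induces on $\mathcal{S}$ is the original topology of $\mathcal{S}$ (equivalently, one argues directly that $\{O \cap \mathcal{S} : O\ \text{essential open}\}$ is exactly the collection of open subsets of $\mathcal{S}$ by Lemma \ref{lem:essential_subset}(\ref{item:essential_subset_3})). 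The main obstacle is precisely this identification of $\tau_e$ with the ambient topology; everything hinges on the fact—furnished by Lemma \ref{lem:closed_convex_neighborhood}—that the essential open sets still form a neighborhood base at every point of $\widetilde{\mathcal{S}}$, so that restricting the subbase to essential sets does not actually coarsen the topology.
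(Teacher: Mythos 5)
Your proposal is correct, but it settles the crucial step by a different mechanism than the paper. The formal parts agree: the paper likewise disposes of well-definedness, affineness and bijectivity in a line, and its continuity argument is exactly your easy inclusion, namely that $\varphi^{-1}(O_{\ast,U}) = U$ is open because essential open subsets are in particular open. The divergence is in the inverse direction. The paper never verifies directly that essential open sets form a neighborhood base; instead it invokes the standard fact that a continuous bijection from a compact space to a Hausdorff space is a homeomorphism, using compactness of $\widetilde{\mathcal{S}}$ together with the Hausdorff property of $\mathcal{A}^c(\ast,\widetilde{\mathcal{S}})$ supplied by Proposition \ref{prop:dynamics_Hausdorff}. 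You instead prove openness of $\varphi$ directly, identifying the transported topology with the topology $\tau_e$ generated by essential open sets and showing $\tau_e$ equals the original topology via Lemma \ref{lem:closed_convex_neighborhood}; this is precisely the observation the paper itself deploys later when proving Lemma \ref{lem:iota_B_continuous}, where each open subset of $\widetilde{\mathcal{S}}_2$ is written as a union of essential open subsets. The two routes are of comparable length: the paper's is slightly shorter given the machinery already in place, while yours is more self-contained (no appeal to compactness of $\widetilde{\mathcal{S}}$ or to the Hausdorff property of the mapping space) and yields the sharper conclusion that the modified compact-open topology on $C(\ast,\widetilde{\mathcal{S}})$ is exactly the original topology of $\widetilde{\mathcal{S}}$ transported along $f \mapsto f(\ast)$, i.e., restricting the subbase to essential sets coarsens nothing when the domain is a point. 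Your handling of the untilded case (restriction of the tilded homeomorphism to $\mathcal{S}$, or the direct identification of traces of essential opens with opens of $\mathcal{S}$ via Lemma \ref{lem:essential_subset}(\ref{item:essential_subset_3})) matches the paper's second paragraph.
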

\begin{proof}
First we consider the case of $\widetilde{\mathcal{S}}$.
Note that $\iota_s \in \mathcal{A}^c(\ast,\widetilde{\mathcal{S}})$ for each $s \in \widetilde{\mathcal{S}}$, therefore $\varphi$ is well-defined.
It is obvious that $\varphi$ is affine and bijective.
Moreover, for each member $O_{\ast,U}$ of $\widetilde{\mathcal{B}}(\ast,\mathcal{S})$, we have $\varphi^{-1}(O_{\ast,U}) = U$ which is open in $\widetilde{\mathcal{S}}$, therefore $\varphi$ is continuous.
As $\widetilde{\mathcal{S}}$ is compact and $\mathcal{A}^c(\ast,\widetilde{\mathcal{S}})$ is Hausdorff (by Proposition \ref{prop:dynamics_Hausdorff}), a famous theorem in general topology implies that $\varphi$ is a homeomorphism, as desired.

Secondly, we consider the case of $\mathcal{S}$.
We have $\widetilde{\iota_s} = \iota_s$ for every $s \in \mathcal{S}$, therefore $\mathcal{A}(\ast,\mathcal{S})$ is naturally regarded as a subset of $\mathcal{A}^c(\ast,\widetilde{\mathcal{S}})$.
Now the map $\mathcal{S} \to \mathcal{A}(\ast,\mathcal{S})$ under consideration is bijective and it is the restriction to $\mathcal{S}$ of the map $\widetilde{\mathcal{S}} \to \mathcal{A}^c(\ast,\widetilde{\mathcal{S}})$ specified in the statement.
Hence the former map is also affine and homeomorphic, concluding the proof of Lemma \ref{lem:state_identified_with_dynamics}.
\end{proof}

Let $\mathcal{A}^*(\mathcal{S}_1,\mathcal{S}_2)$ (resp., $\mathcal{A}^{c*}(\widetilde{\mathcal{S}}_1,\widetilde{\mathcal{S}}_2)$) denote the set of all $f \in \mathcal{A}(\mathcal{S}_1,\mathcal{S}_2)$ (resp., $f \in \mathcal{A}^c(\widetilde{\mathcal{S}}_1,\widetilde{\mathcal{S}}_2)$) which are bijective.
From now, we show that the topology defined above is compatible with several operations and objects relevant to the sets of affine maps:
\begin{lemma}
\label{lem:convex_structure_continuous}
The map $\varphi:[0,1] \times \mathcal{A}^c(\widetilde{\mathcal{S}}_1,\widetilde{\mathcal{S}}_2) \times \mathcal{A}^c(\widetilde{\mathcal{S}}_1,\widetilde{\mathcal{S}}_2) \to \mathcal{A}^c(\widetilde{\mathcal{S}}_1,\widetilde{\mathcal{S}}_2)$ defined by $\varphi(\lambda,f,g) = \lambda f + (1-\lambda) g$ is continuous.
Namely, the operation of taking convex combination of two maps is continuous with respect to the above topology.
\end{lemma}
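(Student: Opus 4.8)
The plan is to prove continuity by testing against the subbase of the topology on $\mathcal{A}^c(\widetilde{\mathcal{S}}_1,\widetilde{\mathcal{S}}_2)$, i.e., it suffices to show that $\varphi^{-1}(O_{K,U})$ is open for every essential closed subset $K$ of $\widetilde{\mathcal{S}}_1$ and every essential open subset $U$ of $\widetilde{\mathcal{S}}_2$. So I fix such $K$, $U$, and a triple $(\lambda_0,f_0,g_0) \in \varphi^{-1}(O_{K,U})$, which means that $\lambda_0 f_0(s) + (1-\lambda_0) g_0(s) \in U$ for every $s \in K$. Writing $c:[0,1]\times\widetilde{\mathcal{S}}_2\times\widetilde{\mathcal{S}}_2\to\widetilde{\mathcal{S}}_2$, $c(\mu,a,b)=\mu a+(1-\mu)b$, for the convex-combination map, which is continuous because $\widetilde{\mathcal{S}}_2$ sits inside the topological vector space $\widetilde{V}(\mathcal{S}_2)$ (this continuity is already invoked earlier in the excerpt), the goal is to construct a product neighborhood of $(\lambda_0,f_0,g_0)$ that $\varphi$ carries into $O_{K,U}$.

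First I would localize. For each $s \in K$ we have $c(\lambda_0,f_0(s),g_0(s)) \in U$, so by continuity of $c$ there are an open interval $I_s \ni \lambda_0$ in $[0,1]$ and open neighborhoods of $f_0(s)$ and $g_0(s)$ whose $c$-image lies in $U$. Shrinking the latter two by Lemma \ref{lem:closed_convex_neighborhood}, I may take them to be \emph{essential} open sets $A_s$ and $B_s$, still with $c(I_s \times A_s \times B_s) \subset U$. Since $f_0,g_0$ are continuous, $N_s := f_0^{-1}(A_s) \cap g_0^{-1}(B_s)$ is an open neighborhood of $s$ in $\widetilde{\mathcal{S}}_1$, and a further application of Lemma \ref{lem:closed_convex_neighborhood} yields an \emph{essential} closed set $C_s$ together with an open set $P_s$ satisfying $s \in P_s \subset C_s \subset N_s$; note that $f_0(C_s) \subset A_s$ and $g_0(C_s) \subset B_s$ by construction.

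Next I would globalize using compactness: $K$ is closed in the compact space $\widetilde{\mathcal{S}}_1$, hence compact, so finitely many $P_{s_1},\dots,P_{s_n}$ cover $K$. Put $K_j := C_{s_j}$ (essential closed), $I := \bigcap_{j} I_{s_j}$ (an open interval around $\lambda_0$), and form $W := I \times \bigl(\bigcap_j O_{K_j,A_{s_j}}\bigr) \times \bigl(\bigcap_j O_{K_j,B_{s_j}}\bigr)$, which is a basic open set in $[0,1]\times\mathcal{A}^c(\widetilde{\mathcal{S}}_1,\widetilde{\mathcal{S}}_2)\times\mathcal{A}^c(\widetilde{\mathcal{S}}_1,\widetilde{\mathcal{S}}_2)$ precisely because each $K_j$ is essential closed and each $A_{s_j},B_{s_j}$ is essential open. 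It contains $(\lambda_0,f_0,g_0)$ since $f_0(K_j)=f_0(C_{s_j})\subset A_{s_j}$ and likewise for $g_0$. For the image, given $(\mu,f,g)\in W$ and $s\in K$, I pick $j$ with $s\in P_{s_j}\subset C_{s_j}=K_j$; then $\mu\in I_{s_j}$, $f(s)\in A_{s_j}$, $g(s)\in B_{s_j}$, whence $(\mu f+(1-\mu)g)(s)=c(\mu,f(s),g(s))\in U$. As $s\in K$ was arbitrary this gives $\varphi(\mu,f,g)\in O_{K,U}$, so $W\subset\varphi^{-1}(O_{K,U})$ and the preimage is open.

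The main obstacle, and the reason this is not simply the textbook argument for the compact-open topology, is the essential-subset bookkeeping: every set used to assemble $W$ must be genuinely essential, or $W$ need not be open in this weaker topology. The role of Lemma \ref{lem:closed_convex_neighborhood} is exactly to let me shrink each neighborhood to an essential one while preserving the inclusions $c(I_s\times A_s\times B_s)\subset U$ and $f_0(C_s)\subset A_s$, and in particular to produce essential closed sets $C_s$ whose interiors still cover $K$, which neatly bypasses the usual closed-shrinking step. A minor point to keep in mind throughout is that all maps range over $\mathcal{A}^c(\widetilde{\mathcal{S}}_1,\widetilde{\mathcal{S}}_2)$, so that each $\mu f+(1-\mu)g$ is again a continuous affine map and each $O_{K_j,A_{s_j}}$ is tacitly intersected with $\mathcal{A}^c(\widetilde{\mathcal{S}}_1,\widetilde{\mathcal{S}}_2)$.
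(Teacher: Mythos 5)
Your proof is correct, and it follows the same overall strategy as the paper (reduce to subbase elements $O_{K,U}$, localize using continuity of the convex-combination map, make the relevant sets essential, then use compactness to assemble a finite intersection of subbase elements), but the implementation of the middle steps is genuinely different. The paper keeps its shrinking in the codomain: for each $s \in K$ it nests $f(s) \in W^f_s \subset N^f_s \subset V^f_s$ (open, closed, essential open) in $\widetilde{\mathcal{S}}_2$, covers the compact image $(f \times g)(K)$ in $\widetilde{\mathcal{S}}_2 \times \widetilde{\mathcal{S}}_2$ by the products $W^f_s \times W^g_s$, and then invokes Lemma \ref{lem:replace_with_essential} to interpolate essential closed sets $M^f_i, M^g_i$ between $f^{-1}(W^f_i)$ and $f^{-1}(N^f_i)$, producing two essential closed sets per index. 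You instead pull back first, forming $N_s = f_0^{-1}(A_s) \cap g_0^{-1}(B_s)$ in the domain, and apply Lemma \ref{lem:closed_convex_neighborhood} pointwise in $\widetilde{\mathcal{S}}_1$ to get a single essential closed set $C_s$ with open core $P_s$, covering $K$ itself rather than its image. This buys you a leaner argument: no intermediate closed sets $N^f_s, N^g_s$ in $\widetilde{\mathcal{S}}_2$, no appeal to Lemma \ref{lem:replace_with_essential} at all, and one family of essential closed sets $K_j$ serving both coordinates instead of two. The paper's version, in exchange, is the more literal adaptation of the textbook compact-open-topology argument (cover the image under $f \times g$), which makes its kinship with the classical proof more visible. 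Both are complete; yours is marginally more economical in the essential-subset bookkeeping, which you correctly identify as the only point where this lemma differs from the classical statement.
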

\begin{proof}
It suffices to show that $\varphi^{-1}(O_{K,U})$ is open for every $O_{K,U} \in \widetilde{\mathcal{B}}(\mathcal{S}_1,\mathcal{S}_2)$.
Let $\lambda \in [0,1]$ and $f,g \in \mathcal{A}^c(\widetilde{\mathcal{S}}_1,\widetilde{\mathcal{S}}_2)$ such that $\varphi(\lambda,f,g) \in O_{K,U}$.
Then we construct an open neighborhood $O$ of $(\lambda,f,g)$ such that $O \subset \varphi^{-1}(O_{K,U})$, in the following manner.

For each $s \in K$, we have $\lambda f(s) + (1-\lambda) g(s) = \varphi(\lambda,f,g)(s) \in U$.
As the operation of taking a convex combination of two elements in $\widetilde{\mathcal{S}}_2$ is continuous, there are an open neighborhood $I_s \subset [0,1]$ of $\lambda$ and open neighborhoods $V^f_s,V^g_s \subset \widetilde{\mathcal{S}}_2$ of $f(s),g(s)$, respectively, such that $\mu t^f + (1-\mu) t^g \in U$ for every $\mu \in I_s$, $t^f \in V^f_s$ and $t^g \in V^g_s$.
By Lemma \ref{lem:closed_convex_neighborhood}, these $V^f_s$ and $V^g_s$ can be chosen to be essential.
By Lemma \ref{lem:closed_convex_neighborhood} again, there are closed subsets $N^f_s,N^g_s$ of $\widetilde{\mathcal{S}}_2$ and open subsets $W^f_s,W^g_s$ of $\widetilde{\mathcal{S}}_2$ such that $f(s) \in W^f_s \subset N^f_s \subset V^f_s$ and $g(s) \in W^g_s \subset N^g_s \subset V^g_s$.

Now the map $f \times g:\widetilde{\mathcal{S}}_1 \to \widetilde{\mathcal{S}}_2 \times \widetilde{\mathcal{S}}_2$ defined by $(f \times g)(x) = (f(x),g(x))$ is continuous as both $f$ and $g$ are continuous, therefore $(f \times g)(K)$ is compact in $\widetilde{\mathcal{S}}_2 \times \widetilde{\mathcal{S}}_2$ as $K$ is compact.
On the other hand, $\{W^f_s \times W^g_s\}_{s \in K}$ is an open covering of $(f \times g)(K)$ by the choice of $W^f_s$ and $W^g_s$.
This implies that there are finitely many elements $s_1,\dots,s_k \in K$ such that $(f \times g)(K) \subset \bigcup_{i=1}^{k} (W^f_{s_i} \times W^g_{s_i})$.
We write $N^f_i = N^f_{s_i}$, $N^g_i = N^g_{s_i}$, $V^f_i = V^f_{s_i}$, and $V^g_i = V^g_{s_i}$ for simplicity.
Now we have $s_i \in f^{-1}(W^f_i) \subset f^{-1}(N^f_i)$ and $s_i \in g^{-1}(W^g_i) \subset g^{-1}(N^g_i)$, therefore Lemma \ref{lem:replace_with_essential} implies that we have $f^{-1}(W^f_i) \subset M^f_i \subset f^{-1}(N^f_i)$ and $g^{-1}(W^g_i) \subset M^g_i \subset g^{-1}(N^g_i)$ for some essential closed subsets $M^f_i$ and $M^g_i$.
We define
\begin{equation}
O = \bigcap_{i = 1}^{k} \left( I_{s_i} \times O_{M^f_i,V^f_i} \times O_{M^g_i,V^g_i} \right) \enspace.
\end{equation}
Note that $O_{M^f_i,V^f_i}$ and $O_{M^g_i,V^g_i}$ are members of $\widetilde{\mathcal{B}}(\mathcal{S}_1,\mathcal{S}_2)$, therefore $O$ is an open neighborhood of $(\lambda,f,g)$ in $[0,1] \times \mathcal{A}^c(\widetilde{\mathcal{S}}_1,\widetilde{\mathcal{S}}_2) \times \mathcal{A}^c(\widetilde{\mathcal{S}}_1,\widetilde{\mathcal{S}}_2)$.

We show that $\varphi(O) \subset O_{K,U}$.
Let $(\mu,f',g') \in O$.
Then for each $s \in K$, there is an index $i$ such that $(f(s),g(s)) \in W^f_i \times W^g_i$, therefore $s \in f^{-1}(W^f_i) \subset M^f_i$ and $s \in g^{-1}(W^g_i) \subset M^g_i$.
Now we have $\mu \in I_{s_i}$, $f'(s) \in V^f_i$ and $g'(s) \in V^g_i$, therefore $\mu f'(s) + (1-\mu) g'(s) \in U$ by the property mentioned in the second last paragraph.
This implies that $\varphi(\mu,f',g')(K) \subset U$, therefore $\varphi(\mu,f',g') \in O_{K,U}$, as desired.
Hence the proof of Lemma \ref{lem:convex_structure_continuous} is concluded.
\end{proof}
\begin{lemma}
\label{lem:composition_continuous}
The map $\varphi:\mathcal{A}^c(\widetilde{\mathcal{S}}_1,\widetilde{\mathcal{S}}_2) \times \mathcal{A}^c(\widetilde{\mathcal{S}}_2,\widetilde{\mathcal{S}}_3) \to \mathcal{A}^c(\widetilde{\mathcal{S}}_1,\widetilde{\mathcal{S}}_3)$ defined by $\varphi(f,g) = g \circ f$ is continuous.
Namely, the operation of composing two maps is continuous.
\end{lemma}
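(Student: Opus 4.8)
The plan is to follow the classical argument proving continuity of composition for the compact-open topology, while taking care that all interpolating sets entering the relevant subbasic neighborhoods can be chosen to be essential, which is exactly what Lemma \ref{lem:replace_with_essential} guarantees. Since continuity of $\varphi$ is equivalent to the openness of $\varphi^{-1}(O_{K,U})$ for every member $O_{K,U}$ of the subbase of the topology on $\mathcal{A}^c(\widetilde{\mathcal{S}}_1,\widetilde{\mathcal{S}}_3)$, where $K$ is an essential closed subset of $\widetilde{\mathcal{S}}_1$ and $U$ is an essential open subset of $\widetilde{\mathcal{S}}_3$, I would fix a pair $(f,g)$ with $\varphi(f,g) = g \circ f \in O_{K,U}$, that is, $g(f(K)) \subset U$, and construct an open neighborhood of $(f,g)$ in the product that is mapped into $O_{K,U}$.

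First I would observe that $f(K)$ is a compact (hence closed) subset of $\widetilde{\mathcal{S}}_2$, as $K$ is compact and $f$ is continuous, and that it is contained in the open set $g^{-1}(U)$. Since $\widetilde{\mathcal{S}}_2$ is compact and Hausdorff by Proposition \ref{prop:minimal_framework}, hence normal, the closed set $f(K)$ and the open set $g^{-1}(U)$ can be separated, so that there is an open subset $W_0$ of $\widetilde{\mathcal{S}}_2$ with $f(K) \subset W_0 \subset \mathrm{cl}_{\widetilde{\mathcal{S}}_2}(W_0) \subset g^{-1}(U)$. Applying Lemma \ref{lem:replace_with_essential} to the pair $W_0 \subset \mathrm{cl}_{\widetilde{\mathcal{S}}_2}(W_0)$ then produces an essential open subset $W$ and an essential closed subset $L$ of $\widetilde{\mathcal{S}}_2$ with $f(K) \subset W_0 \subset W \subset L \subset \mathrm{cl}_{\widetilde{\mathcal{S}}_2}(W_0) \subset g^{-1}(U)$; in particular $f(K) \subset W$ and $g(L) \subset U$.

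With these essential sets in hand, I would set $O = O_{K,W} \times O_{L,U}$, which is an open neighborhood of $(f,g)$ in the product, because $K$ and $L$ are essential closed, $W$ and $U$ are essential open (so that $O_{K,W} \in \widetilde{\mathcal{B}}(\mathcal{S}_1,\mathcal{S}_2)$ and $O_{L,U} \in \widetilde{\mathcal{B}}(\mathcal{S}_2,\mathcal{S}_3)$), while $f(K) \subset W$ gives $f \in O_{K,W}$ and $g(L) \subset U$ gives $g \in O_{L,U}$. For any $(f',g') \in O$ I would note that $f'(K) \subset W \subset L$, whence $g'(f'(K)) \subset g'(L) \subset U$ since $g' \in O_{L,U}$; thus $\varphi(f',g') = g' \circ f' \in O_{K,U}$, proving $O \subset \varphi^{-1}(O_{K,U})$ and hence the openness of $\varphi^{-1}(O_{K,U})$, as required.

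The only real difference from the standard compact-open setting is that the intermediate sets $W$ and $L$ used to split the neighborhood must be essential, so that $O_{K,W}$ and $O_{L,U}$ genuinely belong to the respective subbases; this is precisely what the interposition in Lemma \ref{lem:replace_with_essential} resolves, and I expect it to be the main (indeed the only) nontrivial point. Beyond it, normality of the compact Hausdorff space $\widetilde{\mathcal{S}}_2$ supplies the preliminary separation, and the remaining set-containment bookkeeping is routine.
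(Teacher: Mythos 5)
Your proof is correct, but it takes a genuinely different and in fact simpler route than the paper's. The paper proves this lemma with a pointwise covering argument: for each $s \in K$ it applies Lemma \ref{lem:closed_convex_neighborhood} twice at the point $f(s)$ to build a chain $f(s) \in W_s \subset N_s \subset V_s \subset M_s \subset g^{-1}(U)$ with $V_s$ essential, extracts a finite subcover of the compact set $f(K)$ by the $W_{s_i}$, and then invokes Lemma \ref{lem:replace_with_essential} repeatedly to produce essential closed sets $N'_i$ (with $f^{-1}(W_i) \subset N'_i \subset f^{-1}(N_i)$) and $M'$ (with $\bigcup_i V_i \subset M' \subset \bigcup_i M_i$), ending with the neighborhood $\bigl(\bigcap_{i} O_{N'_i,V_i}\bigr) \times O_{M',U}$, a finite intersection of subbasic sets in the first factor. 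You instead exploit normality of the compact Hausdorff space $\widetilde{\mathcal{S}}_2$ to interpose a single open set $W_0$ with $f(K) \subset W_0 \subset \mathrm{cl}_{\widetilde{\mathcal{S}}_2}(W_0) \subset g^{-1}(U)$, and then apply Lemma \ref{lem:replace_with_essential} just once to make both interpolating sets essential, so that a product $O_{K,W} \times O_{L,U}$ of two single subbasic sets already works. Your globalization via normality replaces the paper's localization-plus-compactness step and shortens the bookkeeping considerably; the paper's heavier scheme is the one genuinely needed in the neighboring Lemma \ref{lem:convex_structure_continuous} (where two maps are evaluated at the same point, so a common refinement of covers is unavoidable), and the authors appear to have simply reused that template here. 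Both arguments hinge on the same essential-set upgrade supplied by Lemma \ref{lem:replace_with_essential}, which you correctly identified as the only point where this topology differs from the classical compact-open setting.
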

\begin{proof}
It suffices to show that $\varphi^{-1}(O_{K,U})$ is open for every $O_{K,U} \in \widetilde{\mathcal{B}}(\mathcal{S}_1,\mathcal{S}_2)$.
Let $f \in \mathcal{A}^c(\widetilde{\mathcal{S}}_1,\widetilde{\mathcal{S}}_2)$ and $g \in \mathcal{A}^c(\widetilde{\mathcal{S}}_2,\widetilde{\mathcal{S}}_3)$ such that $g \circ f \in O_{K,U}$.
Then we have $f(K) \subset g^{-1}(U) \subset \widetilde{\mathcal{S}}_2$, while $f(K)$ is compact (as $K$ is compact) and $g^{-1}(U)$ is open.
Now for each $s \in K$, Lemma \ref{lem:closed_convex_neighborhood} (applied twice) implies that there are open subsets $V_s,W_s$ of $\widetilde{\mathcal{S}}_2$ and closed subsets $M_s,N_s$ of $\widetilde{\mathcal{S}}_2$ such that $V_s$ is essential and $f(s) \in W_s \subset N_s \subset V_s \subset M_s \subset g^{-1}(U)$.
As $f(K)$ is compact, there are finitely many elements $s_1,\dots,s_k \in K$ such that $f(K) \subset \bigcup_{i = 1}^{k} W_{s_i}$.
Write $W_i = W_{s_i}$, $N_i = N_{s_i}$, $V_i = V_{s_i}$, and $M_i = M_{s_i}$.
Now we have $s_i \in f^{-1}(W_i) \subset f^{-1}(N_i)$, therefore Lemma \ref{lem:replace_with_essential} implies that there is an essential closed subset $N'_i$ such that $f^{-1}(W_i) \subset N'_i \subset f^{-1}(N_i)$.
Moreover, we have $\bigcup_{i=1}^{k} V_i \subset \bigcup_{i=1}^{k} M_i$, therefore Lemma \ref{lem:replace_with_essential} implies that there is an essential closed subset $M'$ such that $\bigcup_{i=1}^{k} V_i \subset M' \subset \bigcup_{i=1}^{k} M_i$.
Now put $O_f = \bigcap_{i=1}^{k} O_{N'_i,V_i}$ and $O_g = O_{M',U}$.
Note that $O_f$ and $O_g$ are open neighborhoods of $f$ and $g$, respectively.
We show that $\varphi(O_f \times O_g) \subset O_{K,U}$.
Let $f' \in O_f$ and $g' \in O_g$.
Then for each $s \in K$, there is an index $i$ such that $s \in f^{-1}(W_i)$.
Now we have $s \in N'_i$, therefore $f'(s) \in V_i \subset M'$ and $g'(f'(s)) \in U$.
This implies that $g'(f'(K)) \subset U$, therefore we have $\varphi(f',g') = g' \circ f' \in O_{K,U}$, as desired.
Hence the claim holds.
\end{proof}
\begin{corollary}
\label{cor:evaluation_map_continuous}
The evaluation map $\mathcal{A}^c(\widetilde{\mathcal{S}}_1,\widetilde{\mathcal{S}}_2) \times \widetilde{\mathcal{S}}_1 \to \widetilde{\mathcal{S}}_2$, $(f,s) \mapsto f(s)$, is continuous.
\end{corollary}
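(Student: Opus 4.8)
The plan is to realize the evaluation map as a composite of maps already known to be continuous, exploiting the identification of points of a convex set with affine maps from the one-point convex set $\ast$ provided by Lemma \ref{lem:state_identified_with_dynamics}. Concretely, for $s \in \widetilde{\mathcal{S}}_1$ write $\iota_s \in \mathcal{A}^c(\ast,\widetilde{\mathcal{S}}_1)$ for the map $\iota_s(\ast) = s$; then for any $f \in \mathcal{A}^c(\widetilde{\mathcal{S}}_1,\widetilde{\mathcal{S}}_2)$ one checks immediately that $f \circ \iota_s = \iota_{f(s)}$, since both sides send $\ast$ to $f(s)$. Thus the value $f(s)$ is recovered from the composite $f \circ \iota_s$ by undoing the identification of Lemma \ref{lem:state_identified_with_dynamics} on the target side.

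Accordingly, I would factor the evaluation map into three stages. First apply the affine homeomorphism $\varphi_1 : \widetilde{\mathcal{S}}_1 \to \mathcal{A}^c(\ast,\widetilde{\mathcal{S}}_1)$, $s \mapsto \iota_s$, of Lemma \ref{lem:state_identified_with_dynamics} to the second coordinate, which, together with a swap of the two factors, yields a homeomorphism $\mathcal{A}^c(\widetilde{\mathcal{S}}_1,\widetilde{\mathcal{S}}_2) \times \widetilde{\mathcal{S}}_1 \to \mathcal{A}^c(\ast,\widetilde{\mathcal{S}}_1) \times \mathcal{A}^c(\widetilde{\mathcal{S}}_1,\widetilde{\mathcal{S}}_2)$, $(f,s) \mapsto (\iota_s, f)$. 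Next apply the composition map of Lemma \ref{lem:composition_continuous}, specialized to the triple of spaces $(\ast,\widetilde{\mathcal{S}}_1,\widetilde{\mathcal{S}}_2)$, namely $(\iota_s, f) \mapsto f \circ \iota_s = \iota_{f(s)} \in \mathcal{A}^c(\ast,\widetilde{\mathcal{S}}_2)$, which is continuous. Finally apply the inverse $\varphi_2^{-1} : \mathcal{A}^c(\ast,\widetilde{\mathcal{S}}_2) \to \widetilde{\mathcal{S}}_2$ of the homeomorphism of Lemma \ref{lem:state_identified_with_dynamics}, sending $\iota_{f(s)}$ back to $f(s)$. The overall composite is precisely $(f,s) \mapsto f(s)$.

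Each of the three stages is continuous: the first because $\varphi_1$ is a homeomorphism and both forming products of maps and swapping the factors of a product are continuous, the second by Lemma \ref{lem:composition_continuous}, and the third because $\varphi_2$ is a homeomorphism. Hence the evaluation map, being a composite of continuous maps, is continuous. I do not anticipate a genuine obstacle here; the only points requiring care are the elementary identity $f \circ \iota_s = \iota_{f(s)}$ and the correct matching of the spaces $(\ast,\widetilde{\mathcal{S}}_1,\widetilde{\mathcal{S}}_2)$ when invoking Lemma \ref{lem:composition_continuous}, so that its composition map specializes to $(\iota_s, f) \mapsto f \circ \iota_s$.
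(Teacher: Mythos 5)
Your proposal is correct and is precisely the argument the paper intends: the paper's proof of this corollary consists of the single line ``This follows from Lemma \ref{lem:composition_continuous} and Lemma \ref{lem:state_identified_with_dynamics}'', and your three-stage factorization through $(f,s) \mapsto (\iota_s,f) \mapsto f \circ \iota_s = \iota_{f(s)} \mapsto f(s)$ is exactly the elaboration of that line. Nothing to add.
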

\begin{proof}
This follows from Lemma \ref{lem:composition_continuous} and Lemma \ref{lem:state_identified_with_dynamics}.
\end{proof}
\begin{lemma}
\label{lem:inverse_homeomorphic}
The map $\varphi:\mathcal{A}^{c*}(\widetilde{\mathcal{S}}_1,\widetilde{\mathcal{S}}_2) \to \mathcal{A}^{c*}(\widetilde{\mathcal{S}}_2,\widetilde{\mathcal{S}}_1)$, $\varphi(f) = f^{-1}$, is a homeomorphism.
Namely, the operation of taking the inverse of a map is homeomorphic.
\end{lemma}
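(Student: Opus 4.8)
The plan is to first reduce the claim to the continuity of a single inversion map, and then to verify that continuity by directly computing the preimages of the subbasic open sets. As a preliminary remark, note that each $f \in \mathcal{A}^{c*}(\widetilde{\mathcal{S}}_1,\widetilde{\mathcal{S}}_2)$ is a continuous bijection from the compact space $\widetilde{\mathcal{S}}_1$ onto the Hausdorff space $\widetilde{\mathcal{S}}_2$, hence a homeomorphism, and the inverse of an affine bijection is again affine; thus $f^{-1} \in \mathcal{A}^{c*}(\widetilde{\mathcal{S}}_2,\widetilde{\mathcal{S}}_1)$ and $\varphi$ is well-defined. The set-theoretic inverse of $\varphi$ is the analogous inversion map $\psi:\mathcal{A}^{c*}(\widetilde{\mathcal{S}}_2,\widetilde{\mathcal{S}}_1) \to \mathcal{A}^{c*}(\widetilde{\mathcal{S}}_1,\widetilde{\mathcal{S}}_2)$, $\psi(g) = g^{-1}$, which is of exactly the same form as $\varphi$ with the roles of $\widetilde{\mathcal{S}}_1$ and $\widetilde{\mathcal{S}}_2$ interchanged. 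Consequently, it suffices to prove that such an inversion map is always continuous; applying this with the two closures swapped then shows that $\psi = \varphi^{-1}$ is continuous as well, so that $\varphi$ is a homeomorphism.

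For continuity, I would show that $\varphi^{-1}(O_{K,U})$ is open for every subbasic member $O_{K,U} \in \widetilde{\mathcal{B}}(\mathcal{S}_2,\mathcal{S}_1)$, where $K$ is an essential closed subset of $\widetilde{\mathcal{S}}_2$ and $U$ is an essential open subset of $\widetilde{\mathcal{S}}_1$. By definition, $\varphi^{-1}(O_{K,U}) = \{f \in \mathcal{A}^{c*}(\widetilde{\mathcal{S}}_1,\widetilde{\mathcal{S}}_2) \mid f^{-1}(K) \subset U\}$. The key elementary observation, which I would state and verify in two lines, is that for a bijection $f$ one has the equivalence $f^{-1}(K) \subset U \Leftrightarrow f(\widetilde{\mathcal{S}}_1 \setminus U) \subset \widetilde{\mathcal{S}}_2 \setminus K$: indeed, if some $x \in \widetilde{\mathcal{S}}_1 \setminus U$ had $f(x) \in K$, then $x = f^{-1}(f(x)) \in f^{-1}(K) \subset U$, a contradiction, and the converse is symmetric. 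Writing $U^c = \widetilde{\mathcal{S}}_1 \setminus U$ and $K^c = \widetilde{\mathcal{S}}_2 \setminus K$, this yields the identity $\varphi^{-1}(O_{K,U}) = O_{U^c,K^c}$ inside $\mathcal{A}^{c*}(\widetilde{\mathcal{S}}_1,\widetilde{\mathcal{S}}_2)$.

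It then remains to recognize the right-hand side as a subbasic open set. By Lemma \ref{lem:essential_subset}(\ref{item:essential_subset_1}), the complement $U^c$ of the essential open set $U$ is an essential closed subset of $\widetilde{\mathcal{S}}_1$, and the complement $K^c$ of the essential closed set $K$ is an essential open subset of $\widetilde{\mathcal{S}}_2$; hence $O_{U^c,K^c}$ is indeed a member of $\widetilde{\mathcal{B}}(\mathcal{S}_1,\mathcal{S}_2)$, and in particular open. This establishes continuity and completes the plan. I expect the main conceptual point to be the conversion of the preimage condition $f^{-1}(K) \subset U$ into the forward-image condition $f(U^c) \subset K^c$: this is precisely where bijectivity is essential, and it is what makes the preimage land again in the subbase. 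The essential-set complement duality of Lemma \ref{lem:essential_subset}(\ref{item:essential_subset_1}) is the second ingredient, guaranteeing that the resulting pair $(U^c,K^c)$ is again an essential closed/open pair and hence that $O_{U^c,K^c}$ is genuinely subbasic rather than merely an image of some non-essential pair.
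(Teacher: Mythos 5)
Your proposal is correct and follows essentially the same route as the paper's own proof: both reduce continuity to the subbasic sets, establish the identity $\varphi^{-1}(O_{K,U}) = O_{U^c,K^c}$ via the complement duality for bijections, invoke Lemma \ref{lem:essential_subset}(\ref{item:essential_subset_1}) to see that $(U^c,K^c)$ is again an essential closed/open pair, and obtain continuity of the inverse by the symmetric argument. No gaps to report.
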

\begin{proof}
First, for each $f \in \mathcal{A}^{c*}(\widetilde{\mathcal{S}}_1,\widetilde{\mathcal{S}}_2)$, $f^{-1}$ is also an affine map, while $f$ is a homeomorphism as $f$ is a continuous bijection from the compact $\widetilde{\mathcal{S}}_1$ to the Hausdorff $\widetilde{\mathcal{S}}_2$.
Hence we have $f^{-1} \in \mathcal{A}^{c*}(\widetilde{\mathcal{S}}_2,\widetilde{\mathcal{S}}_1)$ and $\varphi$ is well-defined.
Note that $\varphi$ is obviously a bijection.
To prove that $\varphi$ is continuous, we show that $\varphi^{-1}(O_{K,U} \cap \mathcal{A}^{c*}(\widetilde{\mathcal{S}}_2,\widetilde{\mathcal{S}}_1)) = O_{U^c,K^c} \cap \mathcal{A}^{c*}(\widetilde{\mathcal{S}}_1,\widetilde{\mathcal{S}}_2)$ for every $O_{K,U} \in \widetilde{\mathcal{B}}(\mathcal{S}_2,\mathcal{S}_1)$ (note that $O_{U^c,K^c} \in \widetilde{\mathcal{B}}(\mathcal{S}_1,\mathcal{S}_2)$, as $U^c$ is closed and essential in $\widetilde{\mathcal{S}}_1$ and $K^c$ is open and essential in $\widetilde{\mathcal{S}}_2$ by Lemma \ref{lem:essential_subset}).
Indeed, for every $g \in O_{K,U} \cap \mathcal{A}^{c*}(\widetilde{\mathcal{S}}_2,\widetilde{\mathcal{S}}_1)$, we have $g(K) \subset U$, therefore $g^{-1}(U^c) \subset K^c$ and $g^{-1} \in O_{U^c,K^c}$ as $g$ is a bijection.
This implies the inclusion $\subset$, and the other inclusion $\supset$ holds similarly.
Therefore $\varphi$ is continuous, and the same argument shows that $\varphi^{-1}$ is also continuous.
Hence $\varphi$ is a homeomorphism, as desired.
\end{proof}
\begin{corollary}
\label{cor:invertible_dynamics_topological_group}
The set $\mathcal{A}^*(\mathcal{S},\mathcal{S})$ (resp., $\mathcal{A}^{c*}(\widetilde{\mathcal{S}},\widetilde{\mathcal{S}})$) forms a topological group with map composition as multiplication, and this group acts continuously on $\mathcal{S}$ (resp., $\widetilde{\mathcal{S}}$) by $f \cdot s = f(s)$ for each $f \in \mathcal{A}^*(\mathcal{S},\mathcal{S})$ (resp., $f \in \mathcal{A}^{c*}(\widetilde{\mathcal{S}},\widetilde{\mathcal{S}})$) and $s \in \mathcal{S}$ (resp., $s \in \widetilde{\mathcal{S}})$.
\end{corollary}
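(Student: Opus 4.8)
The plan is to reduce everything to the continuity statements already established in Lemma \ref{lem:composition_continuous}, Lemma \ref{lem:inverse_homeomorphic}, and Corollary \ref{cor:evaluation_map_continuous}, by first settling the compact case $\mathcal{A}^{c*}(\widetilde{\mathcal{S}},\widetilde{\mathcal{S}})$ and then transporting the result to $\mathcal{A}^*(\mathcal{S},\mathcal{S})$ through the embedding $f \mapsto \widetilde{f}$ of Lemma \ref{lem:dynamics_virtual_states}. First I would check the purely algebraic assertion that $\mathcal{A}^{c*}(\widetilde{\mathcal{S}},\widetilde{\mathcal{S}})$ is a group under composition: it contains $\mathrm{id}_{\widetilde{\mathcal{S}}}$; it is closed under composition, since a composite of bijective continuous affine self-maps is again one; and it is closed under inverses, because, as already observed in the proof of Lemma \ref{lem:inverse_homeomorphic}, a continuous affine bijection of the compact Hausdorff space $\widetilde{\mathcal{S}}$ is automatically a homeomorphism whose inverse is again affine. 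The continuity of multiplication is then exactly Lemma \ref{lem:composition_continuous} (taking $\mathcal{S}_1 = \mathcal{S}_2 = \mathcal{S}_3 = \mathcal{S}$) restricted to the subspace $\mathcal{A}^{c*}(\widetilde{\mathcal{S}},\widetilde{\mathcal{S}}) \times \mathcal{A}^{c*}(\widetilde{\mathcal{S}},\widetilde{\mathcal{S}})$, and the continuity of inversion is Lemma \ref{lem:inverse_homeomorphic} (taking $\mathcal{S}_1 = \mathcal{S}_2 = \mathcal{S}$); together these show that $\mathcal{A}^{c*}(\widetilde{\mathcal{S}},\widetilde{\mathcal{S}})$ is a topological group, which is moreover Hausdorff by Proposition \ref{prop:dynamics_Hausdorff}.

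Next I would handle $\mathcal{A}^*(\mathcal{S},\mathcal{S})$ by realizing it as a topological subgroup. By the compatibility properties collected immediately after Lemma \ref{lem:dynamics_virtual_states}, the map $f \mapsto \widetilde{f}$ sends $\mathcal{A}^*(\mathcal{S},\mathcal{S})$ into $\mathcal{A}^{c*}(\widetilde{\mathcal{S}},\widetilde{\mathcal{S}})$ (invertibility of $f$ yields invertibility of $\widetilde{f}$ with $\widetilde{f}^{-1} = \widetilde{f^{-1}}$), it is injective, and it satisfies $\widetilde{g \circ f} = \widetilde{g} \circ \widetilde{f}$; hence its image is a subgroup of $\mathcal{A}^{c*}(\widetilde{\mathcal{S}},\widetilde{\mathcal{S}})$. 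Since the topology on $\mathcal{A}^*(\mathcal{S},\mathcal{S})$ is by definition the one induced from $\mathcal{A}^{c*}(\widetilde{\mathcal{S}},\widetilde{\mathcal{S}})$ through this embedding, $\mathcal{A}^*(\mathcal{S},\mathcal{S})$ is isomorphic, as a topological group, to a subgroup of a topological group, and is therefore itself a topological group; in particular its multiplication and inversion are continuous, being the restrictions of the corresponding continuous maps on $\mathcal{A}^{c*}(\widetilde{\mathcal{S}},\widetilde{\mathcal{S}})$.

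Finally, for the continuous actions, I would note that the map $\mathcal{A}^{c*}(\widetilde{\mathcal{S}},\widetilde{\mathcal{S}}) \times \widetilde{\mathcal{S}} \to \widetilde{\mathcal{S}}$, $(f,s) \mapsto f(s)$, is the restriction of the evaluation map of Corollary \ref{cor:evaluation_map_continuous} and is hence continuous, while the action axioms $(g \circ f)(s) = g(f(s))$ and $\mathrm{id}_{\widetilde{\mathcal{S}}}(s) = s$ are immediate. For the action of $\mathcal{A}^*(\mathcal{S},\mathcal{S})$ on $\mathcal{S}$, observe that for $f \in \mathcal{A}^*(\mathcal{S},\mathcal{S})$ and $s \in \mathcal{S}$ we have $f(s) = \widetilde{f}(s) \in \mathcal{S}$ (indeed $f(\mathcal{S}) = \mathcal{S}$), so $(f,s) \mapsto f(s)$ is precisely the restriction of the continuous action above to the subspace $\mathcal{A}^*(\mathcal{S},\mathcal{S}) \times \mathcal{S}$ of $\mathcal{A}^{c*}(\widetilde{\mathcal{S}},\widetilde{\mathcal{S}}) \times \widetilde{\mathcal{S}}$, corestricted to $\mathcal{S}$, and is therefore continuous as well. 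I do not anticipate a genuine obstacle here; the only point demanding care is the bookkeeping required to transfer statements from $\widetilde{\mathcal{S}}$ to $\mathcal{S}$ along the embeddings and their subspace topologies, and in particular the verification, via $\widetilde{f^{-1}} = \widetilde{f}^{-1}$, that inverses of elements of $\mathcal{A}^*(\mathcal{S},\mathcal{S})$ again lie in $\mathcal{A}^*(\mathcal{S},\mathcal{S})$, so that the embedded image is indeed a subgroup.
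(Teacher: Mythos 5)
Your proposal is correct and follows essentially the same route as the paper, whose proof is precisely a citation of Lemma \ref{lem:composition_continuous}, Lemma \ref{lem:inverse_homeomorphic}, and Corollary \ref{cor:evaluation_map_continuous}; your write-up simply makes explicit the bookkeeping (group axioms, transfer to $\mathcal{A}^*(\mathcal{S},\mathcal{S})$ via $f \mapsto \widetilde{f}$ and the compatibility properties, restriction of the evaluation map) that the paper leaves implicit. No gaps.
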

\begin{proof}
This follows from Lemma \ref{lem:composition_continuous}, Lemma \ref{lem:inverse_homeomorphic}, and Corollary \ref{cor:evaluation_map_continuous}.
\end{proof}

Finally, we discuss the following sort of \lq\lq coordinate expressions'' of affine maps.
We choose and fix any (possibly infinite) subset $B \subset \mathcal{S}_1$, and define an affine map $\iota_B$ from $\mathcal{A}(\mathcal{S}_1,\mathcal{S}_2)$ (resp., $\mathcal{A}^c(\widetilde{\mathcal{S}}_1,\widetilde{\mathcal{S}}_2)$) to the set $\mathcal{S}_2{}^B$ (resp., $\widetilde{\mathcal{S}}_2{}^B$) of all mappings $B \to \mathcal{S}_2$ (resp., $B \to \widetilde{\mathcal{S}}_2$) by $\iota_B(f) = f|_B$, $f \in \mathcal{A}(\mathcal{S}_1,\mathcal{S}_2)$ (resp., $f \in \mathcal{A}^c(\widetilde{\mathcal{S}}_1,\widetilde{\mathcal{S}}_2)$).
Note that $\iota_B$ is injective if $\mathcal{S}_1 \subset \mathrm{Aff}(B)$ (see Lemma \ref{lem:extends_to_affine}).
Now the set $\mathcal{S}_2{}^B$ (resp., $\widetilde{\mathcal{S}}_2{}^B$) admits a natural topology that is the weakest topology to make, for every $b \in B$, the \lq\lq evaluation map'' $\mathrm{ev}_b:\mathcal{S}_2{}^B \ni f \mapsto f(b) \in \mathcal{S}_2$ (resp., $\mathrm{ev}_b:\widetilde{\mathcal{S}}_2{}^B \ni f \mapsto f(b) \in \widetilde{\mathcal{S}}_2$) continuous.
This topological space is nothing but the direct product space of copies of $\mathcal{S}_2$ (resp., $\widetilde{\mathcal{S}}_2$) over $b \in B$, therefore $\mathcal{S}_2{}^B$ (resp., $\widetilde{\mathcal{S}}_2{}^B$) is Hausdorff as well as $\mathcal{S}_2$ (resp., $\widetilde{\mathcal{S}}_2$).
By the Tychonoff's Theorem, $\widetilde{\mathcal{S}}_2{}^B$ is compact, while $\mathcal{S}_2{}^B$ is compact if (and only if) $\mathcal{S}_2$ is compact.
Now we have the following property:
\begin{lemma}
\label{lem:iota_B_continuous}
Under the above setting, the map $\iota_B$ is continuous.
\end{lemma}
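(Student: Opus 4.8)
The plan is to exploit the fact that the target $\mathcal{S}_2{}^B$ (resp.\ $\widetilde{\mathcal{S}}_2{}^B$) carries, by construction, the weakest topology making every evaluation map $\mathrm{ev}_b$ continuous; this is precisely the product topology. By the universal property of the product (weak) topology, a map into $\mathcal{S}_2{}^B$ is continuous if and only if each of its composites with the projections $\mathrm{ev}_b$ is continuous. Hence it suffices to verify that $\mathrm{ev}_b \circ \iota_B$ is continuous for every fixed $b \in B$.

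The key observation is that this composite is nothing but a partial evaluation: for $f \in \mathcal{A}^c(\widetilde{\mathcal{S}}_1,\widetilde{\mathcal{S}}_2)$ we have $(\mathrm{ev}_b \circ \iota_B)(f) = (f|_B)(b) = f(b)$, so $\mathrm{ev}_b \circ \iota_B$ is the map $f \mapsto f(b)$ obtained from the joint evaluation map $\mathcal{A}^c(\widetilde{\mathcal{S}}_1,\widetilde{\mathcal{S}}_2) \times \widetilde{\mathcal{S}}_1 \to \widetilde{\mathcal{S}}_2$ by fixing its second argument at $b \in B \subset \mathcal{S}_1 \subset \widetilde{\mathcal{S}}_1$. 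Since restricting a continuous map on a product to a slice $\mathcal{A}^c(\widetilde{\mathcal{S}}_1,\widetilde{\mathcal{S}}_2) \times \{b\}$ is again continuous, Corollary \ref{cor:evaluation_map_continuous} immediately yields the continuity of $f \mapsto f(b)$, and hence of each $\mathrm{ev}_b \circ \iota_B$. This settles the case of $\mathcal{A}^c(\widetilde{\mathcal{S}}_1,\widetilde{\mathcal{S}}_2)$.

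For the case of $\mathcal{A}(\mathcal{S}_1,\mathcal{S}_2)$, I would transport the argument through the bijection $f \mapsto \widetilde{f}$ of Lemma \ref{lem:dynamics_virtual_states}, which by Definition \ref{defn:modified_compact_open_topology} identifies $\mathcal{A}(\mathcal{S}_1,\mathcal{S}_2)$ (with its defining topology) homeomorphically with the subspace $\widetilde{\mathcal{A}}(\widetilde{\mathcal{S}}_1,\widetilde{\mathcal{S}}_2)$ of $\mathcal{A}^c(\widetilde{\mathcal{S}}_1,\widetilde{\mathcal{S}}_2)$. Since $b \in \mathcal{S}_1$ and $\widetilde{f}|_{\mathcal{S}_1} = f$, we have $\widetilde{f}(b) = f(b) \in \mathcal{S}_2$, so $\mathrm{ev}_b \circ \iota_B$ again coincides with $f \mapsto f(b)$, now viewed as a map into $\mathcal{S}_2$. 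Its continuity as a map into $\widetilde{\mathcal{S}}_2$ follows from the previous paragraph, and because its image lies in $\mathcal{S}_2$, this gives continuity into $\mathcal{S}_2$ with the subspace topology; the universal property of the product topology on $\mathcal{S}_2{}^B$ then finishes the proof.

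I do not expect a genuine obstacle here: the substantive content --- namely the joint continuity of the evaluation map under the essential-subset analogue of the compact-open topology --- is already packaged in Corollary \ref{cor:evaluation_map_continuous}, and the present lemma is a formal consequence of the universal property of product topologies. The only point requiring a moment's care is the bookkeeping in the $\mathcal{A}(\mathcal{S}_1,\mathcal{S}_2)$ case, namely checking that fixing $b$ inside $\mathcal{S}_1$ (rather than merely in $\widetilde{\mathcal{S}}_1$) keeps all values in $\mathcal{S}_2$, so that the inclusion of target spaces $\mathcal{S}_2{}^B \subset \widetilde{\mathcal{S}}_2{}^B$ is respected and one may pass to the subspace topology on $\mathcal{S}_2$ without losing continuity.
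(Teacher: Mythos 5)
Your proposal is correct, and its overall skeleton (reduce to the coordinate maps $\mathrm{ev}_b\circ\iota_B$ via the universal property of the product topology, then handle $\mathcal{A}(\mathcal{S}_1,\mathcal{S}_2)$ by factoring through the homeomorphism $f\mapsto\widetilde{f}$) matches the paper's. The one genuine difference lies in how the key step --- continuity of $f\mapsto f(b)$ --- is justified. You obtain it by slicing the jointly continuous evaluation map of Corollary \ref{cor:evaluation_map_continuous}; this is legitimate (that corollary precedes the lemma and does not depend on it, so there is no circularity), but it imports the heavier machinery behind Lemma \ref{lem:composition_continuous} and Lemma \ref{lem:state_identified_with_dynamics}. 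The paper instead verifies the step directly from Definition \ref{defn:modified_compact_open_topology}: for an essential open $U\subset\widetilde{\mathcal{S}}_2$ one has $\varphi_b^{-1}(U)=O_{\{b\},U}$, where $\{b\}$ is an essential closed set precisely because $b\in\mathcal{S}_1$, and every open subset of $\widetilde{\mathcal{S}}_2$ is a union of essential opens by Lemma \ref{lem:closed_convex_neighborhood}. The direct computation is self-contained and makes visible exactly why $B$ must lie in $\mathcal{S}_1$ rather than merely in $\widetilde{\mathcal{S}}_1$; your route is a cleaner formal reduction once the evaluation corollary is in hand. One point you handle correctly but should keep explicit: passing from continuity into $\widetilde{\mathcal{S}}_2$ to continuity into $\mathcal{S}_2$ uses that the topology of $\mathcal{S}_2$ from Proposition \ref{prop:minimal_framework} is the subspace topology inherited from $\widetilde{\mathcal{S}}_2$ (via $V(\mathcal{S}_2)\subset\widetilde{V}(\mathcal{S}_2)$); the paper's composition argument relies on the same identification implicitly.
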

\begin{proof}
First, we prove that $\iota_B:\mathcal{A}^c(\widetilde{\mathcal{S}}_1,\widetilde{\mathcal{S}}_2) \to \widetilde{\mathcal{S}}_2{}^B$ is continuous.
It suffices to show that for each $b \in B$, the composite map $\varphi_b = \mathrm{ev}_b \circ \iota_B:\mathcal{A}^c(\widetilde{\mathcal{S}}_1,\widetilde{\mathcal{S}}_2) \to \widetilde{\mathcal{S}}_2$ given by $\varphi_b(f) = f(b)$ is continuous.
Now by Lemma \ref{lem:closed_convex_neighborhood}, each open subset of $\widetilde{\mathcal{S}}_2$ is the union of essential open subsets $U$ of $\widetilde{\mathcal{S}}_2$, and we have $\varphi_b^{-1}(U) = O_{\{b\},U} \in \widetilde{\mathcal{B}}(\mathcal{S}_1,\mathcal{S}_2)$ for any such $U$ (note that $\{b\} \subset B \subset \mathcal{S}_1$, therefore the closed subset $\{b\}$ is essential).
This implies that $\varphi_b$ is continuous, as desired.

Secondly, for the map $\iota_B:\mathcal{A}(\mathcal{S}_1,\mathcal{S}_2) \to \mathcal{S}_2{}^B$, this map is the composition of the continuous map $\mathcal{A}(\mathcal{S}_1,\mathcal{S}_2) \to \mathcal{A}^c(\widetilde{\mathcal{S}}_1,\widetilde{\mathcal{S}}_2)$, $f \mapsto \widetilde{f}$ given by Lemma \ref{lem:dynamics_virtual_states}, followed by the continuous map $\mathcal{A}^c(\widetilde{\mathcal{S}}_1,\widetilde{\mathcal{S}}_2) \to \widetilde{\mathcal{S}}_2{}^B$, $f \mapsto f|_B$ studied in the previous paragraph.
Hence $\iota_B$ is also continuous.
\end{proof}

\subsection{Finite-dimensional cases}
\label{subsec:finite_dimensional}

In this subsection, we study the case of finite-dimensional convex sets and give some enhancements of the above general results.
First note that, if $\dim(\mathcal{S}) = n < \infty$, then the locally convex Hausdorff topological vector space $V(\mathcal{S})$ is naturally identified with the $n$-dimensional Euclidean space $\mathbb{R}^n$, we have $\widetilde{V}(\mathcal{S}) = V(\mathcal{S})$, and $\widetilde{\mathcal{S}}$ is the closure of $\mathcal{S}$ in $V(\mathcal{S}) = \mathbb{R}^n$.
Now we have the following three properties for inclusion relations of the sets of affine maps:
\begin{lemma}
\label{lem:invertible_set_closed}
Suppose that $\dim(\mathcal{S}_1) < \infty$.
\begin{enumerate}
\item \label{item:invertible_set_closed_1}
If $\mathcal{A}^{c*}(\widetilde{\mathcal{S}}_1,\widetilde{\mathcal{S}}_2) \neq \emptyset$, then we have $\dim(\mathcal{S}_1) = \dim(\mathcal{S}_2)$, every surjective map in $\mathcal{A}^c(\widetilde{\mathcal{S}}_1,\widetilde{\mathcal{S}}_2)$ is a bijection, and $\mathcal{A}^{c*}(\widetilde{\mathcal{S}}_1,\widetilde{\mathcal{S}}_2)$ is closed in $\mathcal{A}^c(\widetilde{\mathcal{S}}_1,\widetilde{\mathcal{S}}_2)$.
\item \label{item:invertible_set_closed_2}
If $\mathcal{A}^*(\mathcal{S}_1,\mathcal{S}_2) \neq \emptyset$, then we have $\dim(\mathcal{S}_1) = \dim(\mathcal{S}_2)$ and every surjective map in $\mathcal{A}(\mathcal{S}_1,\mathcal{S}_2)$ is a bijection.
Moreover, if $\mathcal{S}_1$ is compact, then $\mathcal{A}^*(\mathcal{S}_1,\mathcal{S}_2)$ is closed in $\mathcal{A}(\mathcal{S}_1,\mathcal{S}_2)$.
\end{enumerate}
\end{lemma}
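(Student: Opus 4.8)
The plan is to split the lemma into two purely affine-algebraic facts, valid in finite dimensions, plus one appeal to the already-proved closedness of the set of surjections (Lemma \ref{lem:surjective_set_closed}). I will handle both items in parallel, writing $\widehat{\mathcal{S}}_i$ for either $\widetilde{\mathcal{S}}_i$ (item \ref{item:invertible_set_closed_1}) or $\mathcal{S}_i$ (item \ref{item:invertible_set_closed_2}), and using throughout that in finite dimensions every affine map on a convex set with full affine hull extends to an affine map of the ambient Euclidean space, exactly as in the proof of Lemma \ref{lem:extends_to_affine}.

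First I would prove $\dim(\mathcal{S}_1)=\dim(\mathcal{S}_2)$ (which in particular forces $\mathcal{S}_2$ to be finite-dimensional). Let $\phi$ be a bijective element of $\mathcal{A}^{c*}(\widetilde{\mathcal{S}}_1,\widetilde{\mathcal{S}}_2)$ (resp. $\mathcal{A}^*(\mathcal{S}_1,\mathcal{S}_2)$). Since $\mathrm{Aff}(\widehat{\mathcal{S}}_1)=\mathbb{R}^{n}$ with $n=\dim(\mathcal{S}_1)$, the map $\phi$ extends to an affine map $\overline{\phi}$ from $\mathbb{R}^{n}$ into the ambient vector space of $\widehat{\mathcal{S}}_2$. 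The image $\overline{\phi}(\mathbb{R}^{n})$ is an affine subspace of dimension at most $n$ containing $\phi(\widehat{\mathcal{S}}_1)=\widehat{\mathcal{S}}_2$, hence containing $\mathrm{Aff}(\widehat{\mathcal{S}}_2)$; this already yields $\dim(\mathcal{S}_2)\leq n<\infty$. Because the set-theoretic inverse $\phi^{-1}$ is again affine (a direct check: apply $\phi$ to the affinity identity and use injectivity), and $\mathcal{S}_2$ is now known to be finite-dimensional, the symmetric argument applied to $\phi^{-1}$ gives $\dim(\mathcal{S}_1)\leq\dim(\mathcal{S}_2)$, whence equality.

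Next, with $n=\dim(\mathcal{S}_1)=\dim(\mathcal{S}_2)$ both affine hulls equal $\mathbb{R}^{n}$, I would show that any surjective $h$ is a bijection. Extend $h$ to an affine self-map $\overline{h}$ of $\mathbb{R}^{n}$. Its image $\overline{h}(\mathbb{R}^{n})$ is an affine subspace containing $h(\widehat{\mathcal{S}}_1)=\widehat{\mathcal{S}}_2$, hence all of $\mathbb{R}^{n}$, so $\overline{h}$ is a surjective affine endomorphism of $\mathbb{R}^{n}$. Its linear part is then a surjective, hence (in finite dimension) bijective, endomorphism of $\mathbb{R}^{n}$, so $\overline{h}$ is injective and therefore so is $h=\overline{h}|_{\widehat{\mathcal{S}}_1}$. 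Being both injective and surjective, $h$ is a bijection; this establishes the middle assertions of both items.

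Finally, these two steps show that in the present finite-dimensional regime a map is bijective precisely when it is surjective (onto $\widehat{\mathcal{S}}_2$). Hence $\mathcal{A}^{c*}(\widetilde{\mathcal{S}}_1,\widetilde{\mathcal{S}}_2)$ coincides with the set of surjective maps in $\mathcal{A}^c(\widetilde{\mathcal{S}}_1,\widetilde{\mathcal{S}}_2)$, which is closed by Lemma \ref{lem:surjective_set_closed}(\ref{item:surjective_set_closed_1}); this completes item (\ref{item:invertible_set_closed_1}). For item (\ref{item:invertible_set_closed_2}), when $\mathcal{S}_1$ is compact the same identification together with Lemma \ref{lem:surjective_set_closed}(\ref{item:surjective_set_closed_2}) shows that $\mathcal{A}^*(\mathcal{S}_1,\mathcal{S}_2)$ is closed in $\mathcal{A}(\mathcal{S}_1,\mathcal{S}_2)$. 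I expect the main obstacle to be organizational rather than technical: one must secure the finite-dimensionality of $\mathcal{S}_2$ from the nonemptiness hypothesis \emph{before} any extension argument is legitimate, and keep straight that surjectivity is always meant onto $\widehat{\mathcal{S}}_2$, so that the identification of bijections with surjections matches exactly the sets to which Lemma \ref{lem:surjective_set_closed} applies; compactness of $\mathcal{S}_1$ enters only in this last closedness assertion.
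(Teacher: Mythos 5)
Your proposal is correct and follows essentially the same route as the paper's proof: extend the affine maps to the ambient finite-dimensional spaces, deduce $\dim(\mathcal{S}_1)=\dim(\mathcal{S}_2)$, show that surjectivity forces injectivity by linear algebra, and then obtain closedness by identifying the bijections with the surjections and invoking Lemma \ref{lem:surjective_set_closed}. The only (inessential) difference is in how the dimension equality is established --- the paper composes the extensions $\overline{f}$ and $\overline{g}=\overline{f^{-1}}$ to get an affine isomorphism of the ambient spaces, while you use two one-sided image-dimension inequalities, first securing $\dim(\mathcal{S}_2)<\infty$ so that $V(\mathcal{S}_2)=\widetilde{V}(\mathcal{S}_2)$ before applying the symmetric argument.
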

\begin{proof}
We prove the two claims in parallel.
First note that $\widetilde{V}(\mathcal{S}_1) = V(\mathcal{S}_1)$ by the assumption.
Take an $f \in \mathcal{A}^{c*}(\widetilde{\mathcal{S}}_1,\widetilde{\mathcal{S}}_2)$ (resp., $f \in \mathcal{A}^*(\mathcal{S}_1,\mathcal{S}_2)$).
Then, as the affine hulls of $\widetilde{\mathcal{S}}_1$ and $\widetilde{\mathcal{S}}_2$ (resp., $\mathcal{S}_1$ and $\mathcal{S}_2$) are the whole underlying spaces, the maps $f$ and $g = f^{-1}$ extend uniquely to affine maps $\overline{f}:V(\mathcal{S}_1) \to \widetilde{V}(\mathcal{S}_2)$ and $\overline{g}:\widetilde{V}(\mathcal{S}_2) \to V(\mathcal{S}_1)$ (resp., $\overline{f}:V(\mathcal{S}_1) \to V(\mathcal{S}_2)$ and $\overline{g}:V(\mathcal{S}_2) \to V(\mathcal{S}_1)$), respectively.
Now we have $\overline{g} \circ \overline{f} = \mathrm{id}_{V(\mathcal{S}_1)}$, as both of the two maps in the left-hand and the right-hand sides are affine extensions of $g \circ f = \mathrm{id}_{\widetilde{\mathcal{S}}_1}$ (resp., $g \circ f = \mathrm{id}_{\mathcal{S}_1}$).
Similarly, we also have $\overline{f} \circ \overline{g} = \mathrm{id}_{\widetilde{V}(\mathcal{S}_2)}$ (resp., $\overline{f} \circ \overline{g} = \mathrm{id}_{V(\mathcal{S}_2)}$).
Therefore $V(\mathcal{S}_1)$ and $\widetilde{V}(\mathcal{S}_2)$ (resp., $V(\mathcal{S}_2)$) are affine isomorphic, hence $\dim(\widetilde{\mathcal{S}}_2) = \dim(\mathcal{S}_1) < \infty$ (resp., $\dim(\mathcal{S}_2) = \dim(\mathcal{S}_1) < \infty$).
This implies that $\dim(\widetilde{\mathcal{S}}_2) = \dim(\mathcal{S}_2)$ and $\widetilde{V}(\mathcal{S}_2) = V(\mathcal{S}_2)$, therefore $\dim(\widetilde{\mathcal{S}}_2) = \dim(\mathcal{S}_2) = \dim(\mathcal{S}_1)$.
Now each surjective $h \in \mathcal{A}^c(\widetilde{\mathcal{S}}_1,\widetilde{\mathcal{S}}_2)$ (resp., $h \in \mathcal{A}(\mathcal{S}_1,\mathcal{S}_2)$) extends to an affine map $\overline{h}:V(\mathcal{S}_1) \to V(\mathcal{S}_2)$, which is also surjective as the image of $\overline{h}$ is convex and contains $\mathcal{S}_2$.
As $V(\mathcal{S}_1)$ and $V(\mathcal{S}_2)$ have the same finite dimension, this surjective affine map $\overline{h}$ is also injective, so is $h$ (hence $h$ is bijective).
Finally, the remaining part of the claim now follows from Lemma \ref{lem:surjective_set_closed}.
\end{proof}
\begin{corollary}
\label{cor:invertible_dynamics_closed_topological_group}
Suppose that $\dim(\mathcal{S}) < \infty$.
Then the topological group $\mathcal{A}^{c*}(\widetilde{\mathcal{S}},\widetilde{\mathcal{S}})$ is closed in $\mathcal{A}^c(\widetilde{\mathcal{S}},\widetilde{\mathcal{S}})$.
Moreover, if $\mathcal{S}$ is compact, then the topological group $\mathcal{A}^*(\mathcal{S},\mathcal{S})$ is closed in $\mathcal{A}(\mathcal{S},\mathcal{S})$.
\end{corollary}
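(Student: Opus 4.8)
The plan is to recognize this corollary as simply the diagonal special case of Lemma \ref{lem:invertible_set_closed}, obtained by setting $\mathcal{S}_1 = \mathcal{S}_2 = \mathcal{S}$. Indeed, that lemma already establishes closedness of the sets of bijective maps under the hypotheses of finite dimensionality and non-emptiness of the relevant set of bijections, and here both hypotheses hold almost for free. First I would invoke the standing assumption $\dim(\mathcal{S}) < \infty$ to supply the dimensional hypothesis of the lemma. Then the only remaining thing to verify is that the sets of bijections in question are non-empty.

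For this, I would observe that the identity map $\mathrm{id}_{\widetilde{\mathcal{S}}}$ is a continuous affine bijection of $\widetilde{\mathcal{S}}$, so that $\mathcal{A}^{c*}(\widetilde{\mathcal{S}},\widetilde{\mathcal{S}}) \ni \mathrm{id}_{\widetilde{\mathcal{S}}}$ is non-empty; applying Lemma \ref{lem:invertible_set_closed}(\ref{item:invertible_set_closed_1}) with $\mathcal{S}_1 = \mathcal{S}_2 = \mathcal{S}$ then yields that $\mathcal{A}^{c*}(\widetilde{\mathcal{S}},\widetilde{\mathcal{S}})$ is closed in $\mathcal{A}^c(\widetilde{\mathcal{S}},\widetilde{\mathcal{S}})$. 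Likewise, when $\mathcal{S}$ is compact, the identity map $\mathrm{id}_{\mathcal{S}}$ witnesses that $\mathcal{A}^*(\mathcal{S},\mathcal{S})$ is non-empty, and since $\mathcal{S}_1 = \mathcal{S}$ is then compact, the \lq\lq moreover'' part of Lemma \ref{lem:invertible_set_closed}(\ref{item:invertible_set_closed_2}) applies to give that $\mathcal{A}^*(\mathcal{S},\mathcal{S})$ is closed in $\mathcal{A}(\mathcal{S},\mathcal{S})$. Finally, the assertion that these sets are \emph{topological groups} is not new content but is simply quoted from Corollary \ref{cor:invertible_dynamics_topological_group}, so the corollary follows by combining that structural statement with the two closedness conclusions just obtained.

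There is essentially no substantive obstacle here: the entire argument is a direct specialization of an already-proved lemma, and the only point requiring any comment at all is the trivial verification of non-emptiness via the identity map. Accordingly I expect the proof to occupy only a couple of sentences, its role being to record the diagonal instance of Lemma \ref{lem:invertible_set_closed} for convenient later reference.
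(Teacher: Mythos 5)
Your proposal is correct and matches the paper's own proof, which simply states that the corollary follows immediately from Lemma \ref{lem:invertible_set_closed}; your only addition is to spell out the trivial non-emptiness verification via the identity map (which is harmless, and in any case the empty set would also be closed). Nothing further is needed.
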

\begin{proof}
This follows immediately from Lemma \ref{lem:invertible_set_closed}.
\end{proof}
\begin{proposition}
\label{prop:Atilde_dense_in_A_finite_dim}
If $\dim(\mathcal{S}_2) < \infty$, then both $\mathcal{A}^c(\widetilde{\mathcal{S}}_1,\mathcal{S}_2)$ and $\widetilde{\mathcal{A}}(\widetilde{\mathcal{S}}_1,\widetilde{\mathcal{S}}_2)$ are dense in $\mathcal{A}^c(\widetilde{\mathcal{S}}_1,\widetilde{\mathcal{S}}_2)$.
\end{proposition}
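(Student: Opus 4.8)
The plan is to reduce this statement directly to Proposition \ref{prop:Atilde_dense_in_A}, whose conclusion is exactly the desired density statement but under the hypothesis that $\mathrm{int}_{\widetilde{V}(\mathcal{S}_2)}(\mathcal{S}_2) \neq \emptyset$. Thus it suffices to show that the finite-dimensionality assumption $\dim(\mathcal{S}_2) < \infty$ forces $\mathcal{S}_2$ to have nonempty interior in $\widetilde{V}(\mathcal{S}_2)$; everything else will follow for free.

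First I would record the structural consequences of finite-dimensionality noted after Proposition \ref{prop:minimal_framework}: when $\dim(\mathcal{S}_2) = n < \infty$ we have $V(\mathcal{S}_2) = \widetilde{V}(\mathcal{S}_2)$, this space is (naturally identified with) the Euclidean space $\mathbb{R}^n$, and moreover $\mathrm{Aff}(\mathcal{S}_2) = V(\mathcal{S}_2)$ by Proposition \ref{prop:minimal_framework}. Hence the ambient space in which the interior is taken is genuinely the finite-dimensional space $\mathbb{R}^n$, and $\mathcal{S}_2$ is a convex subset of $\mathbb{R}^n$ whose affine hull is all of $\mathbb{R}^n$.

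Next I would invoke the standard convex-geometric fact that a convex subset whose affine hull is the whole of a finite-dimensional space has nonempty interior. Concretely, since $\mathrm{Aff}(\mathcal{S}_2) = \mathbb{R}^n$, the set $\mathcal{S}_2$ contains $n+1$ affinely independent points; their convex hull is an $n$-dimensional simplex, which has nonempty interior in $\mathbb{R}^n$ and is contained in $\mathcal{S}_2$ by convexity. Therefore $\mathrm{int}_{\widetilde{V}(\mathcal{S}_2)}(\mathcal{S}_2) \neq \emptyset$.

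Finally, with this interior hypothesis verified, Proposition \ref{prop:Atilde_dense_in_A} applies verbatim and yields both that $\mathcal{A}^c(\widetilde{\mathcal{S}}_1,\mathcal{S}_2)$ is dense in $\mathcal{A}^c(\widetilde{\mathcal{S}}_1,\widetilde{\mathcal{S}}_2)$ and, since $\widetilde{\mathcal{A}}(\widetilde{\mathcal{S}}_1,\widetilde{\mathcal{S}}_2) \supset \mathcal{A}^c(\widetilde{\mathcal{S}}_1,\mathcal{S}_2)$, that $\widetilde{\mathcal{A}}(\widetilde{\mathcal{S}}_1,\widetilde{\mathcal{S}}_2)$ is dense as well. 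I expect no hard obstacle here: the entire content is the elementary passage from a full-dimensional affine hull to nonempty interior, and the remainder is immediate from the already-established Proposition \ref{prop:Atilde_dense_in_A}. The only point warranting mild care is the bookkeeping that $\widetilde{V}(\mathcal{S}_2)$ coincides with $V(\mathcal{S}_2) = \mathrm{Aff}(\mathcal{S}_2)$ in the finite-dimensional case, so that the interior computed in $\widetilde{V}(\mathcal{S}_2)$ is genuinely the full-dimensional one rather than a relative interior inside a proper subspace.
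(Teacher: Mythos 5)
Your proposal is correct and follows essentially the same route as the paper: both reduce to Proposition \ref{prop:Atilde_dense_in_A} by verifying that $\mathrm{int}_{\widetilde{V}(\mathcal{S}_2)}(\mathcal{S}_2) \neq \emptyset$, using the identifications $\widetilde{V}(\mathcal{S}_2) = V(\mathcal{S}_2) = \mathrm{Aff}(\mathcal{S}_2)$ in the finite-dimensional case. The only cosmetic difference is that you prove the full-dimensional-affine-hull-implies-nonempty-interior fact directly via an inscribed simplex, whereas the paper simply cites it (Gr\"{u}nbaum, Proposition 2.1.7).
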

\begin{proof}
Under the assumption, $\widetilde{V}(\mathcal{S}_2) = V(\mathcal{S}_2)$ is a finite-dimensional Euclidean space such that $\mathrm{Aff}(\mathcal{S}_2) = V(\mathcal{S}_2)$, therefore $\mathrm{int}_{\widetilde{V}(\mathcal{S}_2)}(\mathcal{S}_2) \neq \emptyset$ (see e.g., \cite[Proposition 2.1.7]{Gru}).
Hence the condition in Proposition \ref{prop:Atilde_dense_in_A} is satisfied, therefore the claim follows from Proposition \ref{prop:Atilde_dense_in_A}.
\end{proof}

When $\mathcal{S}$ is finite-dimensional, let $d_{\mathcal{S}}$ denote the Euclidean metric on $\widetilde{V}(\mathcal{S}) = V(\mathcal{S}) = \mathbb{R}^{\dim(\mathcal{S})}$.
Then $\widetilde{\mathcal{S}}$ is a compact (hence complete) metric space with respect to $d_{\mathcal{S}}$.
Now if both $\mathcal{S}_1$ and $\mathcal{S}_2$ are finite-dimensional, then the set $C(\widetilde{\mathcal{S}}_1,\widetilde{\mathcal{S}}_2)$ is also a complete metric space with respect to the metric $d_{\infty}$ defined by
\begin{equation}
d_{\infty}(f,g) = \sup_{s \in \widetilde{\mathcal{S}}_1} d_{\mathcal{S}_2}(f(s),g(s)) \mbox{ for } f,g \in C(\widetilde{\mathcal{S}}_1,\widetilde{\mathcal{S}}_2) \enspace.
\end{equation}
(In fact, for each pair $(f,g)$ the supremum is attained by some $s \in \widetilde{\mathcal{S}}_1$, as the function $s \mapsto d_{\mathcal{S}_2}(f(s),g(s))$ is continuous on the compact space $\widetilde{\mathcal{S}}_1$.)
For any sequence in $C(\widetilde{\mathcal{S}}_1,\widetilde{\mathcal{S}}_2)$, convergence with respect to the metric $d_{\infty}$ is equivalent to the uniform convergence of mappings over $\widetilde{\mathcal{S}}_1$.
Note that the topology on $C(\widetilde{\mathcal{S}}_1,\widetilde{\mathcal{S}}_2)$ determined by $d_{\infty}$ coincides with the compact-open topology (see e.g., \cite[Section c-20.5]{top_book}).
To avoid confusion, we write $C_{d_{\infty}}(\widetilde{\mathcal{S}}_1,\widetilde{\mathcal{S}}_2)$ to signify the set $C(\widetilde{\mathcal{S}}_1,\widetilde{\mathcal{S}}_2)$ endowed with the topology determined by $d_{\infty}$ rather than the original topology defined in Definition \ref{defn:modified_compact_open_topology}.
Note that the topology of $C(\widetilde{\mathcal{S}}_1,\widetilde{\mathcal{S}}_2)$ is weaker than or equal to that of $C_{d_{\infty}}(\widetilde{\mathcal{S}}_1,\widetilde{\mathcal{S}}_2)$, while these are equal if both $\mathcal{S}_1$ and $\mathcal{S}_2$ are compact.
Now we have the following fundamental result:
\begin{proposition}
\label{prop:A_is_compact}
If both $\mathcal{S}_1$ and $\mathcal{S}_2$ are finite-dimensional, then the topology on $\mathcal{A}^c(\widetilde{\mathcal{S}}_1,\widetilde{\mathcal{S}}_2)$ coincides with the topology determined by the metric $d_{\infty}$ (or equivalently, the compact-open topology), and $\mathcal{A}^c(\widetilde{\mathcal{S}}_1,\widetilde{\mathcal{S}}_2)$ is compact.
\end{proposition}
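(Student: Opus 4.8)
The plan is to prove both assertions at once, exploiting that the topology of Definition~\ref{defn:modified_compact_open_topology} is always coarser than (or equal to) the one determined by $d_\infty$. Write $\tau_{\mathrm{ess}}$ for the former and $\tau_\infty$ for the latter topology on $\mathcal{A}^c(\widetilde{\mathcal{S}}_1,\widetilde{\mathcal{S}}_2)$. I would first show that $(\mathcal{A}^c(\widetilde{\mathcal{S}}_1,\widetilde{\mathcal{S}}_2),\tau_\infty)$ is compact. Granting this, the identity map from $(\mathcal{A}^c,\tau_\infty)$ to $(\mathcal{A}^c,\tau_{\mathrm{ess}})$ is a continuous bijection (continuous because $\tau_{\mathrm{ess}} \subseteq \tau_\infty$) from a compact space onto the Hausdorff space guaranteed by Proposition~\ref{prop:dynamics_Hausdorff}; hence it is a homeomorphism, so $\tau_{\mathrm{ess}} = \tau_\infty$ and both spaces are compact. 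Thus the whole statement reduces to the compactness of $\mathcal{A}^c$ in the (compact-open, $=$ $d_\infty$) topology, which is where finite-dimensionality must be used.

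To establish this compactness I would realize $\mathcal{A}^c$ as a closed subspace of a finite product. Put $n = \dim(\mathcal{S}_1)$. Since $\mathrm{Aff}(\mathcal{S}_1) = V(\mathcal{S}_1) = \mathbb{R}^n$, the set $\mathcal{S}_1$ contains an affine basis $B = \{b_0,\dots,b_n\}$, and every $s \in \widetilde{\mathcal{S}}_1$ has unique barycentric coordinates $\lambda_0(s),\dots,\lambda_n(s)$ (with $\sum_i \lambda_i(s) = 1$) depending continuously on $s$; being continuous on the compact set $\widetilde{\mathcal{S}}_1$, they satisfy $M := \sup_{s} \max_i |\lambda_i(s)| < \infty$. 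The restriction map $\iota_B(f) = f|_B$ of Lemma~\ref{lem:iota_B_continuous} is injective on $\mathcal{A}^c$ (an affine map is determined by its values on an affine basis; cf.\ Lemma~\ref{lem:extends_to_affine}) and continuous for $\tau_{\mathrm{ess}}$, hence also for the finer $\tau_\infty$, as a map into the compact product $\widetilde{\mathcal{S}}_2{}^B = \widetilde{\mathcal{S}}_2{}^{\,n+1}$. Its image is exactly the set of tuples $(y_0,\dots,y_n) \in \widetilde{\mathcal{S}}_2{}^B$ for which $\sum_i \lambda_i(s) y_i \in \widetilde{\mathcal{S}}_2$ holds for every $s \in \widetilde{\mathcal{S}}_1$; since each such condition is closed (the map $(y_i)_i \mapsto \sum_i \lambda_i(s) y_i$ is continuous and $\widetilde{\mathcal{S}}_2$ is closed), the image is an intersection of closed sets, hence closed and therefore compact.

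Finally I would check that $\iota_B$ is a homeomorphism onto its image by producing a continuous inverse. Given a tuple $(y_i)_i$ in the image, the corresponding map is $f(s) = \sum_i \lambda_i(s) y_i$, and for two tuples $(y_i)_i, (y_i')_i$ the estimate $d_\infty(f,f') = \sup_{s} \| \sum_i \lambda_i(s)(y_i - y_i') \| \le M \sum_i \| y_i - y_i' \|$ shows that this inverse correspondence is continuous into $(\mathcal{A}^c,\tau_\infty)$. Hence $(\mathcal{A}^c,\tau_\infty)$ is homeomorphic to a closed subset of the compact space $\widetilde{\mathcal{S}}_2{}^B$, so it is compact, and the argument of the first paragraph concludes. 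I expect the main obstacle to be precisely this uniform control supplied by the finite affine basis: it is the only place where the hypothesis $\dim(\mathcal{S}_i) < \infty$ enters, both in guaranteeing a \emph{finite} index set $B$ (so that the product $\widetilde{\mathcal{S}}_2{}^B$ is compact and the coordinate bound $M$ is finite) and in allowing every affine map to be reconstructed linearly from finitely many values; an alternative but essentially equivalent route would be to invoke the Arzel\`a--Ascoli theorem, the required equicontinuity following from the same uniform Lipschitz bound.
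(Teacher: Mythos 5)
Your proposal is correct, and its outer skeleton coincides with the paper's: both first establish compactness of $\mathcal{A}^c(\widetilde{\mathcal{S}}_1,\widetilde{\mathcal{S}}_2)$ with respect to $d_{\infty}$, and then transfer everything through the identity map, a continuous bijection from that compact space onto the Hausdorff space given by Proposition \ref{prop:dynamics_Hausdorff} — this closing step is verbatim the paper's. Where you genuinely diverge is in how the $d_{\infty}$-compactness is obtained. The paper shows that $\mathcal{A}^c(\widetilde{\mathcal{S}}_1,\widetilde{\mathcal{S}}_2)$ is closed in $C_{d_{\infty}}(\widetilde{\mathcal{S}}_1,\widetilde{\mathcal{S}}_2)$ (via Lemma \ref{lem:A_is_closed}) and equicontinuous — using exactly your barycentric-coordinate bound — and then invokes the Arzel\`a--Ascoli theorem (Theorem \ref{thm:Ascoli-Arzela}). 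You bypass both Arzel\`a--Ascoli and Lemma \ref{lem:A_is_closed}: you identify $\mathcal{A}^c(\widetilde{\mathcal{S}}_1,\widetilde{\mathcal{S}}_2)$ with a closed, hence compact, subset of the finite product $\widetilde{\mathcal{S}}_2{}^{B}$ via the restriction map $\iota_B$, with the explicit Lipschitz inverse $(y_i)_i \mapsto \sum_i \lambda_i(\cdot)\, y_i$; compactness then comes only from compactness of a finite product of compact sets. In effect you prove Corollary \ref{cor:iota_B_homeomorphism} directly and deduce Proposition \ref{prop:A_is_compact} from it, reversing the paper's logical order; there is no circularity, since you rely only on Lemma \ref{lem:iota_B_continuous}, Lemma \ref{lem:extends_to_affine} and Proposition \ref{prop:dynamics_Hausdorff}, all established beforehand. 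What the paper's route buys is reuse of already-proved closedness machinery and an argument shape that would survive in settings without a finite affine basis; what your route buys is elimination of an external compactness theorem and a concrete, quantitative parametrization of the affine maps. Two small points deserve explicit mention in a polished write-up: the identity $f(s) = \sum_i \lambda_i(s) f(b_i)$ requires that affine maps on convex sets preserve affine (not merely convex) combinations, which follows from the extension to $V(\mathcal{S}_1)$ constructed in the proof of Lemma \ref{lem:extends_to_affine} (the paper uses the same fact tacitly in its equicontinuity computation); and in verifying that every tuple $(y_i)_i$ satisfying your closed conditions actually lies in the image of $\iota_B$, one should note $\lambda_i(b_j) = \delta_{i,j}$, so that the reconstructed map restricts on $B$ to the given tuple.
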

The key fact for the proof is the Arzel\`{a}-Ascoli Theorem (see e.g., \cite[Section c-20.6]{top_book}).
The statement of the theorem relevant to our present situation is the following (note that the other condition for \lq\lq boundedness'' is now automatically satisfied, as the metric space $\widetilde{\mathcal{S}}_2$ is compact):
\begin{theorem}
[Arzel\`{a}-Ascoli]
\label{thm:Ascoli-Arzela}
A subset $\mathcal{F}$ of $C_{d_{\infty}}(\widetilde{\mathcal{S}}_1,\widetilde{\mathcal{S}}_2)$ has compact closure if and only if $\mathcal{F}$ is equicontinuous, i.e., for any $s \in \widetilde{\mathcal{S}}_1$ and any $\varepsilon > 0$, there exists an open neighborhood $U$ of $s$ such that for every $f \in \mathcal{F}$, we have $d_{\mathcal{S}_2}(f(s),f(t)) < \varepsilon$ whenever $t \in U$.
\end{theorem}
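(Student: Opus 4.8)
The plan is to translate the topological statement about compact closure into the metric statement about total boundedness, and then prove the two implications separately. The crucial preliminary observation, already recorded in the discussion preceding the theorem, is that $C_{d_\infty}(\widetilde{\mathcal{S}}_1,\widetilde{\mathcal{S}}_2)$ is a complete metric space, since $\widetilde{\mathcal{S}}_2$ is compact (hence complete) and a uniform limit of continuous maps into a complete space is again continuous. In a complete metric space a subset has compact closure if and only if it is totally bounded: the closure is then a closed subset of a complete space, hence complete, and a complete totally bounded metric space is compact. Thus it suffices to show that $\mathcal{F}$ is totally bounded if and only if $\mathcal{F}$ is equicontinuous.

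For the direction where $\overline{\mathcal{F}}$ is compact, I would deduce equicontinuity directly from total boundedness of $\mathcal{F}$. Fix $s \in \widetilde{\mathcal{S}}_1$ and $\varepsilon > 0$, and choose a finite $\varepsilon/3$-net $g_1,\dots,g_n \in \mathcal{F}$ for $\mathcal{F}$ with respect to $d_\infty$. Each $g_i$ is continuous on the compact space $\widetilde{\mathcal{S}}_1$, hence uniformly continuous, so there is an open neighborhood $U_i$ of $s$ with $d_{\mathcal{S}_2}(g_i(s),g_i(t)) < \varepsilon/3$ for all $t \in U_i$. Setting $U = \bigcap_{i=1}^{n} U_i$ and choosing, for arbitrary $f \in \mathcal{F}$, an index $i$ with $d_\infty(f,g_i) < \varepsilon/3$, the triangle inequality yields $d_{\mathcal{S}_2}(f(s),f(t)) \leq d_{\mathcal{S}_2}(f(s),g_i(s)) + d_{\mathcal{S}_2}(g_i(s),g_i(t)) + d_{\mathcal{S}_2}(g_i(t),f(t)) < \varepsilon$ for every $t \in U$. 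This is precisely equicontinuity at $s$, for every $s$.

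For the converse, the first step is to upgrade the pointwise equicontinuity in the statement to uniform equicontinuity, using compactness of $\widetilde{\mathcal{S}}_1$: given $\varepsilon > 0$, for each $x$ pick $\delta_x > 0$ with $d_{\mathcal{S}_1}(x,t) < \delta_x \Rightarrow d_{\mathcal{S}_2}(f(x),f(t)) < \varepsilon/2$ for all $f \in \mathcal{F}$, cover $\widetilde{\mathcal{S}}_1$ by the balls of radius $\delta_x/2$ about each $x$, extract a finite subcover, and let $\delta$ be the minimum of the corresponding $\delta_x/2$; a routine two-step triangle inequality then gives $d_{\mathcal{S}_1}(x,x') < \delta \Rightarrow d_{\mathcal{S}_2}(f(x),f(x')) < \varepsilon$ uniformly in $f$. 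Next I would prove that $\mathcal{F}$ is totally bounded. Fix $\varepsilon > 0$, take $\delta$ as above corresponding to $\varepsilon/4$, choose a finite $\delta$-net $x_1,\dots,x_m$ in $\widetilde{\mathcal{S}}_1$ and a finite $\varepsilon/4$-net $y_1,\dots,y_p$ in $\widetilde{\mathcal{S}}_2$ (both exist by compactness), and consider the finitely many functions $\phi:\{x_1,\dots,x_m\} \to \{y_1,\dots,y_p\}$. For each $\phi$ that is witnessed by some $f \in \mathcal{F}$ with $d_{\mathcal{S}_2}(f(x_j),\phi(x_j)) < \varepsilon/4$ for all $j$, select one such witness $f_\phi$. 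Given any $f \in \mathcal{F}$, approximating each value $f(x_j)$ by a net point in $\widetilde{\mathcal{S}}_2$ produces a witnessed $\phi$, whence $d_{\mathcal{S}_2}(f(x_j),f_\phi(x_j)) < \varepsilon/2$ for every $j$; then for arbitrary $s \in \widetilde{\mathcal{S}}_1$, choosing $x_j$ with $d_{\mathcal{S}_1}(s,x_j) < \delta$ and applying uniform equicontinuity to both $f$ and $f_\phi$ gives $d_{\mathcal{S}_2}(f(s),f_\phi(s)) < \varepsilon$. Hence the finite family $\{f_\phi\}$ is an $\varepsilon$-net for $\mathcal{F}$, so $\mathcal{F}$ is totally bounded and $\overline{\mathcal{F}}$ is compact.

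The main obstacle is the converse direction, and within it the total-boundedness argument: one must simultaneously discretize the domain (via the $\delta$-net controlled by uniform equicontinuity) and the codomain (via the $\varepsilon/4$-net), and the bookkeeping of constants through the several triangle inequalities has to be arranged so that the resulting supremum distance is genuinely controlled by $\varepsilon$. The upgrade from pointwise to uniform equicontinuity is the other place where compactness of $\widetilde{\mathcal{S}}_1$ is essential and must be invoked with care.
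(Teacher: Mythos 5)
Your proposal is correct, but there is nothing in the paper to compare it against line by line: the paper does not prove Theorem \ref{thm:Ascoli-Arzela} at all, it cites it as a known result (\lq\lq see e.g., [Section c-20.6]'' of the Encyclopedia of General Topology) and only remarks that the usual pointwise-boundedness hypothesis is automatic because $\widetilde{\mathcal{S}}_2$ is compact. What you have written is the standard self-contained metric-space proof: reduce \lq\lq compact closure'' to total boundedness using the completeness of $C_{d_\infty}(\widetilde{\mathcal{S}}_1,\widetilde{\mathcal{S}}_2)$ (which the paper records just before the theorem), get equicontinuity from an $\varepsilon/3$-net in one direction, and in the converse direction upgrade to uniform equicontinuity by a covering argument and build a finite net of functions by discretizing both the domain (a $\delta$-net in $\widetilde{\mathcal{S}}_1$) and the codomain (an $\varepsilon/4$-net in $\widetilde{\mathcal{S}}_2$, which is where the paper's remark about compactness of $\widetilde{\mathcal{S}}_2$ enters your argument implicitly). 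All steps check out; the only cosmetic points are that total boundedness gives net centers in $\mathcal{F}$ only up to the usual doubling of radii, and that your pointwise bound $d_{\mathcal{S}_2}(f(s),f_\phi(s))<\varepsilon$ yields $d_\infty(f,f_\phi)\leq\varepsilon$ after taking the supremum, so $\{f_\phi\}$ is an $\varepsilon$-net in the closed-ball sense --- both are harmless since $\varepsilon$ is arbitrary. Compared with the paper's citation, your route buys self-containedness and stays elementary by exploiting exactly the simplifications of the present setting (compact metric domain and codomain), whereas the cited encyclopedia treatment covers the general Ascoli theorem for non-metric spaces, which is more than the paper needs; note also that in your first direction uniform continuity of the $g_i$ is not needed, continuity at the single point $s$ already suffices.
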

\begin{proof}
[Proof of Proposition \ref{prop:A_is_compact}]
First, we prove that the set $\mathcal{F} = \mathcal{A}^c(\widetilde{\mathcal{S}}_1,\widetilde{\mathcal{S}}_2)$ is compact with respect to the metric $d_{\infty}$.
Note that the subset $\mathcal{F}$ is closed in $C_{d_{\infty}}(\widetilde{\mathcal{S}}_1,\widetilde{\mathcal{S}}_2)$, as $\mathcal{F}$ is closed in $C(\widetilde{\mathcal{S}}_1,\widetilde{\mathcal{S}}_2)$ by Lemma \ref{lem:A_is_closed} and the topology on $C_{d_{\infty}}(\widetilde{\mathcal{S}}_1,\widetilde{\mathcal{S}}_2)$ is stronger than (or equal to) that on $C(\widetilde{\mathcal{S}}_1,\widetilde{\mathcal{S}}_2)$.
Therefore, by Theorem \ref{thm:Ascoli-Arzela}, it suffices to show that $\mathcal{F}$ is equicontinuous.
Let $s_0 \in \widetilde{\mathcal{S}}_1$ and $\varepsilon > 0$.
Choose $s_1,\dots,s_n \in \widetilde{\mathcal{S}}_1$ such that $\{s_0,\dots,s_n\}$ is affine independent and its affine hull is $V(\mathcal{S}_1)$.
Then for each $s \in \widetilde{\mathcal{S}}_1$, there exists a unique expression $s - s_0 = \sum_{i = 1}^{n} \lambda_i(s) (s_i - s_0)$ with $\lambda_i(s) \in \mathbb{R}$.
Note that every $\lambda_i$ is a continuous function on $\widetilde{\mathcal{S}}_1$, as any exchange of coordinate systems in a finite-dimensional Euclidean space is a continuous operation.
Now let $U$ be the set of all $s \in \widetilde{\mathcal{S}}_1$ such that $c \sum_i |\lambda_i(s)| < \varepsilon$, where $c = \sup_{t,t' \in \widetilde{\mathcal{S}}_2} d_{\mathcal{S}_2}(t,t') < \infty$ (the finiteness of $c$ follows from the compactness of $\widetilde{\mathcal{S}}_2$).
Then $U$ is open as every $\lambda_i$ is continuous, and $s_0 \in U$ as $\lambda_i(s_0) = 0$ for every $i$.
Moreover, for each $f \in \mathcal{F}$ and $s \in U$, we have
\begin{equation}
f(s) = f \left( \left(1 - \sum_i \lambda_i(s) \right) s_0 + \sum_i \lambda_i(s) s_i \right)
= \left(1 - \sum_i \lambda_i(s) \right) f(s_0) + \sum_i \lambda_i(s) f(s_i) \enspace,
\end{equation}
therefore $f(s) - f(s_0) = \sum_i \lambda_i(s) (f(s_i) - f(s_0))$.
This implies that
\begin{equation}
d_{\mathcal{S}_2}(f(s),f(s_0)) \leq \sum_i |\lambda_i(s)| d_{\mathcal{S}_2}(f(s_i),f(s_0)) \leq c \sum_i |\lambda_i(s)| < \varepsilon \enspace.
\end{equation}
Hence $\mathcal{F}$ is equicontinuous, therefore $\mathcal{F}$ is compact with respect to the metric $d_{\infty}$.

Let $\varphi$ denote the identity map on the set $C(\widetilde{\mathcal{S}}_1,\widetilde{\mathcal{S}}_2)$, which is regarded as the map between topological spaces $\varphi:C_{d_{\infty}}(\widetilde{\mathcal{S}}_1,\widetilde{\mathcal{S}}_2) \to C(\widetilde{\mathcal{S}}_1,\widetilde{\mathcal{S}}_2)$.
This map $\varphi$ is continuous, as the topology of $C_{d_{\infty}}(\widetilde{\mathcal{S}}_1,\widetilde{\mathcal{S}}_2)$ is stronger than (or equal to) the topology of $C(\widetilde{\mathcal{S}}_1,\widetilde{\mathcal{S}}_2)$.
Now $\mathcal{F} = \mathcal{A}^c(\widetilde{\mathcal{S}}_1,\widetilde{\mathcal{S}}_2)$ is Hausdorff with respect to the subspace topology relative to $C(\widetilde{\mathcal{S}}_1,\widetilde{\mathcal{S}}_2)$ (Proposition \ref{prop:dynamics_Hausdorff}), while $\mathcal{F}$ is compact with respect to the metric $d_{\infty}$ as above.
This implies that the restriction $\varphi|_{\mathcal{F}}:\mathcal{F} \to \mathcal{F}$ of $\varphi$ to $\mathcal{F}$ is a homeomorphism as the domain is compact and the range is Hausdorff, therefore the two topologies on $\mathcal{F}$ coincide with each other.
Hence Proposition \ref{prop:A_is_compact} holds.
\end{proof}

As an application of this result, we study the continuous map $\iota_B:\mathcal{A}^c(\widetilde{\mathcal{S}}_1,\widetilde{\mathcal{S}}_2) \to \widetilde{\mathcal{S}}_2{}^B$ discussed in Lemma \ref{lem:iota_B_continuous}.
Let $B$ be a subset of $\mathcal{S}_1$ such that $\mathcal{S}_1 \subset \mathrm{Aff}(B)$, hence $\iota_B$ is injective.
Suppose that both $\mathcal{S}_1$ and $\mathcal{S}_2$ are finite-dimensional.
Then we have the following:
\begin{corollary}
\label{cor:iota_B_homeomorphism}
Under the above setting, the continuous map $\iota_B:\mathcal{A}^c(\widetilde{\mathcal{S}}_1,\widetilde{\mathcal{S}}_2) \to \widetilde{\mathcal{S}}_2{}^B$ is a homeomorphism onto its image.
\end{corollary}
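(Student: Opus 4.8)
The plan is to deduce the claim from the elementary topological fact that a continuous bijection from a compact space onto a Hausdorff space is automatically a homeomorphism; all the ingredients needed for this have already been assembled. First I would record that $\iota_B$ is continuous by Lemma \ref{lem:iota_B_continuous}, and that it is injective: the hypothesis $\mathcal{S}_1 \subset \mathrm{Aff}(B)$ forces any two maps in $\mathcal{A}^c(\widetilde{\mathcal{S}}_1,\widetilde{\mathcal{S}}_2)$ that agree on $B$ to agree on $\mathrm{Aff}(B) \supset \mathcal{S}_1$, hence (by density of $\mathcal{S}_1$ in $\widetilde{\mathcal{S}}_1$, continuity, and the Hausdorff property of $\widetilde{\mathcal{S}}_2$) to coincide on all of $\widetilde{\mathcal{S}}_1$; this is precisely the injectivity already noted via the uniqueness part of Lemma \ref{lem:extends_to_affine}. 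Thus $\iota_B$ is a continuous bijection of $\mathcal{A}^c(\widetilde{\mathcal{S}}_1,\widetilde{\mathcal{S}}_2)$ onto its image $\iota_B(\mathcal{A}^c(\widetilde{\mathcal{S}}_1,\widetilde{\mathcal{S}}_2)) \subset \widetilde{\mathcal{S}}_2{}^B$.

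Next I would verify the two standing hypotheses of that topological fact. Since both $\mathcal{S}_1$ and $\mathcal{S}_2$ are finite-dimensional, Proposition \ref{prop:A_is_compact} guarantees that the domain $\mathcal{A}^c(\widetilde{\mathcal{S}}_1,\widetilde{\mathcal{S}}_2)$ is compact. For the target, $\widetilde{\mathcal{S}}_2{}^B$ is by construction the direct product of copies of the Hausdorff space $\widetilde{\mathcal{S}}_2$ indexed by $b \in B$, and a product of Hausdorff spaces is Hausdorff; consequently its subspace $\iota_B(\mathcal{A}^c(\widetilde{\mathcal{S}}_1,\widetilde{\mathcal{S}}_2))$ is Hausdorff as well.

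Finally I would conclude by applying the theorem. Viewing $\iota_B$ as a map onto its image, it is a continuous bijection from a compact space to a Hausdorff space, and hence a closed map: the image of any closed (thus compact) subset of the domain is compact, therefore closed in the Hausdorff image. A continuous closed bijection is a homeomorphism, so $\iota_B$ is a homeomorphism onto its image, as desired.

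I do not expect a genuine obstacle in this argument, because the substantive content has been front-loaded into Proposition \ref{prop:A_is_compact} (the compactness of the domain, which itself rests on the Arzel\`{a}--Ascoli argument of Theorem \ref{thm:Ascoli-Arzela}). The only points that require care are the bookkeeping of injectivity from the condition $\mathcal{S}_1 \subset \mathrm{Aff}(B)$ and the observation that the product target $\widetilde{\mathcal{S}}_2{}^B$ is Hausdorff; once these are in place, the compact-to-Hausdorff principle finishes the proof immediately.
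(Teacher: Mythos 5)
Your proposal is correct and follows essentially the same route as the paper: continuity and injectivity of $\iota_B$ from the standing setup, compactness of $\mathcal{A}^c(\widetilde{\mathcal{S}}_1,\widetilde{\mathcal{S}}_2)$ from Proposition \ref{prop:A_is_compact}, Hausdorffness of the product $\widetilde{\mathcal{S}}_2{}^B$, and the standard fact that a continuous injection from a compact space into a Hausdorff space is a homeomorphism onto its image. You merely spell out the details (the injectivity bookkeeping and the closed-map argument) that the paper leaves implicit.
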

\begin{proof}
Now $\mathcal{A}^c(\widetilde{\mathcal{S}}_1,\widetilde{\mathcal{S}}_2)$ is compact by Proposition \ref{prop:A_is_compact}, while $\widetilde{\mathcal{S}}_2{}^B$ is Hausdorff.
Hence the continuous injection $\iota_B$ is a homeomorphism onto its image.
\end{proof}
In particular, when we choose a {\em finite} subset $B$ as above, the space $\widetilde{\mathcal{S}}_2{}^B$ is a topological subspace of a Euclidean space (of finite dimension $\dim(\mathcal{S}_2)|B|$), and its Euclidean metric transferred to the set $\mathcal{A}^c(\widetilde{\mathcal{S}}_1,\widetilde{\mathcal{S}}_2)$ via the above map $\iota_B$ and the metric $d_{\infty}$ determine the same topology on $\mathcal{A}^c(\widetilde{\mathcal{S}}_1,\widetilde{\mathcal{S}}_2)$, hence on $\widetilde{\mathcal{A}}(\widetilde{\mathcal{S}}_1,\widetilde{\mathcal{S}}_2)$.
\begin{remark}
\label{rem:injective_set_not_closed}
Here we give a remark on Lemma \ref{lem:surjective_set_closed}.
This lemma will fail when we replace \lq\lq surjective'' with \lq\lq injective'' in the statement.
Indeed, in the case $\mathcal{S}_1 = \mathcal{S}_2 = [0,1]$, we define $f_i(x) = x/i$ ($x \in [0,1]$) for integers $i \geq 1$.
Then each $f_i$ is affine and injective, while their limit $f = 0$ with respect to the metric $d_{\infty}$ is not injective.
\end{remark}

\appendix
\section*{Appendix}

\section{Fixed point of affine automorphisms}
\label{sec:appendix_maximally_mixed_state_symmetric_GPT}

In this appendix, we give a proof of the fact (mentioned in Section \ref{subsec:characterization_preliminary}) that there exists a unique fixed point of the $\mathrm{Aut}(\mathcal{S})$-action on $\mathcal{S}$ under the assumption that the convex set $\mathcal{S}$ is compact and finite-dimensional, and $\mathrm{Aut}(\mathcal{S})$ acts transitively on $\mathcal{S}_{\mathrm{ext}}$.
First, we note that the uniqueness of a fixed point is a consequence of Lemma \ref{lem:maximally_mixed_is_interior}.
Indeed, if two distinct fixed points $s_1$ and $s_2$ exist, then the $\mathrm{Aut}(\mathcal{S})$-action fixes the line through $s_1$ and $s_2$, which contains a boundary point of $\mathcal{S}$.
This contradicts Lemma \ref{lem:maximally_mixed_is_interior}.

From now, we prove the existence of a fixed point.
Take a left-invariant integral $\varphi \mapsto \int\!\varphi(g)d g$ on the compact group $G = \mathrm{Aut}(\mathcal{S})$, which is a linear functional on the set of real-valued continuous functions $\varphi:G \to \mathbb{R}$ satisfying the following conditions:
\begin{enumerate}
\item \label{item:invariant_integral_1}
If $\varphi$ is non-negative, then $\int\!\varphi(g)d g \geq 0$.
\item \label{item:invariant_integral_2}
If $\varphi = 1$ constantly, then $\int\!1\,d g = \int\!\varphi(g)d g = 1$.
\item \label{item:invariant_integral_3}
If $h \in G$, then $\int\!\varphi(h g)d g = \int\!\varphi(g)d g$.
\end{enumerate}
Let $\varphi_i:V(\mathcal{S}) \to \mathbb{R}$ denote the $i$-th coordinate function on the Euclidean space $V(\mathcal{S}) = \mathrm{Aff}(\mathcal{S})$.
Fix a point $s_0 \in \mathcal{S}$.
Then the map $\widetilde{\varphi}_i:G \to \mathbb{R}$ defined by $\widetilde{\varphi}_i(g) = \varphi_i(g(s_0))$ ($g \in G$) is continuous by continuity of the $G$-action on $\mathcal{S}$.
Put $v_i = \int\!\widetilde{\varphi}_i(g)d g \in \mathbb{R}$, and let $v \in V(\mathcal{S})$ be the unique element such that $\varphi_i(v) = v_i$ for every $i$.
We prove that $v \in \mathcal{S}$ and it is a fixed point of $G$-action.

As $V(\mathcal{S}) = \mathrm{Aff}(\mathcal{S})$, each $f \in \mathrm{Aut}(\mathcal{S})$ extends to a bijective affine transformation on $V(\mathcal{S})$, which is also denoted by $f$ for simplicity.
First we show that $f(v) = v$ for every $f \in \mathrm{Aut}(\mathcal{S})$.
Note that each $f$ is represented by a matrix $M = (M_{i,j})_{i,j}$ and a vector $b = (b_i)_i$ in such a way that for each $x \in V(\mathcal{S})$, we have $\varphi_i(f(x)) = \sum_{j} M_{i,j} \varphi_j(x) + b_i$ for every $i$.
Now we have
\begin{equation}
\varphi_i(f(v))
= \sum_{j} M_{i,j} v_j + b_i
= \sum_{j} M_{i,j} \int\!\widetilde{\varphi}_j(g)d g + b_i
= \sum_{j} M_{i,j} \int\!\widetilde{\varphi}_j(g)d g + b_i \int\!1\,d g
\end{equation}
where we used the property \ref{item:invariant_integral_2} above in the last equality.
By the linearity, it follows that
\begin{equation}
\begin{split}
\varphi_i(f(v))
= \int\! \left( \sum_{j} M_{i,j} \widetilde{\varphi}_j(g) + b_i \right)\!d g
&= \int\! \left( \sum_{j} M_{i,j} \varphi_j(g(s_0)) + b_i \right)\!d g \\
&= \int\! \varphi_i(f(g(s_0)))d g \\
&= \int\! \widetilde{\varphi}_i(f \cdot g)d g
= \int\! \widetilde{\varphi}_i(g)d g = v_i = \varphi_i(v)
\end{split}
\end{equation}
where we used the property \ref{item:invariant_integral_3} above in the last row.
Hence we have $\varphi_i(f(v)) = \varphi_i(v)$ for every $i$, therefore $f(v) = v$ as desired.

Hence it suffices to show that $v \in \mathcal{S}$.
Assume for contrary that $v \not\in \mathcal{S}$.
Then, as $\mathcal{S}$ is compact and convex, there exists an affine functional $f$ on $V(\mathcal{S})$ that is non-negative on $\mathcal{S}$ and negative at $v$.
Choose a vector $(m_i)_i$ and a value $b$ such that $f(x) = \sum_{j} m_j \varphi_j(x) + b$ for every $x \in V$.
Then a similar argument as above implies that
\begin{equation}
f(v)
= \sum_{j} m_j v_j + b
= \int\! \left( \sum_{j} m_j \varphi_j(g(s_0)) + b \right)d g
= \int\! f(g(s_0))d g \enspace.
\end{equation}
Now we have $f(g(s_0)) \geq 0$ for every $g \in G$ by the choice of $f$ (note that $g(s_0) \in \mathcal{S}$), therefore the right-hand side is non-negative by the property \ref{item:invariant_integral_1} above.
On the other hand, we have $f(v) < 0$ by the choice of $f$ again.
This is a contradiction, therefore we have $f(v) \in \mathcal{S}$ as desired.
Hence the claim in this appendix is concluded.

\section{Proof of Proposition \ref{prop:isometry}}
\label{sec:appendix_proof_isometry}

In this appendix, we give a proof of Proposition \ref{prop:isometry} for the sake of completeness.

\begin{proof}
[Proof of Proposition \ref{prop:isometry}]
Put $G = \mathrm{Aut}(\mathcal{S})$ which is compact.
Then it is well-known from representation theory of compact groups that, given an inner product $\langle \cdot,\cdot \rangle$ on $V = V(\mathcal{S})$, the map $\langle \cdot,\cdot \rangle_G:V \times V \to \mathbb{R}$ defined by $\langle v_1,v_2 \rangle_G = \int_{G} \langle g \cdot v_1, g \cdot v_2 \rangle\,dg$ (where the right-hand side is the Haar integral) is a $G$-invariant inner product on $V$.
Let $e_1,e_2,\dots,e_n \in V$ and $f_1,f_2,\dots,f_n \in V$ be orthonormal bases of $V$ with respect to the inner products $\langle \cdot,\cdot \rangle$ and $\langle \cdot,\cdot \rangle_G$, respectively.
Now we define a bijective linear transformation $\varphi$ on $V$ by $\varphi(f_i) = e_i$ for each $1 \leq i \leq n$.
Put $\mathcal{S}' = \varphi(\mathcal{S})$.
We show that this $\varphi$ satisfies the conditions specified in the statement.
Only the non-trivial part of the claim is that each member of $\mathrm{Aut}(\mathcal{S}')$ is an orthogonal transformation with respect to $\langle \cdot,\cdot \rangle$.
Note that $\mathrm{Aut}(\mathcal{S}') = \{\varphi g \varphi^{-1} \mid g \in G\}$.
Moreover, we have $\langle \varphi(v),\varphi(w) \rangle = \langle v,w \rangle_G$ by the choice of $\varphi$, as $\langle \varphi(f_i),\varphi(f_j) \rangle = \langle e_i,e_j \rangle = \delta_{i,j} = \langle f_i,f_j \rangle_G$ for the basis elements.
Now for each $g \in G$ and $v,w \in V$, we have
\begin{equation}
\langle \varphi g \varphi^{-1}(v), \varphi g \varphi^{-1}(w) \rangle
= \langle g \varphi^{-1}(v), g \varphi^{-1}(w) \rangle_G
= \langle \varphi^{-1}(v), \varphi^{-1}(w) \rangle_G
= \langle v,w \rangle
\end{equation}
where we used the $G$-invariance of $\langle \cdot,\cdot \rangle_G$ in the second equality.
Hence the claim holds.
\end{proof}

\section{On the generality of separated spaces}
\label{sec:appendix_justification_separated_condition}

In this appendix, for the sake of completeness, we briefly describe an argument to show that we may assume without loss of generality that the convex set $\mathcal{S}$ is separated, as mentioned at the beginning of Section \ref{sec:preliminary}.

For an arbitrary convex set $\mathcal{S}$, we define an equivalence relation on $\mathcal{S}$ such that two elements $s_1,s_2 \in \mathcal{S}$ are equivalent if and only if $f(s_1) = f(s_2)$ for every affine functional $f:\mathcal{S} \to \mathbb{R}$ with bounded image $f(\mathcal{S})$.
Then the corresponding quotient set becomes a separated convex set, with essentially the same set of the bounded affine functionals as the case of $\mathcal{S}$.
Therefore, by considering the quotient set instead of $\mathcal{S}$ itself, it suffices to study convex sets which are separated.
Hence the claim of this appendix follows.

\paragraph*{Acknowledgements}
A part of this paper was presented at 14th Workshop on Quantum Information Processing (QIP 2011), Singapore, January 10--14, 2011.
This work was partially supported by Grant-in-Aid for Young Scientists (B) (No.20700017 and No.22740079), The Ministry of Education, Culture, Sports, Science and Technology (MEXT), Japan.
Moreover, the authors thank the anonymous referees in several submissions of the paper for their precious comments.

\end{document}